\newtheorem{cor}{Corollary}[section]
\newtheorem{theorem}[cor]{Theorem}
\newtheorem{prop}[cor]{Proposition}
\newtheorem{lemma}[cor]{Lemma}
\theoremstyle{definition}
\newtheorem{defi}[cor]{Definition}
\newenvironment{manualtheorem}[1]{%
  \manualtheoreminner
}{\endmanualtheoreminner}
\newtheorem{remark}[cor]{Remark}
\newtheorem{example}[cor]{Example}
\newenvironment{manualcorollary}[1]{%
  \manualcorollaryinner
}{\endmanualcorollaryinner}
\newcommand{\R}{\mathbb{R}}
\newcommand{\h}{\mathbb{H}}
\newcommand{\p}{\mathbf{P}}
\newcommand{\SL}{\mathrm{SL}}
\newcommand{\vl}{|}
\newcommand{\Ker}{\mathrm{Ker}}
\newcommand{\trace}{\mathrm{tr}}
\newcommand{\II}{\mathrm{I}\mathrm{I}}
\newcommand{\GL}{\mathrm{GL}}
\newcommand{\SO}{\mathrm{SO}}
\newcommand{\Hom}{\mathrm{Hom}}
\newcommand{\Span}{\mathrm{Span}}
\newcommand{\Id}{\mathrm{Id}}
\newcommand{\AdS}{\mathrm{AdS}}
\newcommand{\g}{\mathbf{g}}
\newcommand{\ome}{\boldsymbol{\omega}}
\DeclareMathAlphabet{\mathpzc}{OT1}{pzc}{m}{it}
\title[superconformal surfaces and cyclic Higgs bundles]{superconformal surfaces in para-complex hyperbolic space and cyclic Higgs bundles}
\begin{document}

\setcounter{secnumdepth}{3}
\setcounter{tocdepth}{2}

\title[]{Equiaffine immersions and pseudo-Riemannian space forms} 

\author[Nicholas Rungi]{Nicholas Rungi}
\address{NR: Department of Mathematics, University of Turin, Italy.} \email{nicholas.rungi@unito.it}

\date{\today}

\begin{abstract}
We introduce an explicit construction that produces immersions into the pseudosphere 
$\mathbb{S}^{n,n+1}$ and the pseudohyperbolic space $\mathbb{H}^{n+1,n}$ starting from equiaffine 
immersions in $\mathbb{R}^{n+1}$, and conversely. We describe how these immersions interact with a 
para-Sasaki metric defined on $\mathbb{H}^{n+1,n}$ via a principal $\mathbb{R}$-bundle structure over a para-Kähler manifold $\mathbb H_\tau^n$, called the para-complex hyperbolic space. In the case where the immersion in $\mathbb{R}^{n+1}$ is an $n$-dimensional hyperbolic affine 
sphere, we obtain spacelike maximal immersions in $\mathbb{H}^{n+1,n}$ that satisfy a 
transversality condition with respect to the principal $\mathbb{R}$-bundle structure. As a first application, given a strictly convex subset $\Omega \subset \mathbb{RP}^n$, we define a boundary set $\overline{\Lambda}_\Omega$ in the partial flag variety of lines and hyperplanes in $\R^{n+1}$, and prove the existence and uniqueness of a spacelike, Lagrangian, maximal $n$-submanifold in $\mathbb H^n_\tau$ with boundary $\overline{\Lambda}_\Omega$. We also discuss its implications in the case of $\mathbb H^{n+1,n}$. As a second application, we show that the Blaschke lift of the hyperbolic affine sphere, introduced 
by Labourie for $n=2$, into the symmetric space of $\mathrm{SL}(n+1,\mathbb{R})$ is a harmonic map.

\end{abstract}

\maketitle

\tableofcontents

\section{Introduction}
\noindent Given a smooth oriented $n$-dimensional manifold $M$, affine differential geometry (\cite{nomizu1994affine, li2015global}) is concerned with the study of immersions $f:M\to\mathbb R^{n+1}$ and with their properties that remain invariant under unimodular affine transformations. For this reason, it differs from Euclidean geometry in that $\mathbb R^{n+1}$ is endowed only with its affine structure, rather than with its flat metric. On the other hand, pseudo-Euclidean geometry of neutral signature deals with the study of the space $\mathbb R^{2n+2}$ endowed with a symmetric bilinear form of signature $(n+1,n+1)$, which we denote by $\mathbb R^{n+1,n+1}$. The group of transformations preserving both the orientation of the space and the non-degenerate bilinear form is isomorphic to $\mathrm{SO}(n+1,n+1)$. This Lie group acts transitively on two quadrics contained in $\mathbb R^{n+1,n+1}$, namely the \emph{pseudosphere} $\mathbb S^{n,n+1}$ and the \emph{pseudohyperbolic space} $\mathbb H^{n+1,n}$, obtained respectively as the sets of unit and negative-unit vectors. \\ \\
The aim of this work is to establish an explicit connection between affine differential geometry and pseudo-Euclidean geometry of neutral signature and to study several aspects of it. More precisely, starting from an equiaffine immersion $f:M\to\mathbb R^{n+1}$, we define two explicit maps $\sigma^+:M\to\mathbb S^{n,n+1}$ and $\sigma^-:M\to\mathbb H^{n+1,n}$, which are related to each other by an anti-isometry of the ambient space $\R^{n+1,n+1}$. Several aspects of this construction will be discussed: the interaction of the immersions $\sigma^\pm$ with a distinguished metric structure, the \emph{para-Sasaki metric}, defined on the pseudosphere and the pseudohyperbolic space; how to reverse the construction whenever $M$ is also simply connected; and finally the correspondence between properties arising on the affine and pseudo-Euclidean sides. For instance, the map $\sigma^-$ is maximal in $\mathbb H^{n+1,n}$ if and only if the conormal map of $f$ is a proper affine sphere. As a first application, we study the existence (and uniqueness) problem for $n$-dimensional spacelike, maximal and Lagrangian submanifolds in the para-complex hyperbolic space $\mathbb H^n_\tau$ with prescribed boundary at infinity. This space arises as the base of a principal $\R$-bundle $\pi_-: \mathbb H^{n+1,n} \to \mathbb H^n_\tau$, thereby enabling the study of lifts of these submanifolds to the pseudohyperbolic space, showing how their boundary relates to natural subspaces that can be defined in $\partial_\infty \mathbb H^{n+1,n}$. Another interesting feature of this construction is that the dimension of the maximal submanifold is not equal to the dimension of the positive signature part of \(\mathbb{H}^{n+1,n}\). The immersions we obtain differ from those studied recently in the literature (\cite{bonsante2010maximal, andersson2012cosmological, CTT, labourie2024plateau, seppi2023complete}) concerning \(p\)-dimensional spacelike maximal submanifolds in \(\mathbb{H}^{p,q}\), or constant mean curvature hypersurfaces in $\mathbb{H}^{p,1}$ (\cite{trebeschi2024constant}). It would be interesting to investigate the equivariant counterpart of this construction 
and to understand whether it is related to the Anosov property for representations 
of hyperbolic groups into \(\mathrm{SO}(n+1,n+1)\), as studied for instance in \cite{CTT,beyrer2023mathbb,seppi2023complete}.
As a second application, in the context of hyperbolic affine spheres, we generalize the \emph{Blaschke lift} introduced by Labourie for $n=2$ (\cite{Labourie_cubic}), defining a map
$
\mathcal G_f:M\to\mathbb X_{n+1},
$
into the symmetric space of $\SL(n+1,\R)$, which turns out to be harmonic for all $n\ge2$.

\subsection{The correspondence}\label{sec:the_correspondence_introduction}
We now turn to the explanation of the correspondence between the immersions arising in seemingly different contexts. As mentioned earlier, we are interested in affine immersions $f : M \to \mathbb{R}^{n+1}$, called \emph{equiaffine}, where $M$ is a smooth $n$–dimensional manifold, endowed with a vector field $\xi : M \to \mathbb{R}^{n+1}$ that is pointwise transverse to $f_*(T_pM)$ and satisfies the following structure equations:
$$
\begin{cases}
D_X f_*Y = f_*\big(\nabla_X Y\big) + h(X,Y)\xi \\
D_X \xi = -f_*\big(S(X)\big)
\end{cases}
$$ where $D$ denotes the flat connection on $\mathbb{R}^{n+1}$, while $\nabla$, $h$, and $S$ are respectively a torsion–free connection on $TM$, a symmetric bilinear form on $TM$, and an endomorphism of $TM$. In the following we will denote the equiaffine immersion by the pair $(f,\xi)$, and we will say that it is non-degenerate if and only if the bilinear form $h$ is non-degenerate. Whenever such an immersion exists, one can consider its dual $\nu : M \to (\mathbb{R}^{n+1})^*$, defined by requiring that $\nu(\xi) = 1$ and that its kernel coincides pointwise with $f_*(T_pM)$. Similarly, one can derive structure equations for $\nu$ and show that it is always a centroaffine immersion, that is, a special case where the transversal vector field can be chosen to be a multiple of the position vector. The affine connection and the symmetric bilinear form on $TM$ induced by $\nu$ will be denoted by $\bar\nabla$ and $\bar h$.
 This construction allows us to define two maps $\sigma^\pm : M \to \mathbb{R}^{n+1} \oplus (\mathbb{R}^{n+1})^*$ by
$
\sigma^\pm(p) := (\pm \xi_p, \nu_p).
$
Introducing the bilinear form
$$
\hat\g\big((v,\varphi),(w,\psi)\big) := \tfrac{1}{2}\big(\varphi(w) + \psi(v)\big)
$$
on the space $V:=\mathbb{R}^{n+1} \oplus (\mathbb{R}^{n+1})^*$, we obtain an isometric copy of the standard model pseudo–Euclidean space $\mathbb{R}^{n+1,n+1}$ (after identifying $\mathbb{R}^{n+1}$ with its dual via the standard inner product). By the very definition of the conormal map $\nu$, it follows that $\sigma^-$ (respectively $\sigma^+$) is contained in the quadric consisting of vectors of norm $-1$ (respectively $+1$) in $\mathbb{R}^{n+1,n+1}$, namely in the so–called pseudohyperbolic space $\mathbb{H}^{n+1,n}$ of signature $(n+1,n)$ (respectively in the pseudosphere $\mathbb{S}^{n,n+1}$ of signature $(n,n+1)$). In what follows, we shall discuss only the results concerning one of the two maps $\sigma^\pm$, since one is determined from the other via the anti–isometry of $V$ (with respect to $\hat\g$) that changes the sign of the component along the first copy of $\mathbb{R}^{n+1}$.\\ \\ As already mentioned, the Lie group $\mathrm{SO}_0(V,\hat\g) \cong \mathrm{SO}_0(n+1,n+1)$ acts transitively on $\mathbb{H}^{n+1,n}$. In particular, there exists a copy of $\mathbb{R} \cong \mathrm{SO}_0(1,1) < \mathrm{SO}_0(n+1,n+1)$ acting naturally on such space freely and properly, giving rise to a projection map
$
\pi_- : \mathbb{H}^{n+1,n} \to \mathbb{H}^{n+1,n}/\mathbb{R},
$
which inherits the structure of a principal $\mathbb{R}$–bundle. The quotient manifold $\mathbb{H}^{n+1,n}/\mathbb{R}$ can in fact be identified with a para–Kähler manifold of real dimension $2n$, known as the \emph{para–complex hyperbolic space} (Proposition \ref{prop:diffeo_hyperbolic_incidence}) and denoted by $\mathbb{H}_\tau^n$ (\cite[\S 8.4]{trettel2019families},\cite{RT_bicomplex}). The para–Kähler metric on the base can then be suitably pulled back to $\mathbb{H}^{n+1,n}$ (Appenidx \ref{appendix_B}), endowing this space with a \emph{para–Sasaki metric}. In particular, the para-complex structure $\p$ of $\mathbb H_\tau^n$ induces an endomorphism $\phi$ on the tangent bundle of $\mathbb H^{n+1,n}$, via the map $\pi_-$.
\begin{manualtheorem}A[{Theorem \ref{prop:immersion_sigma} and \ref{prop:immersion_in_H_tau}}]\label{thm:A}
Given a non-degenerate equiaffine immersion $(f,\xi)$ with $\det(S)\neq 0$, the map $\sigma^-=(-\xi,\nu):M\to\mathbb H^{n,n+1}$ is a horizontal and $\phi$-anti-invariant immersion whose induced metric coincides with the affine metric of the dual immersion $\nu$. Moreover, it projects to a non-degenerate Lagrangian immersion $\bar\sigma=\pi_-\circ\sigma^-$ into $\h_\tau^n$.
\end{manualtheorem} 
\noindent The \emph{horizontal} and $\phi$\emph{-anti-invariant} properties for $\sigma^-$ are defined using the principal $\mathbb{R}$-bundle structure of the pseudohyperbolic space, and therefore the para-Sasaki metric. Moreover, since $\sigma^-$ is related to $\sigma^+$ by an anti–isometry of the ambient space, the same result holds for $\sigma^+$, except that the induced metric changes by a sign. Using the structure equations of the immersions $(f,\xi)$ and $\nu$, one can explicitly compute the mean curvature $\mathbf H_{\sigma^-}$ of $\sigma^-$ in terms of an affine invariant of the conormal map, namely the Pick tensor $\bar C(\cdot,\cdot,\cdot):= \big(\bar\nabla_{\cdot}\bar h\big)(\cdot,\cdot)$. More precisely, one shows that $\mathbf H_{\sigma^-}$ coincides, up to a non-zero multiplicative constant, with the trace of $\bar C$ taken with respect to $\bar h$.
\begin{manualcorollary}B[{Corollary \ref{cor:sigma_maximal_iff_proper_affine_sphere}}]\label{thm:B}
Given $(f,\xi)$ as above, the following statements are equivalent: \begin{enumerate}
    \item[(i)] $\sigma^-:M\to\mathbb H^{n+1,n}$ is maximal, i.e. $\mathbf H_{\sigma^-}\equiv 0$; \item[(ii)] $\bar\sigma=\pi_-\circ\sigma^-:M\to\h_\tau^n$ is maximal; \item[(iii)] the dual immersion $\nu:M\to(\R^{n+1})^*$ is a proper affine sphere. \end{enumerate}
\end{manualcorollary}
\noindent As for the inverse problem, starting from a non-degenerate and Lagrangian immersion $\bar\sigma:M\to\mathbb H^n_\tau$, with $M$ simply connected, we aim to find an equiaffine immersion $(f,\xi)$ such that $\sigma^- = (-\xi,\nu):M\to\mathbb H^{n+1,n}$ is horizontal, $\phi$–anti-invariant, satisfies $\bar\sigma = \pi_-\circ\sigma^-$, and where $\nu$ is precisely the conormal map to $f$. The results in \cite{hildebrand2011half, hildebrand2011cross} and \cite{dillen1990conjugate} allow one to perform this inversion of the construction with an implicit approach. In our case, we adopt an explicit and geometric method. Indeed, given a non-degenerate and Lagrangian immersion $\bar\sigma:M\to\mathbb H^n_\tau$, with $M$ simply connected, one can always find a lift $\sigma^-=(-\xi,\nu)$ to $\mathbb H^{n+1,n}$. Using the principal $\mathbb R$-bundle structure, we know that any other lift can be written as $\sigma^-_\mu = (-f^\mu,\nu^\mu)$, where $\mu$ is a smooth function, and $f^\mu=e^\mu\xi$ (resp. $\nu^\mu=e^{-\mu}\nu$) is a centroaffine immersion in $\R^{n+1}$ (resp. in $(\R^{n+1})^*$).
\begin{manualtheorem}C[{Theorem \ref{thm:inverse_problem_lift}}]\label{thm:C}
Let $M$ be simply-connected and $\bar\sigma:M\to\mathbb H^n_\tau$ be a non-degenerate and Lagrangian immersion, then \begin{itemize}
    \item[(i)] each lift $\sigma_\mu^- : M \to \mathbb{H}^{n+1,n}$ is $\phi$–anti-invariant, and the induced metric coincides with that induced by $\bar{\sigma}$;
\item[(ii)] there exists a unique function $\hat{\mu}$, up to an additive constant, such that the lift $\sigma_{\hat{\mu}}^-$ is horizontal;
\item[(iii)] the centroaffine immersions $f^{\hat{\mu}}$ and $\nu^{\hat{\mu}}$ induced by $\sigma_{\hat{\mu}}^-$ are dual to each other and unique up to homothety.
\end{itemize}
\end{manualtheorem}
\noindent By performing computations similar to those carried out previously, we obtain the following corollary, which completes the inverse construction.
\begin{manualcorollary}D[{Corollary \ref{cor:inverse_sigma_maximal_iff_proper_affine_sphere}}]\label{cor:D}
In the setting above, the following are equivalent: \begin{itemize}
    \item[(i)] $\bar\sigma$ is maximal;
    \item[(ii)] the horizontal lift $\sigma^-_{\hat\mu}:M\to\mathbb H^{n+1,n}$ is maximal;
    \item[(iii)] the affine immersion $f^{\hat\mu}$ and its dual $\nu^{\hat\mu}$ are proper affine spheres.
\end{itemize}
\end{manualcorollary}
\noindent In Section \ref{sec:examples}, we provide a series of explicit examples showing that our correspondence recovers several constructions already known in the literature, while at the same time generalizing them. \\ \\ In the two papers \cite{hildebrand2011cross, hildebrand2011half}, Hildebrand studied a relation between Lagrangian immersions into an isometric model of $\mathbb H_\tau^n$ and affine immersions in \(\mathbb R^{n+1}\), using an approach different from our explicit construction. See also \cite{dorfmeister2024half}. \\ \\ In the work \cite{LucVrancken2002} Vrancken studies an implicit correspondence between immersions in pseudo-Riemannian space forms and centroaffine immersions in $\mathbb{R}^{n+1}$ which, again, differs from our explicit approach using the conormal map and exhibiting the interaction of the immersion with the para-Sasaki structure. \\ \\
A similar correspondence, though in a different context, was investigated by El Emam–Seppi (\cite{el2022gauss}). Starting from hypersurfaces in hyperbolic space $\mathbb H^{n+1}$, they constructed a Gauss map into the space $\mathcal G(\mathbb H^{n+1})$ of oriented geodesics in $\mathbb H^{n+1}$, which carries a para-Kähler metric. Among the various problems they examined was precisely that of determining suitable conditions under which this Gauss map lifts to an immersion into the unit tangent bundle $T^1\mathbb H^{n+1}$, which is a principal $\mathbb R$-bundle over $\mathcal G(\mathbb H^{n+1})$.

\subsection{The boundary at infinity}
The projective model of the pseudohyperbolic space, defined as
$\hat{\mathbb H}^{n+1,n} := \mathbb H^{n+1,n} /\sim$, where $(x,y) \sim (-x,-y)$,
is contained in $\mathbb R\mathbb P^{2n+1}$ and admits a natural \emph{boundary at infinity}, as in the Riemannian setting. Using the bilinear form $\hat\g$ introduced in Section \ref{sec:the_correspondence_introduction} and defined on $V:=\R^{n+1}\oplus(\R^{n+1})^*$, the boundary at infinity of the pseudohyperbolic space is given by
$
\partial_\infty \hat{\mathbb H}^{n+1,n} = \big\{ [v,\varphi] \in \mathbb P(V) \mid \varphi(v)=0 \big\},
$
that is, it consists of the projective classes of pairs \((v,\varphi)\), where 
\(v \in \mathbb R^{n+1}\) and \(\varphi\) is a linear functional whose associated 
hyperplane contains \(v\). It is sometimes referred to as the \emph{Einstein Universe} of signature $(n,n)$. The same construction can be applied to the para-complex hyperbolic space. We denote by $\hat{\mathbb H}^n_\tau$ its projective model, so that its boundary at infinity is described as
$
\partial_\infty \hat{\mathbb{H}}_\tau^n = \{ ([v],[\varphi]) \in \mathbb{RP}^n \times (\mathbb{RP}^n)^* \mid \varphi(v) = 0 \},
$
that is, it consists of all pairs of lines and hyperplanes in $\R^{n+1}$ with the line contained in the hyperplane. This space is exactly the partial flag variety parametrizing lines and hyperplanes in $\R^{n+1}$, which we denote by $\mathcal{F}_{1,n}(\R^{n+1})$.
 \\ \\ 
The first application of the results from the previous section concerns an existence (and uniqueness) problem for maximal spacelike Lagrangian \(n\)-submanifolds in 
\(\hat{\mathbb H}^n_\tau\) with prescribed boundary data. More precisely, given a strictly convex subset \(\Omega \subset \R\mathbb P^n\), its 
lift to \(\mathbb R^{n+1}\setminus\{0\}\) determines the full cone
\(
\mathcal C(\Omega)=\mathcal C_+(\Omega)\sqcup \mathcal C_-(\Omega),
\)
whose two components \(\mathcal C_\pm(\Omega)\) are sharp convex cones, meaning 
that their interiors contain no lines. Starting from such \(\Omega \subset \R\mathbb P^n\), we define the following subset of 
the Einstein universe:
\[
\Lambda_\Omega := \{[v,\varphi_v] \in \mathbb P(V) \mid 
v \in \partial\mathcal C(\Omega),\ [\varphi_v] \text{ is the unique supporting hyperplane at } [v]\},
\]
where $[v],[\varphi_v]$ represent the projective classes in $\mathbb{RP}^n,(\mathbb{RP}^n)^*$, respectively. We will refer to subsets of the form 
\(\Lambda_\Omega \subset \partial_\infty \hat{\mathbb H}^{\,n+1,n}\) as 
\emph{hyperplane boundary sets}. The principal $\R$-bundle structure $\hat\pi_-:\hat{\mathbb H}^{n+1,n}\to\hat{\mathbb H}^n_\tau$ induces a projection at the level of boundary at infinity $p:\mathrm{Ein}_0^{n,n}\subset\partial_\infty\hat{\mathbb H}^{n+1,n}\to\mathcal{F}_{1,n}(\R^{n+1})$ where $\mathrm{Ein}^{n,n}_0$ is the subset formed by points $[v,\varphi]\in\mathbb{P}(V)$ with $\varphi(v)=0$, $v\neq 0$ and $\varphi\neq 0$. In particular, we can consider the projection of the hyperplane boundary set $\overline{\Lambda}_\Omega:=p(\Lambda_\Omega)$ onto the partial flag variety.
\begin{manualtheorem}E[{Theorem \ref{cor:plateau_problem_paracomplex_hyperbolic}}]\label{thm:E_introduction}
Given a subset $\overline{\Lambda}_\Omega\subset\mathcal{F}_{1,n}(\R^{n+1})$ obtained from a hyperplane boundary set $\Lambda_\Omega$, there exists a $n$-dimensional spacelike, maximal, Lagrangian submanifold in $\hat{\mathbb H}^n_\tau$ whose boundary at infinity is equal to $\overline{\Lambda}_\Omega$. Moreover, the induced metric is complete and the $n$-submanifold is unique.
\end{manualtheorem}
\noindent In brief, given a strictly convex set \(\Omega \subset \mathbb{R}\mathbb{P}^n\), 
the proof relies on the existence of a hyperbolic affine sphere 
\(f : M \to \mathbb{R}^{n+1}\) asymptotic to the boundary of the sharp convex cone 
\(\mathcal{C}_+(\Omega)\) (\cite{cheng1977regularity, cheng1986complete, gigena1978integral, gigena1981conjecture, li1990calabi, li1992calabi}). This immersion carries a positive definite 
affine metric, the \emph{Blaschke metric}, which is known to be complete; completeness is in fact equivalent to the immersion \(f\) being proper (\cite{cheng1977regularity, cheng1986complete}). Using the construction described in Section \ref{sec:the_correspondence_introduction}, we can then define an 
\(n\)-dimensional submanifold in \(\hat{\mathbb{H}}^n_\tau\) with the desired 
properties, and verify that its boundary at infinity coincides with 
\(\overline{\Lambda}_\Omega\), which turns out to be homeomorphic to $\partial\Omega$. 
\\ \\ 
Concerning the pseudohyperbolic case, given a strictly convex subset $\Omega\subset\R\mathbb P^n$, we can define a $1$-parameter family of immersions $\{\sigma_t^-\}_{t \in \R}$ with values in $\hat{\mathbb H}^{n+1,n}$. For each fixed $t \in \R$, we compute the boundary at infinity of $\sigma_t^-$ and show that it forms a subset $\Lambda_\Omega^t \subset \Lambda_\Omega$ (Lemma \ref{lem:buondary_pseudosphere}). These subsets, as $t \in \R$ varies, are all distinct and generate a foliation of the hyperplane boundary set $\Lambda_\Omega$. Finally, an interesting construction also arises for the immersion
$
\varsigma^-(p,t) := (-e^t f(p), e^{-t} \nu_p),
$
obtained by post-composing $\sigma^-$ with the $\R$-action on $\hat{\mathbb H}^{n+1,n}$. This gives rise to an $(n+1)$-dimensional submanifold in $\hat{\mathbb H}^{n+1,n}$ whose boundary at infinity can be reconstructed using $\Lambda_\Omega$ and the convex set itself (Proposition \ref{prop:boundary_immersion_into_paracomplex_hyperbolic}).

\subsection{Blaschke lift of hyperbolic affine spheres}
As in the previous section, we now focus on the case where the convex affine immersion $f:M\to\mathbb R^{n+1}$ is a hyperbolic affine sphere endowed with its Riemannian Blaschke metric $h$. In the case $n=2$, Labourie (\cite{Labourie_cubic}) defined, starting from $f$, a map $\mathcal G_f:M\to\mathbb X_{3}$ into the symmetric space of $\SL(3,\mathbb R)$, called the \emph{Blaschke lift}. He proved that this map is harmonic and conformal — hence minimal — where the Blaschke metric is used on the affine sphere, and the Riemannian metric induced by the Killing form on the Lie algebra is used on the symmetric space. In fact, this construction can be generalized to any dimension. Given a hyperbolic affine sphere $f:M\to\mathbb R^{n+1}$, after identifying
$
\mathbb X_{n+1}=\SL(n+1,\mathbb R)/\SO(n+1)
$
with the space of scalar products on $\mathbb R^{n+1}$ with unit determinant, we define
$
\mathcal G_f(p):=h_p(\cdot,\cdot)\oplus\lambda(p)\cdot f(p),
$
where $p\in M$ and $\lambda(p)=(\det h_p)^{-1}$.
From the definition and from the properties of hyperbolic affine spheres, it is clear that the element $\mathcal G_f(p)$ defines a point in $\mathbb X_{n+1}$, indeed for any $p\in M$ there is a decomposition $\R^{n+1}=T_pM\oplus\R\cdot f(p)$. We can therefore generalize Labourie’s result to any dimension:
\begin{manualtheorem}F[{Theorem \ref{thm:harmonic_transvers_map_lift} and Corollary \ref{cor:gauss_lift_harmonic}}]\label{thm:F}
Given a hyperbolic affine sphere $f:M\to\R^{n+1}$ its Blaschke lift $\mathcal G_f:M\to\mathbb X_{n+1}$ is harmonic.
\end{manualtheorem}
\noindent The main idea behind the proof of this result is to express the Blaschke lift $\mathcal G_f$ as the composition of two other maps that are simpler to study from this point of view.
More precisely, if we consider the induced map $\sigma^+=(f,\nu):M\to\mathbb S^{n,n+1}$, we can construct a lift $\tilde{\mathcal G}_f$ into the pseudo-Riemannian space
$
\mathcal X := \{ (u,P,q) \mid u\in\mathbb R^{n+1}, \ P\subset\mathbb R^{n+1} \text{ hyperplane}, \ u\notin P, \ q \ \text{scalar product on} \ P \}.
$
Using the homogeneous description of these spaces in terms of Lie groups, one can show the existence of a pseudo-Riemannian submersion
$
\pi_{n+1}:\mathcal X \to \mathbb X_{n+1}
$
such that $\mathcal G_f = \pi_{n+1}\circ \tilde{\mathcal G}_f$. At this point, a direct computation shows that $\tilde{\mathcal G}_f$ is harmonic and that the image of its differential is orthogonal to $\ker(\mathrm d\pi_{n+1})$ (Theorem \ref{thm:harmonic_transvers_map_lift}). This allows one to conclude that $\mathcal G_f$ is harmonic in $\mathbb X_{n+1}$. It is worth pointing out that, in order to prove these results, the assumption that $f$ is a hyperbolic affine sphere plays a crucial role, and it seems unlikely that this hypothesis could be weakened. The scheme of the maps involved is summarized in the diagram below.
\begin{equation*}\begin{tikzcd}
                                                                                                              &                                      & \mathcal X \arrow[ld, ] \arrow[rd, "\pi_{n+1}"] &                 \\
                                                                                                              & {\mathbb S^{n,n+1}} \arrow[d,] &                                                                   & \mathbb X_{n+1} \\
M \arrow[r, "\bar\sigma"] \arrow[ru, "\sigma^+", shift left] \arrow[rruu, "\tilde{\mathcal G}_f", bend left=49] & \mathbb H_\tau^n                     &                                                                   &                
\end{tikzcd}\end{equation*}
\noindent As a consequence, we obtain a map 
\(
\mathcal{G}_{\sigma^+} := \Phi \circ \mathcal{G}_f : M \to \mathbb{Y}_{n+1},
\)
which is also harmonic (Theorem \ref{thm:harmonic_map_SO(n+1,n+1)}), where $\mathbb{Y}_{n+1}$ is the symmetric space of 
$\SO_0(n+1,n+1)$ and 
\(
\Phi : \mathbb{X}_{n+1} \hookrightarrow \mathbb{Y}_{n+1}
\)
is the explicit inclusion induced by 
\(
\iota : \SL(n+1,\mathbb{R}) \hookrightarrow \SO_0(n+1,n+1).
\) (Section \ref{sec:Lie_groups_inclusion}).
It would certainly be interesting to understand whether, in the setting of complete 
$p$-dimensional maximal spacelike submanifolds in $\mathbb{H}^{p,q}$ (\cite{seppi2023complete}), one can construct a 
map from the maximal submanifold into the symmetric space of $\SO_0(p,q)$ that turns out to 
be harmonic (in the case $p=2$, $q\ge 1$, this is proved in \cite{CTT}).

\subsection*{Structure of the paper}
We now outline the organization of the paper, in order to assist the reader in locating the various arguments. Section \ref{sec:2} is entirely devoted to preliminaries, and includes a review of affine differential geometry, convex real projective geometry, pseudo-Euclidean geometry in neutral signature and its quadrics as pseudo-Riemannian submanifolds, as well as the principal $\R$-bundle structure of $\mathbb H^{n+1,n}$ and $\mathbb S^{n,n+1}$ over a para-K\"ahler base manifold. %In Section \ref{sec:main_construction}, we introduce two isometric models of the pseudo-Euclidean space under consideration, which play a fundamental role in the subsequent development. Furthermore, we discuss a relation between para-complex geometry and pseudo-Euclidean geometry in neutral signature, and we present the explicit inclusion of Lie groups$\SL(n+1,\R) \hookrightarrow \SO_0(n+1,n+1).$
Section \ref{sec:main_construction} is devoted to the proof of the main construction of the paper. The first part concerns the construction of immersions into pseudo-Riemannian quadrics arising from equiaffine immersions, thereby establishing Theorem \ref{thm:A} and Theorem \ref{thm:B} (Section \ref{sec:immersions_pseudo_spheres} and \ref{sec:immersion_para-complex_hyperbolic}). The second part addresses the inverse problem, proving Theorem \ref{thm:C} and Corollary \ref{cor:D} (Section \ref{sec:inverse_problem}). 
The first application, concerning the existence of \(n\)-dimensional maximal spacelike submanifolds in \(\mathbb H^{n+1,n}\) with prescribed boundary at infinity, is discussed in Section \ref{sec:boundary_problem}. As for the construction of harmonic maps into the symmetric spaces of \(\SL(n+1,\mathbb R)\) and \(\SO_0(n+1,n+1)\), we refer the reader to Section \ref{sec:5}. Finally, In Appendix \ref{appendix_A} and Appendix \ref{appendix_B}, we introduce a simple yet general criterion for inducing contact forms on hypersurfaces in symplectic manifolds, and para-Sasaki metrics on principal $G$-bundles over para-Kähler manifolds, respectively.

\subsection*{Acknowledgements}The author is grateful to Andrea Seppi for his constant support during the preparation of this work, for carefully reading an initial draft of the paper, and for providing comments and suggestions that improved the exposition. The author also thanks Jérémy Toulisse for helpful conversations on harmonic maps during his visit to Nice in December 2024. The author thanks Enrico Trebeschi for his interest in this work and for many discussions on the topic. The material presented in this article has benefited from conversations with Colin Davalo and Alex Moriani. N.R. is funded by the European Union (ERC, GENERATE, 101124349). Views and opinions expressed are however those of the author(s) only and do not necessarily reflect those of the European Union or the European Research Council Executive Agency. Neither the European Union nor the granting authority can be held responsible for them.
\section{Preliminaries}\label{sec:2}
\subsection{Affine differential geometry}\label{sec:affine_differential_geometry}
The aim of this section is to introduce the main definitions and objects of affine differential geometry that will be used in the sequel. The exposition will largely follow \cite{nomizu1994affine}, although certain results will be made explicit and further developed for the purpose of the main construction, which will be explained in Section \ref{sec:main_construction}. \\ \\ We are interested in the study of hypersurface immersions $f : M \to \mathbb{R}^{n+1}$ equipped with a transverse vector field $\xi$, that is, such that for every $p \in M$, we have a decomposition
$$
\mathbb{R}^{n+1} = f_*\big(T_pM\big) \oplus\Span\{\xi_p\}.
$$
For any $X,Y\in\Gamma(TM)$, the immersion satisfies the following structure equations:
\begin{equation}\begin{cases}\label{eq:structure_equations_f}
    D_Xf_*Y=f_*\big(\nabla_XY\big)+h(X,Y)\xi \\ D_X\xi=-f_*\big(S(X)\big)+\tau(X)\xi
\end{cases}\end{equation} where $D$ is the flat connection on $\R^{n+1}$, $\nabla$ is the induced \emph{affine connection}, $h$ is a symmetric bi-linear form on $TM$ called the \emph{affine metric}, $S$ is an endomorphism of $TM$ called the \emph{affine shape operator} and $\tau$ is a $1$-form on $M$. \begin{example}\label{ex:centro_affine}
Let $f:M\to\R^{n+1}$ be a transverse immersion and suppose that the transverse vector field $\xi$ is proportional to the position vector, namely for all $p\in M$ we have $\xi_p=f(p)$. In this case $D_X\xi=f_*(X)$, so that $\tau=0$ and $S=-\Id$ (\cite[Chapter II, Example 2.2]{nomizu1994affine}). Such immersions are referred to as \emph{centroaffine immersions} and will be fundamental in the sequel.
\end{example} \noindent In the general case of a transverse immersion $f:M\to\R^{n+1}$, given a volume form $\tilde\omega$ on $\R^{n+1}$ we can define a volume form $\theta$ on $M$ by the formula: $$\theta(X_1,\dots,X_n):=\tilde\omega\big(f_*(X_1),\dots,f_*(X_n),\xi\big),\quad \ X_i\in\Gamma(TM) \ \text{for} \ i=1,\dots,n$$ \
\begin{defi}
The immersion $f:M\to\R^{n+1}$ is called \emph{equiaffine} if $\nabla\theta=0$ or, equivalently (\cite[Chapter II, Proposition 1.4]{nomizu1994affine}), if $\tau(X)=0$ for any $X\in\Gamma(TM)$.
\end{defi}
\noindent Since $ \theta $ is defined from the data of $ f $ and $ \xi $, we will denote an equiaffine immersion by the pair $ (f, \xi) $. In particular, all centroaffine immersions introduced in Example \ref{ex:centro_affine} are also equiaffine.
\begin{lemma}[\cite{nomizu1994affine}]\label{lem:CodazzihandS}
Let $(f,\xi)$ be an equiaffine immersion, then for any $X,Y,Z\in\Gamma(TM)$ we have \begin{enumerate}
    \item[(i)] $(\nabla_Xh)(Y,Z)=(\nabla_Yh)(X,Z)$;
    \item[(ii)] $(\nabla_XS)Y=(\nabla_YS)X$;
    \item[(iii)] $h(S(X),Y)=h(X,S(Y))$.
\end{enumerate}
\end{lemma}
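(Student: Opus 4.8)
The plan is to deduce all three identities from the flatness of the ambient connection $D$, whose vanishing curvature gives, for all $X,Y,Z\in\Gamma(TM)$,
\[
D_X D_Y f_*Z - D_Y D_X f_*Z - D_{[X,Y]}f_*Z = 0, \qquad D_X D_Y \xi - D_Y D_X \xi - D_{[X,Y]}\xi = 0.
\]
Expanding each term by means of the structure equations \eqref{eq:structure_equations_f} and using $\tau\equiv 0$ (which is precisely the equiaffine hypothesis), each of these two identities splits into a tangential part, lying in $f_*(T_pM)$, and a transversal part, i.e.\ the coefficient of $\xi$. Since $f_*$ is injective, the two parts may be handled separately, and three of the four resulting equations are exactly (i)--(iii) (the remaining one is the Gauss equation, which we do not need here).

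First I would compute, using $D_Y f_*Z = f_*(\nabla_Y Z)+h(Y,Z)\xi$ together with $D_X\xi = -f_*(S(X))$,
\[
D_X D_Y f_*Z = f_*(\nabla_X\nabla_Y Z) - h(Y,Z)\,f_*(S(X)) + \big(X(h(Y,Z)) + h(X,\nabla_Y Z)\big)\xi .
\]
Antisymmetrizing in $X,Y$ and subtracting $D_{[X,Y]}f_*Z$, the transversal part reads
\[
X(h(Y,Z)) + h(X,\nabla_Y Z) - Y(h(X,Z)) - h(Y,\nabla_X Z) - h([X,Y],Z) = 0 .
\]
Rewriting $X(h(Y,Z))$ and $Y(h(X,Z))$ through the defining identity of $\nabla h$, the terms $h(X,\nabla_Y Z)$ and $h(Y,\nabla_X Z)$ cancel against those produced by the expansion, leaving
\[
(\nabla_X h)(Y,Z) - (\nabla_Y h)(X,Z) + h\big(\nabla_X Y - \nabla_Y X - [X,Y],\,Z\big) = 0 ;
\]
the torsion-freeness of $\nabla$ annihilates the last term, which gives (i). (The tangential part of the same identity yields the Gauss equation $R(X,Y)Z = h(Y,Z)S(X) - h(X,Z)S(Y)$.)

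Next, from $D_Y\xi = -f_*(S(Y))$ one obtains
\[
D_X D_Y \xi = -f_*\big(\nabla_X(S(Y))\big) - h(X,S(Y))\,\xi .
\]
Antisymmetrizing and subtracting $D_{[X,Y]}\xi = -f_*(S([X,Y]))$, the tangential part is $\nabla_X(S(Y)) - \nabla_Y(S(X)) - S([X,Y]) = 0$; writing $\nabla_X(S(Y)) = (\nabla_X S)(Y) + S(\nabla_X Y)$ and again using that $\nabla$ is torsion-free, the terms involving $S(\nabla_\cdot\,\cdot)$ and $S([X,Y])$ collapse, leaving (ii). The transversal part is $h(Y,S(X)) - h(X,S(Y)) = 0$, which by the symmetry of $h$ is exactly (iii).

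There is essentially no obstacle here: the whole lemma is a bookkeeping exercise in separating the tangential and transversal components of the two flatness identities and applying the structure equations. The two points requiring care are that the equiaffine hypothesis $\tau\equiv 0$ is genuinely used throughout --- without it, extra $\tau$-terms survive and spoil every cancellation --- and that the symmetry of $h$ must be tracked when passing between $h(X,S(Y))$ and $h(S(X),Y)$ in (iii). One could also simply invoke \cite{nomizu1994affine}, but the computation above is short enough to reproduce.
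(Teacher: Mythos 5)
Your proof is correct: the paper itself offers no argument for this lemma and simply cites \cite{nomizu1994affine}, and your derivation — splitting the two flatness identities $R^D(X,Y)f_*Z=0$ and $R^D(X,Y)\xi=0$ into tangential and transversal parts via the structure equations with $\tau\equiv 0$ — is exactly the standard computation carried out in that reference (yielding Gauss, the two Codazzi equations, and the Ricci-type identity (iii)). Nothing is missing; all cancellations you invoke (torsion-freeness of $\nabla$, symmetry of $h$, injectivity of $f_*$) are used correctly.
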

\noindent The first and second equations in the above lemma are often referred to as the \emph{Codazzi equations} for $h$ and $S$ respectively. In particular, the first one implies that the $(0,3)$-tensor defined by
\begin{equation}\label{eq:tensoreC}
C(X, Y, Z) := (\nabla_X h)(Y, Z)
\end{equation}
is totally symmetric. Up to this point, no assumption has been made on the rank of the symmetric bilinear form $h$, which, a priori, may be degenerate. In fact, it can be shown that the rank does not depend on the choice of the transverse vector field $\xi$, but only on the hypersurface itself (see \cite[Chapter II, Proposition 2.5]{nomizu1994affine}). By requiring the bilinear form $h$ to be non-degenerate, i.e., of maximal rank, one may consider affine immersions with additional properties.
\begin{defi}
Let $(f,\xi)$ be an equiaffine immersion with non-degenerate affine metric $h$, then it is called a \emph{Blaschke immersion} if $\theta=\omega_h$, where $\omega_h$ is the volume form of $h$ on $M$. In such a case, for any $p\in M$ the line going through $p$ in the direction of $\xi_p$ is called the \emph{affine normal}.
\end{defi}
\begin{defi}\label{def:affine_sphere}
A Blaschke immersion $f:M\to\R^{n+1}$ is called a \emph{proper affine sphere} if $S=-\lambda\Id$, for $\lambda:M\to\R^*$ a smooth function. In particular, the affine normals meet at one point in $\R^{n+1}$, called \emph{the center}.
\end{defi}
\begin{remark}
By the Codazzi equation for $S$ (Lemma \ref{lem:CodazzihandS} point (ii)), in the case of a proper affine sphere, it can be shown that the function $\lambda$ must be constant (\cite[Chapter II, Proposition 3.4]{nomizu1994affine}).
\end{remark}
\noindent In the case of a centroaffine immersion $f : M \to \mathbb{R}^{n+1}$ with non-degenerate affine metric, one can obtain a criterion for determining whether $f$ is a proper affine sphere by using the trace of the totally symmetric tensor $C$, defined in (\ref{eq:tensoreC}). Indeed, there exists a unique 1-form $A$ with values in $\mathrm{End}(TM)$ such that
\begin{equation}
C(X, Y, Z) = -2h\big(A(X)Y, Z\big)
\end{equation}
for all $X, Y, Z \in \Gamma(TM)$. In particular, it can be shown that $A = \nabla - \nabla^h$, where $\nabla^h$ denotes the Levi-Civita connection of $h$ (\cite[Chapter II, Proposition 4.1]{nomizu1994affine}). We can thus define the trace of $C = \nabla h$ as a $1$-form on $M$ by the following formula:
\begin{equation}\label{eq:traccia_C}\mathrm{Tr}_h(C)(X) := \operatorname{tr}(h^{-1} \nabla_X h) = -2 \operatorname{tr}\big(A(X)\big) \quad \text{for} \ X\in\Gamma(TM),\end{equation}
where we regard $\nabla_X h$ as a smooth section of $S^2 T^*M$, that is, a symmetric non-degenerate bilinear form on $TM$. In a local $h$-orthonormal basis $\{e_1,\dots,e_n\}$ we have \begin{equation}\label{eq:trace_Pick_tensor}
\mathrm{Tr}_h(C)(X)=\sum_{i=1}^n\epsilon_i\cdot C(X,e_i,e_i)=-2\sum_{i=1}^n\epsilon_i\cdot h(A(X)e_i,e_i),
\end{equation}where $\epsilon_i=h(e_i,e_i)$. %We now state three results concerning the trace of the tensor $C$ and some of its derivatives, whose proofs we have not been able to find in the literature, although it is very likely that they were already known.

\begin{prop}[{\cite[Lemma 4.4]{benoist2013cubic}}]\label{prop:properaffinesphere_trace_less}
A centroaffine immersion $f:M\to\R^{n+1}$ with non-degenerate affine metric $h$ is a proper affine sphere if and only if $\trace\big(A(X)\big)=0$ for any $X\in\Gamma(TM)$ or, equivalently, if and only if the $1$-form $\mathrm{Tr}_h(C)$ is equal to zero. 
\end{prop}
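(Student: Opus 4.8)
The plan is to compare the two natural top-degree forms on $M$ attached to the centroaffine data --- the form $\theta$ defined by $(f,f)$ and the metric volume form $\omega_h$ of the affine metric $h$ --- and to recognise the vanishing of $\mathrm{Tr}_h(C)$ as the statement that their ratio is constant. The equivalence between the two conditions in the statement is immediate: by the defining identity in (\ref{eq:traccia_C}) one has $\mathrm{Tr}_h(C)(X)=-2\,\trace\big(A(X)\big)$ for every $X$, so $\trace(A(X))=0$ for all $X$ exactly when $\mathrm{Tr}_h(C)\equiv0$. It therefore suffices to prove that a centroaffine immersion with non-degenerate $h$ is a proper affine sphere if and only if $\mathrm{Tr}_h(C)\equiv0$.

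The technical core is the following general fact, which I would establish first for an arbitrary equiaffine immersion $(f,\xi)$ with non-degenerate $h$: writing $\omega_h=\psi\,\theta$ for a nowhere-vanishing function $\psi$, one has $X\log|\psi|=\tfrac12\,\mathrm{Tr}_h(C)(X)$. To see this, recall the standard identity $\nabla_X\mu=\nabla^h_X\mu-\trace(A(X))\,\mu$ valid for any volume form $\mu$, where $A=\nabla-\nabla^h$ as in the text. Applied to $\mu=\omega_h$, and using that the Levi-Civita connection preserves its volume form, $\nabla^h\omega_h=0$, this gives $\nabla_X\omega_h=-\trace(A(X))\,\omega_h$; on the other hand, equiaffinity gives $\nabla\theta=0$, hence $\nabla_X\omega_h=\nabla_X(\psi\theta)=\tfrac{X\psi}{\psi}\,\omega_h$. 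Comparing the two expressions and invoking the previous paragraph yields the formula. In particular $\mathrm{Tr}_h(C)\equiv0$ holds if and only if $\psi$ is (locally) constant.

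I would then specialise to the centroaffine case $\xi=f$, where $S=-\Id$ by Example \ref{ex:centro_affine}. The point is how the data transform under a rescaling $\xi\mapsto c\,\xi$ of the transversal by a constant $c>0$: the connection $\nabla$ is unchanged, $h\mapsto c^{-1}h$, $\theta\mapsto c\,\theta$, hence $\omega_h\mapsto c^{-n/2}\omega_h$, and $S\mapsto cS$, while the immersion stays equiaffine. Thus $(f,cf)$ is a Blaschke immersion (i.e.\ $\theta_{cf}=\omega_{h_{cf}}$) precisely when $\psi=\omega_h/\theta$ equals the constant $c^{\,1+n/2}$, and in that case its shape operator $-c\,\Id$ is a nonzero multiple of the identity, so $(f,cf)$ --- hence $f$ --- is a proper affine sphere. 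This proves the ``if'' direction: if $\mathrm{Tr}_h(C)\equiv0$, then $\psi$ is a positive constant (after fixing orientations), one takes $c=\psi^{2/(n+2)}$, and $(f,cf)$ is a proper affine sphere. For the ``only if'' direction, if $f$ is a proper affine sphere then, its centre being the origin, the structure equation for the affine normal $\xi_{\mathrm{Bl}}$ together with the Remark following Definition \ref{def:affine_sphere} forces $\xi_{\mathrm{Bl}}=\lambda f$ with $\lambda$ a nonzero constant; hence $(f,\lambda f)$ is Blaschke, so $\psi=\lambda^{\,1+n/2}$ is constant and $\mathrm{Tr}_h(C)\equiv0$ by the formula above.

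The only delicate point --- the ``hard part'', such as it is --- is the bookkeeping of the constant normalisation relating $\theta$ and $\omega_h$ under rescalings of the transversal field, and making explicit the convention that for a centroaffine immersion which is a proper affine sphere the centre lies at the origin, so that the affine normal is a constant multiple of the position vector. Everything else --- the covariant-derivative identity for volume forms, the behaviour of $h$, $\theta$, $\omega_h$ and $S$ under rescaling, and the structure equations --- is routine computation.
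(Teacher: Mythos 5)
Your argument is correct. Note that the paper does not prove this proposition at all: it is quoted from \cite[Lemma 4.4]{benoist2013cubic}, so there is no in-paper proof to compare against. What you give is a clean self-contained proof, and it is essentially the standard mechanism behind the cited lemma: the identity $\nabla_X\mu-\nabla^h_X\mu=-\trace(A(X))\,\mu$ for top-degree forms, together with $\nabla\theta=0$ and $\nabla^h\omega_h=0$, shows that $\mathrm{Tr}_h(C)$ is (twice) the logarithmic derivative of $\omega_h/\theta$, i.e.\ the apolarity condition measures the failure of the centroaffine transversal to be a constant rescaling of the Blaschke normal; the rescaling bookkeeping ($h\mapsto c^{-1}h$, $\theta\mapsto c\theta$, $\omega_h\mapsto c^{-n/2}\omega_h$, $S\mapsto cS$) and the choice $c=\psi^{2/(n+2)}$ are exactly right, as is the converse via $\xi_{\mathrm{Bl}}=\lambda f$ for a proper affine sphere centered at the origin. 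The two points you flag are indeed only conventions and not gaps: the statement is to be read with the center at the origin (this is how it is used in the paper, e.g.\ for the conormal immersion after Lemma \ref{lem:mean_curvature_sigma}), and choosing on $M$ the orientation induced by $\theta$ makes $\psi>0$, so the Blaschke condition $\theta=\omega_h$ can be achieved with a positive constant $c$.
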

\noindent  The next two results are technical lemmas concerning the tensor $A$ and specific symmetries of its covariant derivatives.
\begin{lemma}[{\cite[Chapter II, Proposition 9.7]{nomizu1994affine}}]\label{lem:total_symmetry_nabla_C}
Let $f:M\to\R^{n+1}$ be a Blaschke immersion with non-degenerate metric $h$ and $\det(S)\neq 0$, then the tensor \begin{equation}T(X,Y,Z,W):=h\big((\nabla^h_XA)(Y)Z,W\big)\end{equation} is totally symmetric, i.e. is symmetric in each entry, if and only if $f$ is a proper affine sphere.
\end{lemma}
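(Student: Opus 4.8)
The plan is to peel off the permutation symmetries of $T$ that hold for every Blaschke immersion, reducing total symmetry to a single transposition, and then to identify the obstruction to that transposition with an explicit algebraic tensor built from $h$ and $S$. First I would record the automatic symmetries. Since $C=\nabla h$ is totally symmetric and $C(X,Y,Z)=-2h\big(A(X)Y,Z\big)$ with $A=\nabla-\nabla^h$, the difference tensor satisfies $A(X)Y=A(Y)X$ and $h\big(A(X)Y,Z\big)=h\big(A(X)Z,Y\big)$; differentiating these two identities and using that $\nabla^h$ is torsion-free and metric shows that $(\nabla^h_XA)(Y)Z$ is symmetric in $(Y,Z)$ and that $(\nabla^h_XA)(Y)$ is $h$-self-adjoint. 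Equivalently, $T(X,Y,Z,W)$ is already invariant under every permutation of its last three slots, so — since the transpositions $(2\,3)$, $(3\,4)$ and $(1\,2)$ generate $S_4$ — it remains only to decide when $T(X,Y,Z,W)=T(Y,X,Z,W)$.

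The core of the argument is a curvature comparison. Writing $\nabla=\nabla^h+A$, the curvature tensors are related by
\[
R^{\nabla}(X,Y)=R^{h}(X,Y)+(\nabla^h_XA)(Y)-(\nabla^h_YA)(X)+[A(X),A(Y)].
\]
I would apply the operation $U\mapsto h(UZ,W)+h(UW,Z)$ to both sides and exploit that $R^h(X,Y)$ is $h$-skew-adjoint (Levi-Civita), that $[A(X),A(Y)]$ is $h$-skew-adjoint (a commutator of $h$-self-adjoint operators), that $(\nabla^h_XA)(Y)$ is $h$-self-adjoint (the last-two-slot symmetry just established), and that, on the left, the Gauss equation $R^{\nabla}(X,Y)Z=h(Y,Z)S(X)-h(X,Z)S(Y)$ holds (a direct consequence of the structure equations \eqref{eq:structure_equations_f}). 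Everything collapses to the identity
\[
T(X,Y,Z,W)-T(Y,X,Z,W)=\tfrac12\,Q(X,Y,Z,W),
\]
where $Q(X,Y,Z,W):=h(Y,Z)h(SX,W)-h(X,Z)h(SY,W)+h(Y,W)h(SX,Z)-h(X,W)h(SY,Z)$. Hence $T$ is totally symmetric if and only if $Q\equiv 0$.

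It then remains to solve $Q\equiv 0$. Reading $Q$ as a symmetric bilinear form in $(Z,W)$, the equation says $Y^{\flat}\odot(SX)^{\flat}=X^{\flat}\odot(SY)^{\flat}$ in $\Sym^{2}T^{*}M$ for all $X,Y$. Fixing $X\neq 0$ and choosing $Y$ with $X^{\flat},Y^{\flat}$ linearly independent (possible since $n\ge 2$ and $h$ is non-degenerate), a coefficient comparison in the natural decomposition of $\Sym^{2}T^{*}M$ forces $SX\in\R X$; as this holds for every $X$, $S=\lambda\,\Id$ for a smooth function $\lambda$. Then $(\nabla_{X}S)Y=(X\lambda)Y$, so Codazzi for $S$ (Lemma \ref{lem:CodazzihandS}(ii)) gives $(X\lambda)Y=(Y\lambda)X$, forcing $d\lambda=0$; since $\det S\neq 0$, $\lambda$ is a nonzero constant, i.e.\ $f$ is a proper affine sphere. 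The converse is immediate: if $S=\lambda\,\Id$ then $Q$ vanishes identically and $T$ is totally symmetric.

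I expect the main obstacle to be the bookkeeping in the curvature comparison — keeping precise track of which correction terms $(\nabla^h A)$ and $[A,A]$ are $h$-symmetric versus $h$-skew, so that the symmetrization $h(\cdot\,Z,W)+h(\cdot\,W,Z)$ annihilates exactly the right ones and leaves precisely $Q$. By contrast, once the last-three-slot symmetries of $T$ are in hand, the final linear-algebra step for $Q\equiv 0$ is elementary.
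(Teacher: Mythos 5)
The paper itself gives no proof of this lemma — it is quoted verbatim from Nomizu--Sasaki (Chapter II, Proposition 9.7) — so the comparison is against the classical argument, and your proposal is in fact a correct, complete rendition of essentially that argument. The automatic symmetries are right: since $T(X,Y,Z,W)=-\tfrac12(\nabla^h_XC)(Y,Z,W)$ and $C$ is totally symmetric (Codazzi for $h$), $T$ is symmetric in its last three slots and each $(\nabla^h_XA)(Y)$ is $h$-self-adjoint, so total symmetry reduces to the transposition of the first two slots. Your curvature comparison is also correct: with $\nabla=\nabla^h+A$ one has $R^{\nabla}(X,Y)=R^{h}(X,Y)+(\nabla^h_XA)(Y)-(\nabla^h_YA)(X)+[A(X),A(Y)]$, the symmetrization $U\mapsto h(UZ,W)+h(UW,Z)$ annihilates the $h$-skew terms $R^h(X,Y)$ and $[A(X),A(Y)]$, and the Gauss equation $R^{\nabla}(X,Y)Z=h(Y,Z)S(X)-h(X,Z)S(Y)$ (which follows from the structure equations (\ref{eq:structure_equations_f}) with $\tau=0$) turns the left-hand side into your tensor $Q$, giving $T(X,Y,Z,W)-T(Y,X,Z,W)=\tfrac12 Q(X,Y,Z,W)$. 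The final linear-algebra step is sound: $Q\equiv0$ says $Y^{\flat}\odot(SX)^{\flat}=X^{\flat}\odot(SY)^{\flat}$, and since a nonzero symmetric product of covectors determines the unordered pair of lines they span, choosing $Y$ independent of $X$ forces $SX\in\R X$ for every $X$, hence $S=\lambda\,\Id$ with $\lambda$ nonvanishing (by $\det S\neq0$) and constant (by Codazzi for $S$); the converse direction is immediate since $S=-\lambda\,\Id$ makes $Q$ vanish identically.

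The only loose end is the case $n=1$, which your choice of $Y$ with $X^{\flat},Y^{\flat}$ independent excludes; there the statement is trivially true ($Q$ is skew in $(X,Y)$, hence identically zero, and any endomorphism of a line is a multiple of the identity), so a one-line remark disposes of it. Apart from that, the argument is complete and matches the standard proof of the cited result.
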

%\begin{proof}Notice that for any $X\in\Gamma(TM)$, the element $(\nabla^h_XA)(\cdot)$ is a $1$-form with values in the endomorphism bundle of $TM$. The tensor $T$ is symmetric in the first two entries since $(\nabla^h_XA)(Y)=(\nabla^h_YA)(X)$ (\cite[Lemma 4.3]{benoist2013cubic}). Since $f$ is equiaffine, by Lemma \ref{lem:CodazzihandS} and Equation (\ref{eq:tensoreC}), we know that $A(Y)Z=A(Z)Y$ for any $Y,Z\in\Gamma(TM)$. Taking the covariant derivative along $X$, we obtain:\begin{align*}(\nabla^h_XA)(Y)Z+A(\nabla^h_XY)Z+A(Y)\nabla^h_XZ&=(\nabla^h_XA)(Z)Y+A(\nabla^h_XZ)Y+A(Z)\nabla^h_XY \\ &=(\nabla^h_XA)(Z)Y+A(Y)\nabla^h_XZ+A(Z)\nabla^h_XY\end{align*} which implies $(\nabla^h_XA)(Y)Z=(\nabla^h_XA)(Z)Y$, namely $T$ is symmetric in the second and third entries. Again, by Lemma \ref{lem:CodazzihandS}, we know that $h\big(A(Y)Z,W\big)=h\big(Z,A(Y)W\big)$, or in other words, $A(Y)$ is an $h$-symmetric endomorphism of $TM$ for any $Y\in\Gamma(TM)$. Taking the derivative of last relation, we get $$h\big(\nabla^h_X(A(Y)Z),W\big)+h\big(A(Y)Z,\nabla^h_XW\big)=h\big(\nabla^h_XZ,A(Y)W\big)+h\big(Z,\nabla^h_X(A(Y)W)\big).$$ Expanding the terms involving the derivative of $A$ as done in the previous step, and cancelling the similar terms, we obtain $$h\big((\nabla^h_XA)(Y)Z,W\big)=h\big(Z,(\nabla^h_XA)(Y)W\big)$$that is, the symmetry in the third and fourth entries of the tensor $T$.\end{proof}
\noindent It is worth noting that the tensor $(\nabla^h_{\cdot} A)(\cdot)$ is a bi-linear form on $T^*M$ with values in the bundle of endomorphisms of $TM$, in the case where the immersion is merely equiaffine, and not necessarily a proper affine sphere. %In fact, the symmetry in the first and second entry readily follows from \cite[Lemma 4.3]{benoist2013cubic}. 
In particular, for any $X,Y\in\Gamma(TM)$, we can define
$$\operatorname{tr}_h\big(\nabla^h A\big)(X,Y) := \operatorname{tr}\big((\nabla^h_X A)(Y)\big).$$
\begin{lemma}\label{lem:trace_covariant_pick_tensor}
Let $(f,\xi)$ be an equiaffine immersion with non-degenerate metric $h$, then $$\trace_h(\nabla^hA)(X,Y)=X\cdot\trace\big(A(Y)\big)-\trace\big(A(\nabla^h_XY)\big).$$In particular, if $f$ is a proper affine sphere then $\trace_h(\nabla^hA)(X,Y)=0$ for any $X,Y\in\Gamma(TM)$.
\end{lemma}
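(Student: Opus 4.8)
The plan is to unwind the definition of the covariant derivative of the $\mathrm{End}(TM)$-valued $1$-form $A$ and to use that contraction of a $TM$-slot against a $T^*M$-slot commutes with covariant differentiation. Recall that $A$ is a section of $T^*M\otimes\mathrm{End}(TM)$, so $\nabla^h A$ is a section of $T^*M\otimes T^*M\otimes\mathrm{End}(TM)$ (the first factor being the differentiation slot), given for vector fields $X,Y$ by the Leibniz rule
\[
(\nabla^h_X A)(Y)=\nabla^h_X\big(A(Y)\big)-A\big(\nabla^h_X Y\big),
\]
where $A(Y)$ is the endomorphism field $p\mapsto A_p(Y_p)$ and $\nabla^h_X\big(A(Y)\big)$ is its covariant derivative as a section of $\mathrm{End}(TM)$. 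A short product-rule check shows the right-hand side is $C^\infty(M)$-linear in $Y$, so this is a well-defined tensor; contracting its $\mathrm{End}(TM)$-factor produces the bilinear form $\trace_h(\nabla^h A)$ of the statement.

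The main step is to apply the trace $\trace\colon\mathrm{End}(TM)\to C^\infty(M)$ to the Leibniz identity. Since $\trace$ is the canonical contraction, it is parallel for $\nabla^h$, hence $\trace\big(\nabla^h_X(A(Y))\big)=X\cdot\trace\big(A(Y)\big)$. Alternatively, one fixes a local $h$-orthonormal frame $\{e_i\}$ with dual coframe $\{e^i\}$, writes $\trace\big((\nabla^h_X A)(Y)\big)=\sum_i e^i\big((\nabla^h_X A)(Y)e_i\big)$, expands $(\nabla^h_X A)(Y)e_i$, and moves $X$ outside $e^i(A(Y)e_i)$; the contributions of $\nabla^h_X e_i$ and of $\nabla^h_X e^i$ cancel, leaving
\[
\trace_h(\nabla^h A)(X,Y)=\trace\big((\nabla^h_X A)(Y)\big)=X\cdot\trace\big(A(Y)\big)-\trace\big(A(\nabla^h_X Y)\big),
\]
which is the asserted identity.

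For the last assertion, suppose $f$ is a proper affine sphere. By Definition \ref{def:affine_sphere} it is in particular a Blaschke immersion, so $\theta=\omega_h$; being equiaffine, $\nabla\theta=0$, whence $\nabla\omega_h=0$. Expanding $\nabla_X\omega_h$ in terms of $\trace(h^{-1}\nabla_X h)$ shows that $\nabla\omega_h=0$ is equivalent to $\mathrm{Tr}_h(C)\equiv 0$, i.e., by (\ref{eq:traccia_C}), to the apolarity condition $\trace\big(A(X)\big)=0$ for every $X\in\Gamma(TM)$ (see \cite{nomizu1994affine}; compare Proposition \ref{prop:properaffinesphere_trace_less}). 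Substituting this into the identity above — once with argument $Y$, once with argument $\nabla^h_X Y$ — makes both terms on the right vanish, so $\trace_h(\nabla^h A)(X,Y)=0$. The only points that require care in this argument are the precise bookkeeping for $\nabla^h$ acting on the $(1,2)$-tensor $A$ and the commutation of contraction with $\nabla^h$ — both standard — together with recalling the apolarity identity for Blaschke immersions used in the final clause; there is no genuinely hard step.
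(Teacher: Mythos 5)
Your proof is correct and follows essentially the same route as the paper: the identity comes from the Leibniz rule for $\nabla^h A$ together with the fact that the trace contraction commutes with $\nabla^h$, which is exactly the content of the paper's orthonormal-frame computation (where the cancellation of the connection-form terms is checked by hand using metric compatibility and the $h$-symmetry of $A(Y)$). The only divergence is in the final clause, where you rederive $\trace\big(A(\cdot)\big)\equiv 0$ from apolarity ($\theta=\omega_h$ and $\nabla\theta=0$) instead of citing Proposition \ref{prop:properaffinesphere_trace_less} as the paper does; this is equally valid and in fact yields the vanishing already for any Blaschke immersion, not just proper affine spheres.
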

\begin{proof}
Let $\{e_1,\dots,e_n\}$ be a local orthonormal basis for $h$, by the previous definition $$\trace\big(\nabla^hA\big)(X,Y)=\sum_{i=1}^n\epsilon_i\cdot h\big((\nabla_X^hA)(Y)e_i,e_i\big),$$ where $\epsilon_i=h(e_i,e_i)$ is equal to either $1$ or $-1$. Taking the derivative along $X$ of $$\trace(A(Y))=\sum_{i=1}^n\epsilon_i\cdot h(A(Y)e_i,e_i)$$ we obtain $$
X\cdot\trace\big(A(Y)\big)=\sum_{i=1}^n\epsilon_i\Big(h\big(\nabla_X^h\big(A(Y)e_i\big),e_i\big)+h\big(A(Y)e_i,\nabla_X^he_i\big)\Big).
$$ Expanding the derivative of $A(Y)e_i$, we conclude \begin{align*}
X\cdot\trace\big(A(Y)\big)=&\sum_{i=1}^n\epsilon_i\Big(h\big((\nabla_X^hA)(Y)e_i,e_i\big)+h\big(A(\nabla_X^hY)e_i,e_i\big) \\ & +h\big(A(Y)\nabla_X^he_i,e_i\big)+h\big(A(Y)e_i,\nabla_X^he_i\big)\Big).
\end{align*}The first term in the right hand side of the above equation is exactly $\trace_h\big(\nabla^hA\big)(X,Y)$; the second one is equal to $\trace\big(A(\nabla_X^hY)\big)$; and the third and fourth one are equal by $h$-symmetry of $A(Y)$. In the end, we get \begin{align*}\label{eq:trace_nabla_A}X\cdot\trace\big(A(Y)\big)=\trace_h\big(\nabla^hA\big)(X,Y)+\trace\big(A(\nabla^h_XY)\big)-2\sum_{i,j}^n\epsilon_i\cdot\omega^j_i(X)\cdot h(A(Y)e_i,e_j)\end{align*} where $(\omega^j_i)_{ij}$ is defined by $\nabla_X^he_i=\sum_{j=1}^n\omega^j_i(X)e_j$, namely it is the local matrix of $1$-forms associated with $\nabla^h$. It is not difficult to show that, as in the Riemannian setting, the relation $\omega^j_i(X)=-\epsilon_i\epsilon_j\omega^i_j(X)$ holds. With this at hand, we can show that the sum in the above equation is equal to zero; indeed by exchanging the indices $i$ and $j$ we get \begin{align*}
    \sum_{i,j}^n\epsilon_i\cdot\omega^j_i(X)\cdot h(A(Y)e_i,e_j)&=\sum_{i,j}^n\epsilon_j\cdot\omega^i_j(X)\cdot h(A(Y)e_j,e_i) \\ &=-\sum_{i,j}^n\epsilon_i\cdot\omega^j_i(X)\cdot h(e_j,A(Y)e_i) \tag{$A$ is $h$-symmetric}
\end{align*}but this is possible if and only if the sum is zero, namely $$\trace_h(\nabla^hA)(X,Y)=X\cdot\trace\big(A(Y)\big)-\trace\big(A(\nabla^h_XY)\big).$$ In the case where $f$ is a proper affine sphere, we know from Proposition \ref{prop:properaffinesphere_trace_less} that this is equivalent to having $\trace(A(X)) = 0$ for every $X \in \Gamma(TM)$, from which we easily deduce the claim.
\end{proof}
\noindent In the case where a Blaschke immersion $f:M\to\R^{n+1}$ has definite affine metric $h$, there is a unique choice for the affine normal $\xi$ so that $h$ is actually positive-definite (\cite[Chapter III, Proposition 7.2]{nomizu1994affine}). In light of this, if $f:M\to\R^{n+1}$ is a convex Blaschke immersion and the affine normal points towards the convex side, the induced affine metric $h$ is positive definite.
\begin{defi}
Let $f:M\to\R^{n+1}$ be a convex proper affine sphere, i.e. $S=-\lambda\Id$ for $\lambda:M\to\R^*$, then it is called \emph{hyperbolic} if $\lambda>0$ and \emph{elliptic} if $\lambda<0$.
\end{defi}
\begin{remark}
In the setting of the definition, the function \(\lambda\) is in fact constant (\cite[Chapter II, Proposition 3.4]{nomizu1994affine}). In the first case of the above definition, the affine normal pointing toward the convex side, and hence such that $h$ is positive definite, is given by $\xi = f$, while in the second case $\xi = -f$. Moreover, the center of a hyperbolic (resp. elliptic) affine sphere lies on the concave (resp. convex) side of the hypersurface.
\end{remark}
\noindent We conclude this section by introducing the conormal map associated with an equiaffine immersion and stating some key properties that will be used in the sequel (see \cite[Chapter II, \S 5]{nomizu1994affine} for more details). %Let $(f,\xi)$ be an equiaffine immersion with non-degenerate affine metric $h$, and identify the dual space $(\mathbb{R}^{n+1})^*$ with $\mathbb{R}^{n+1}$ via a fixed non-degenerate symmetric bilinear form $Q$, that is,$$\begin{aligned}(\mathbb{R}^{n+1})^* &\longleftrightarrow \mathbb{R}^{n+1} \\\varphi &\longmapsto v_\varphi\end{aligned}$$where $v_\varphi$ is the unique vector such that $\varphi(w) = v_\varphi^t Q w$.
\begin{defi}\label{def:conormal_map}
The \emph{conormal map} $\nu:M\to(\mathbb{R}^{n+1})^*\setminus\{0\}$ associated with an equiaffine immersion $(f,\xi)$ is defined by: $$\nu_p(\xi_p)=1,\quad \nu_p\big(f_*(X)\big)=0 \ \ \text{for each} \ X\in T_pM$$ where $\nu_p$ is to be understood as a linear functional on $\mathbb{R}^{n+1}$.\end{defi}
\noindent The conormal map $\nu$ always defines a centroaffine immersion, whose structure equations are given by
\begin{equation}\label{eq:structure_equations_nu}\begin{cases}
    D^*_X\nu_*Y=\nu_*\big(\overline\nabla_XY\big)-\overline h(X,Y)\nu \\ D^*_X\nu=\nu_*(X)
\end{cases}\end{equation} where $D^*$ is the flat connection on $(\mathbb{R}^{n+1})^*$, $\overline\nabla$ is the \emph{dual affine connection} and $-\bar h$ is the induced \emph{dual affine metric}. \begin{prop}[\cite{nomizu1994affine}]\label{prop:properties_of_conormal_map}
The conormal map $\nu$ of an equiaffine immersion $(f,\xi)$ with non-degenerate affine metric $h$ satisfies the following properties: \begin{enumerate}
    \vspace{0.2em}\item[(i)] $\nu_*(Y)(\xi)=0$ and $\nu_*(Y)\big(f_*(X)\big)=-h(Y,X)$ for each $X,Y\in\Gamma(TM)$. In particular, $\nu$ is an immersion; 
    \vspace{0.2em}\item[(ii)] $\overline h(X,Y)=h(S(X),Y)=h(X,S(Y))$ for each $X,Y\in\Gamma(TM)$;
    \vspace{0.2em}\item[(iii)] $X\cdot h(Y,Z)=h(\nabla_XY,Z)+h(Y,\overline\nabla_XZ)$ for each $X,Y,Z\in\Gamma(TM)$;
    \vspace{0.2em}\item[(iv)] $\nabla^h=\frac{1}{2}\big(\nabla+\overline\nabla\big)$.
\end{enumerate}
\end{prop}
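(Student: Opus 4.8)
The plan is to establish the four assertions in order, each by differentiating the defining relations of the conormal map together with the structure equations $(\ref{eq:structure_equations_f})$ and $(\ref{eq:structure_equations_nu})$, using repeatedly that $(f,\xi)$ is equiaffine, so that $\tau\equiv 0$ and $D_X\xi=-f_*(S(X))$.

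For (i) I would differentiate the two identities $\nu(\xi)\equiv 1$ and $\nu(f_*X)\equiv 0$ along an arbitrary $Y\in\Gamma(TM)$, applying the Leibniz rule for the flat connections $D$ and $D^*$. The first gives $(\nu_*Y)(\xi)+\nu(D_Y\xi)=0$; since $D_Y\xi=-f_*(S(Y))$ lies in $f_*(T_pM)=\ker\nu_p$, the second term vanishes and $(\nu_*Y)(\xi)=0$. The second, after substituting $D_Yf_*X=f_*(\nabla_YX)+h(Y,X)\xi$, gives $(\nu_*Y)(f_*X)+h(Y,X)=0$. Non-degeneracy of $h$ then forces $\nu_*Y=0\Rightarrow Y=0$, so $\nu$ is an immersion.

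For (ii) I would evaluate the structure equation $D^*_X\nu_*Y=\nu_*(\overline\nabla_XY)-\overline h(X,Y)\nu$ at the vector $\xi\in\mathbb{R}^{n+1}$: by (i) and $\nu(\xi)=1$ the right-hand side collapses to $-\overline h(X,Y)$, while differentiating the identity $(\nu_*Y)(\xi)=0$ from (i) shows the left-hand side equals $-(\nu_*Y)(D_X\xi)=(\nu_*Y)\big(f_*(S(X))\big)=-h(Y,S(X))$. Comparing yields $\overline h(X,Y)=h(S(X),Y)$, and Lemma \ref{lem:CodazzihandS}(iii) supplies the remaining equality. For (iii) I would write $h(Y,Z)=-(\nu_*Z)(f_*Y)$, courtesy of (i), differentiate along $X$ by Leibniz, and substitute both structure equations; the terms $\overline h(X,Z)\nu(f_*Y)$ and $h(X,Y)(\nu_*Z)(\xi)$ vanish by (i), leaving precisely $X\cdot h(Y,Z)=h(\nabla_XY,Z)+h(Y,\overline\nabla_XZ)$.

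Finally, for (iv) I would set $\widehat\nabla:=\tfrac12(\nabla+\overline\nabla)$ and identify it with the Levi-Civita connection $\nabla^h$ by the usual uniqueness argument: $\widehat\nabla$ is torsion-free because $\nabla$ is torsion-free by hypothesis and $\overline\nabla$ is torsion-free as the connection induced by the centroaffine immersion $\nu$; and $\widehat\nabla h=0$ follows by averaging the identity (iii) with its image under the swap $Y\leftrightarrow Z$, using the symmetry of $h$. All of the computations are elementary; the only points demanding attention are the correct bookkeeping of $D^*$ acting on $(\mathbb{R}^{n+1})^*$-valued maps, the systematic use of $\tau\equiv 0$, and recording that $\overline\nabla$ is torsion-free — which I expect to be a matter of care rather than a genuine obstacle, there being no serious difficulty in this proposition.
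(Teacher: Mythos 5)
Your proposal is correct and follows essentially the standard argument: the paper does not reprove this proposition but cites \cite{nomizu1994affine}, where the proof is exactly this differentiation of the defining identities $\nu(\xi)=1$, $\nu\circ f_*=0$ and of $(\nu_*Y)(\xi)=0$, $(\nu_*Y)(f_*X)=-h(Y,X)$ against the structure equations, with the equiaffine condition $\tau\equiv 0$ and the $h$-symmetry of $S$ from Lemma \ref{lem:CodazzihandS} entering exactly as you use them. The Levi-Civita identification in (iv) via torsion-freeness of $\tfrac12(\nabla+\overline\nabla)$ and the averaged compatibility identity is likewise the standard route, so there is nothing to add.
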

\subsection{Convex projective geometry}\label{sec:convex_projective_manifolds}
We briefly recall the theory of convex subsets in projective space, their duals, and some fundamental results on convex projective manifolds (see \cite{benoist2001convexes, cooper2015convex, canary2021anosov} for instance). 
\\ \\
A subset $\Omega\subset\R\mathbb P^n$ is called \emph{convex} if there exists a projective line $l$ disjoint from $\Omega$ such that $\Omega\subset\R\mathbb P^n\setminus l\cong\mathbb A^n$ is convex in the usual sense. A convex subset $\Omega\subset\R\mathbb P^n$ is \emph{properly convex} if it is bounded in some affine chart and \emph{strictly convex} if its boundary $\partial\Omega$ does not contain any non trivial projective segment. Any such $\Omega$ determines a \emph{sharp convex cone} in $\R^{n+1}$ defined as $\mathcal C_+(\Omega):=\{(sx,s) \ | \ x\in\Omega, \ s>0\}$. By a sharp convex cone we mean a convex cone whose interior contains no affine line. In fact, the lift of $\Omega\subset\R\mathbb P^n$ to $\R^{n+1}\setminus\{0\}$ is given by $\mathcal{C}(\Omega):=\mathcal{C}_+(\Omega)\sqcup\mathcal C_-(\Omega)$ the full cone associated with $\Omega$, where $\mathcal C_-(\Omega)=\{(sx,s) \ | \ x\in\Omega, \ s<0\}$. It is also possible to define the \emph{dual sharp convex cone} as $$\mathcal{C}_+^*(\Omega) =\{ \varphi \in (\mathbb{R}^{n+1})^* \mid \varphi(x)> 0, \ \text{for all} \ x \in\mathcal C^+(\Omega)\}.$$  Using the identification of $(\mathbb{R}\mathbb{P}^n)^*$ with the Grassmannian of hyperplanes in $\mathbb{R}^{n+1}$, by sending a projective class of linear functionals to its kernel, we introduce the dual convex $\Omega^*\subset(\R\mathbb P^n)^*$ as the set of hyperplanes in $\mathbb{R}^{n+1}$ that are disjoint from $\overline{\Omega}$. In particular, its boundary $\partial \Omega^*$ becomes the set of supporting hyperplanes of $\Omega$. Moreover, if $\Omega$ is strictly convex, then $\partial\Omega^*$ is $C^1$ so that every point $p \in \partial \Omega$ admits a unique supporting hyperplane at $p$, which in turn corresponds to a unique point in $\partial \Omega^*$. 
\begin{defi}\label{def:strictly_convex_manifold}
A \emph{properly convex} (resp. \emph{strictly convex}) projective manifold $M$ of dimension $n$ is the quotient of a properly convex (resp. strictly convex) subset $\Omega\subset\R\mathbb P^n$ by the action of a discrete subgroup $\Gamma<\mathbb P\GL(n+1,\R)$ which preserves $\Omega$ and acts freely and properly discontinuously on it. 
\end{defi}
\noindent There is an induced action of $\mathbb P\GL(n+1,\R)$ on $(\R\mathbb P^n)^*$ given by $[N]\cdot[\varphi]:=[\varphi\circ N^{-1}]$, for $N\in\GL(n+1,\R)$ and $\varphi\in(\R^{n+1})^*$. In particular, given a properly convex projective manifold $M=\Omega/\Gamma$ we obtain the \emph{dual} properly convex projective manifold $M^*=\Omega^*/\Gamma^*$, where $\Gamma^*$ is a subgroup of $\mathbb P\GL(n+1,\R)$ isomorphic to $\Gamma$ acting on the dual projective space as just described. We will be mostly concerned with the case in which the action of $\Gamma$ on $\Omega$ is cocompact, so that the quotient $M$ becomes a closed smooth manifold. In this case we will say that $\Gamma$ \emph{divides} $\Omega$. Such an actions were studied by Benoist, in the case of $\Omega$ being strictly convex (\cite{benoist2001convexes, benoist2003convexes, benoist2005convexes, benoist2006convexes}), where he proved that, in the cocompact case, the group $\Gamma$ is word-hyperbolic and the inclusion $\Gamma \hookrightarrow \mathbb{P}\mathrm{GL}(n+1,\mathbb{R})$ satisfies a certain Anosov property (\cite{guichard2012anosov}). Our interest in this class of manifolds also stems from the following result, obtained through the contributions of several authors:
\begin{theorem}[\cite{cheng1977regularity, cheng1986complete, gigena1981conjecture, sasaki1980hyperbolic, li1990calabi, li1992calabi}]\label{thm:asymptotic_hyperbolic_sphere}
Given any properly convex subset $\Omega \subset \mathbb{R}^{n+1}$, there exists a unique convex, properly embedded hyperbolic affine sphere
$
f : H \to \mathbb{R}^{n+1},
$
with affine mean curvature equal to $-1$, having the vertex of $\mathcal{C}_+(\Omega)$ as its center, and asymptotic to the boundary $\partial \mathcal{C}_+(\Omega)$. Conversely, every properly immersed hyperbolic affine sphere
$
f : H \to \mathbb{R}^{n+1}
$
is in fact properly embedded and asymptotic to the boundary of the cone generated by the convex hull of $H$ and its center.
\end{theorem}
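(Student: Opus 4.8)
This is a classical theorem assembled from the work of Calabi, Cheng--Yau, Gigena, Sasaki and Li, so I only sketch a strategy. The plan is to reduce the whole statement — existence, uniqueness, and the converse — to a single \emph{singular Monge--Amp\`ere equation} on the convex base of the cone, and then to translate the PDE conclusions back through a radial-graph parametrization.

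First I would normalize coordinates. After a unimodular affine change of variables write $\mathcal C_+(\Omega)=\{(sx,s)\mid x\in\Omega,\ s>0\}$ with $\Omega\subset\mathbb R^n$ a bounded convex open set, and look for the affine sphere centered at the apex as the radial graph
$$
f_u\colon\Omega\to\mathbb R^{n+1},\qquad f_u(x)=-\frac{1}{u(x)}\,(x,1),
$$
for a smooth convex function $u\colon\Omega\to(-\infty,0)$ with $u\to0$ at $\partial\Omega$. For any such $u$ the position vector is automatically transverse to $f_u$, so putting $\xi:=f_u$ makes $f_u$ a centroaffine immersion; by Example \ref{ex:centro_affine} this forces $S=-\mathrm{Id}$, hence affine mean curvature $-1$, \emph{provided} $f_u$ is a Blaschke immersion, i.e. provided the induced volume form $\theta$ equals the volume form $\omega_h$ of the centroaffine metric. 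A direct computation of $\theta$ and $h$ in the coordinates $x$ then identifies the Blaschke condition $\theta=\omega_h$ with the equation
$$
\det\!\big(D^2u\big)=\Big(\frac{-1}{u}\Big)^{\,n+2}\ \text{ in }\Omega,\qquad u=0\ \text{ on }\partial\Omega,\qquad u\ \text{ convex},
$$
which I will call $(\star)$. (The check that the exponent $n+2$ is the one matching affine mean curvature $-1$ is immediate on the round cone, where $u=-\sqrt{1-|x|^2}$ gives the standard hyperboloid.)

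Next I would solve and analyse $(\star)$. For existence, exhaust $\Omega$ by smooth bounded strictly convex domains $\Omega_k\uparrow\Omega$ and solve on each the \emph{nondegenerate} Dirichlet problem $\det(D^2u_k)=(-u_k)^{-(n+2)}$ with $u_k\equiv-1/k$ on $\partial\Omega_k$, invoking the classical solvability and interior/boundary a priori estimates for Monge--Amp\`ere equations (Pogorelov, Cheng--Yau, Caffarelli--Nirenberg--Spruck); the comparison principle makes $(u_k)$ monotone and locally bounded, so it converges locally uniformly to a generalized solution $u$ of $(\star)$, and interior regularity for Monge--Amp\`ere upgrades $u$ to a smooth strictly convex solution, with barriers at $\partial\Omega$ giving $u\to0$ there. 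For uniqueness I would use the comparison principle directly: if $u_1,u_2$ both solve $(\star)$ and $\Omega':=\{u_1>u_2\}$ were nonempty, then on $\Omega'$ one has $-u_1<-u_2<0$, so $\det D^2u_2=(-u_2)^{-(n+2)}<(-u_1)^{-(n+2)}=\det D^2u_1$ while $u_1=u_2$ on $\partial\Omega'$, and the Alexandrov comparison principle forces $u_1\le u_2$ on $\Omega'$ — a contradiction. Transporting $(\star)$ back through $f_u$ yields the asserted convex, properly embedded hyperbolic affine sphere with center the apex of $\mathcal C_+(\Omega)$ and affine mean curvature $-1$; and since $-1/u(x)\to+\infty$ along the ray through $(x,1)$ as $x\to\partial\Omega$, the image $f_u(\Omega)$ accumulates exactly on $\partial\mathcal C_+(\Omega)$, i.e. it is asymptotic to it.

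Finally, for the converse: given a properly immersed hyperbolic affine sphere $f\colon H\to\mathbb R^{n+1}$ I would translate its center to the origin, so that $\xi=f$ is centroaffine, and use the Cheng--Yau structure theory to see that $f(H)$ lies in a sharp convex cone $\mathcal C$ with apex $0$, is a radial graph over the convex section $\Omega$ of $\mathcal C$, and so yields a convex solution of the interior part of $(\star)$; properness of $f$ is equivalent to completeness of the Blaschke metric, which forces $u\to0$ at $\partial\Omega$. By the uniqueness above, $f(H)$ must coincide with the affine sphere built over $\mathcal C_+(\Omega)=\mathcal C$ in the first part, hence is properly embedded and asymptotic to $\partial\mathcal C$, which is exactly the boundary of the cone generated by the convex hull of $H$ together with the center. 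The hard part throughout is purely analytic: solving and proving interior regularity for the \emph{singular} problem $(\star)$, whose right-hand side blows up precisely where the Dirichlet data are imposed, and establishing the equivalence ``completeness of the affine metric $\Leftrightarrow$ properness of $f$'' — this is where the delicate barrier constructions and a priori estimates of Cheng--Yau are indispensable, and there seems to be no way to avoid them.
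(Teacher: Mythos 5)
The paper does not prove this statement at all: it is quoted as a classical theorem with citations to Cheng--Yau, Gigena, Sasaki and Li, and no argument is given in the text. Your sketch follows exactly the route of those cited references — the reduction of existence and uniqueness to the singular Monge--Amp\`ere problem $\det D^2u=(-1/u)^{n+2}$ in $\Omega$, $u=0$ on $\partial\Omega$, via the radial-graph parametrization $f_u=-\tfrac1u(x,1)$ — so in that sense it is the ``same'' proof as the one the paper implicitly relies on, and the outline is essentially correct (your check on the round cone of the exponent $n+2$ is right). Two small remarks: in the uniqueness step the inequality should read $0<-u_1<-u_2$ on $\{u_1>u_2\}$ (you wrote $-u_1<-u_2<0$, a sign slip; the comparison you then apply is the correct one), and in the converse direction your argument is not a reduction to the uniqueness of $(\star)$ alone — knowing that a properly immersed hyperbolic affine sphere sits inside a sharp convex cone as a radial graph over a convex section, and that properness of $f$ is equivalent to completeness of the Blaschke metric, \emph{is} the Cheng--Yau structure theory being asserted, so that step is a citation rather than a derivation. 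Since the theorem is itself a literature citation in the paper, that level of deferral is appropriate, but it should be flagged as such rather than presented as following from the first part.
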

\noindent Properness of the immersion $f$ is equivalent to the completeness of the affine metric $h$ induced on the hyperbolic affine sphere (\cite{cheng1977regularity, cheng1986complete}). Moreover, it has been shown (\cite{shirokov1962affine}, see also \cite{gigena1978integral, gigena1981conjecture}) that the image of the conormal map
$
\nu:H\to(\mathbb R^{n+1})^*
$
(see Definition \ref{def:conormal_map}) of a hyperbolic affine sphere is another such hyperbolic affine sphere
$
H^*\subset(\mathbb R^{n+1})^*.
$ A fundamental property is that $\nu$ is an isometry for the affine metrics induced on $H$ and $H^*$, and it conjugates the affine connection $\nabla$ on $H$ to the affine connection $\bar\nabla$ on $H^*$. According to Theorem \ref{thm:asymptotic_hyperbolic_sphere}, the hyperbolic affine sphere $H^*$ is asymptotic to the dual cone $\mathcal{C}_+^*(\Omega)$, which allows us to identify $\Omega$ with its dual $\Omega^*$ via $\nu$. In the case where there exists a subgroup
$
\Gamma<\mathbb P\GL(n+1,\mathbb R)
$
dividing $\Omega$, with dual action $\Gamma^*$ induced on $\Omega^*$, the uniqueness statement in Theorem \ref{thm:asymptotic_hyperbolic_sphere} and the $\SL(n+1,\mathbb R)$-invariance of the affine normal together imply that $\nu$ is $\Gamma$-equivariant. In other words, we obtain the following result:
\begin{prop}\label{prop:isometry_dual_immersion}
Given a properly convex projective manifold $M=\Omega/\Gamma$ of dimension $n$, the conormal map $\nu$ with respect to the affine sphere structure induces a map to the dual manifold $M^*=\Omega^*/\Gamma^*$. Such a map is an isometry of the affine metrics and interchanges the two affine connections $\nabla$ and $\bar\nabla$.
\end{prop}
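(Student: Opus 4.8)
The plan is to lift everything to the hyperbolic affine spheres asymptotic to the cones $\mathcal C_+(\Omega)$ and $\mathcal C_+^*(\Omega)$, prove the statement there, and then descend it to the quotients using that the conormal map is $\Gamma$-equivariant; cocompactness of the action will play no role. First I would invoke Theorem \ref{thm:asymptotic_hyperbolic_sphere} to obtain the unique convex, properly embedded hyperbolic affine sphere $f:H\to\R^{n+1}$ with affine mean curvature $-1$, center at the vertex of $\mathcal C_+(\Omega)$, and asymptotic to $\partial\mathcal C_+(\Omega)$; then $H$ is a radial section of the cone (hence diffeomorphic to $\Omega$), the affine normal is the position vector $\xi=f$ — so $f$ is centroaffine in the sense of Example \ref{ex:centro_affine} — and $S=-\Id$. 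By the classical results cited in the preceding discussion, the conormal $\nu:H\to(\R^{n+1})^*$ is again a hyperbolic affine sphere, whose image $H^*$ is asymptotic to $\partial\mathcal C_+^*(\Omega)$ and hence is a radial section over $\Omega^*$. On $H$ the two assertions are essentially already in place: the affine connection and affine metric induced by the immersion $\nu$ are precisely $\bar\nabla$ and $-\bar h$ by the structure equations (\ref{eq:structure_equations_nu}), while $S=-\Id$ together with Proposition \ref{prop:properties_of_conormal_map}(ii) gives $\bar h(X,Y)=h(S(X),Y)=-h(X,Y)$, i.e. $-\bar h=h$; combined with the classical biduality of hyperbolic affine spheres recalled above, this shows that $\nu:H\to H^*$ is an isometry for the affine metrics and interchanges $\nabla$ and $\bar\nabla$.

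Next I would prove equivariance. The $\Gamma$-action on $\Omega$ lifts to a linear action on the cone $\mathcal C_+(\Omega)$ which, after normalizing the generators, is realised by the canonical lift $\Gamma\hookrightarrow\SL^\pm(n+1,\R)$ preserving $\mathcal C_+(\Omega)$; write $\tilde\gamma$ for the lift of $\gamma\in\Gamma$. Since the affine-normal construction — and hence the notion of a hyperbolic affine sphere with prescribed affine mean curvature and center — is invariant under $\SL^\pm(n+1,\R)$, the translate $\tilde\gamma\cdot f(H)$ is again a hyperbolic affine sphere asymptotic to $\partial\mathcal C_+(\Omega)$ with affine mean curvature $-1$ and center at the vertex; the uniqueness clause of Theorem \ref{thm:asymptotic_hyperbolic_sphere} forces $\tilde\gamma\cdot f(H)=f(H)$, so $f$ is $\Gamma$-equivariant (transporting the $\Omega$-action to $H$ along $H\cong\Omega$), and so is $\xi=f$. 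Feeding $f(\gamma\cdot p)=\tilde\gamma f(p)$ and $f_*(T_{\gamma\cdot p}H)=\tilde\gamma\,f_*(T_pH)$ into the defining relations $\nu_p(\xi_p)=1$ and $\nu_p\big(f_*(T_pH)\big)=0$, uniqueness of the functional $\nu_p$ yields $\nu_{\gamma\cdot p}=(\tilde\gamma^{-1})^{T}\nu_p$; that is, $\nu$ intertwines the $\Gamma$-action on $H$ with the dual action $\Gamma^*$ on $H^*\cong\Omega^*$. Hence $\nu$ descends to a well-defined smooth map $\bar\nu:M=H/\Gamma\to H^*/\Gamma^*\cong\Omega^*/\Gamma^*=M^*$.

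Finally, since $h$ and $\nabla$ are induced by the $\Gamma$-equivariant pair $(f,\xi)$ they are $\Gamma$-invariant and descend to $M$, and likewise $-\bar h$ and $\bar\nabla$, induced by the equivariant centroaffine immersion $\nu$, descend to $M^*$; the covering projections $H\to M$ and $H^*\to M^*$ are therefore a local isometry and a connection-preserving local diffeomorphism respectively, so the properties of $\nu$ established in the first paragraph pass verbatim to $\bar\nu$, which is exactly the assertion. I expect the only genuinely delicate point to be the equivariance step: one must pick the lift of $\Gamma$ to $\SL^\pm(n+1,\R)$ so that it simultaneously preserves $\mathcal C_+(\Omega)$ and the entire affine-sphere structure, and then check that the action it induces on $H^*$ is precisely the dual action $\Gamma^*$ used to define $M^*=\Omega^*/\Gamma^*$; granting this, the remainder is either tautological from the structure equations (\ref{eq:structure_equations_nu}) and Proposition \ref{prop:properties_of_conormal_map}, or a routine descent of local tensors through a covering.
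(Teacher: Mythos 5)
Your proposal is correct and takes essentially the same route as the paper: existence and uniqueness of the hyperbolic affine sphere asymptotic to $\mathcal C_+(\Omega)$ (Theorem \ref{thm:asymptotic_hyperbolic_sphere}), the classical duality facts that the conormal image is again a hyperbolic affine sphere with $\nu$ an isometry exchanging $\nabla$ and $\bar\nabla$, $\Gamma$-equivariance of $\nu$ deduced from the uniqueness statement together with the unimodular invariance of the affine normal, and finally descent to the quotients $M$ and $M^*$. The only difference is one of detail: you make explicit the identity $-\bar h=h$ coming from $S=-\Id$, the choice of lift of $\Gamma$ into $\SL^{\pm}(n+1,\R)$ preserving the cone, and the identification of the induced action on $H^*$ with the dual action $\Gamma^*$, all of which the paper leaves to the cited literature and to the reader.
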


\subsection{Pseudo-Riemannian space forms}\label{sec:hyperbolic_generale_p_q}
Here we introduce the preliminaries of pseudo-Riemannian geometry that will be needed later, together with the pseudosphere and the pseudohyperbolic space (see \cite{anciaux2011minimal} for a general discussion). %In Section \ref{sec:hyperbolic_case_p=q+1} we will present a different model of the aforementioned space when $p=q+1$, which will mark the first link with affine differential geometry.
\\ \\ 
Consider the space $\mathbb{R}^{p,q}$ endowed with the symmetric bilinear form $\langle v,w\rangle:=v_1w_1+\dots+v_p w_p - v_{p+1}w_{p+1}-\dots -v_{p+q}w_{p+q}
$
of signature $(p,q)$, with $p,q\ge 1$. Vectors in $\R^{p,q}$ split into three types: \begin{equation*} v \ \text{is} \ \begin{cases} \ \text{spacelike} \quad \text{if} \ \langle v,v\rangle >0, \\ \ \text{timelike} \quad \  \text{if} \ \langle v,v\rangle<0, \\ \ \text{isotropic} \quad  \text{if} \ \langle v,v\rangle=0.
    
\end{cases}\end{equation*}Similarly, a subspace $V\subset\R^{p,q}$ will be called spacelike, timelike, or isotropic depending on whether the bilinear form $\langle\cdot,\cdot\rangle$ restricted to $V$ is positive definite, negative definite, or identically zero. A simple linear algebra argument shows that the maximal dimension of a spacelike, timelike, or isotropic subspace is $p$, $q$, or $\min\{p,q\}$, respectively. The \emph{pseudosphere} of signature $(p-1,q)$ is defined as the quadric
$$
\mathbb{S}^{p-1,q}:=\{v\in\mathbb{R}^{p,q}\mid \langle v,v\rangle=1\},
$$ 
which can be endowed with a pseudo-Riemannian metric of signature $(p-1,q)$ induced by the bilinear form $\langle\cdot,\cdot\rangle$, and of constant sectional curvature equal to $1$. Indeed, for every $v\in\mathbb{S}^{p-1,q}$ we have a decomposition
$$
\mathbb{R}^{p,q}=T_v\mathbb{S}^{p-1,q}\oplus \mathbb{R}\cdot v,
$$ where $T_v\mathbb{S}^{p-1,q}$ is identified with the $\langle\cdot,\cdot\rangle$-orthogonal complement of $v$. In a similar way, we define the \emph{pseudohyperbolic space} of signature $(p,q-1)$ as the quadric $$\mathbb H^{p,q-1}:=\{v\in\mathbb{R}^{p,q}\mid \langle v,v\rangle=-1\}$$ which inherits a pseudo-Riemannian metric of signature $(p,q-1)$ and of constant sectional curvature $-1$, by the same process as the pseudosphere. \begin{remark}\label{rem:pseudo_sphere_anti_isometry}
By replacing the indefinite product $\langle\cdot,\cdot\rangle$ with $-\langle\cdot,\cdot\rangle$, one obtains an anti-isometry between $\mathbb{H}^{p,q-1}$ and $\mathbb{S}^{q-1,p}$. This allows us to refer to $\mathbb{S}^{q-1,p}$ as a pseudohyperbolic space, and conversely to $\mathbb{H}^{p,q-1}$ as a pseudosphere.
\end{remark} 
\noindent The \emph{projective model} $\hat{\mathbb{S}}^{p-1,q}$ of the pseudosphere space is defined as the quotient of $\mathbb{S}^{p-1,q}$ by the antipodal map $v\mapsto -v$, hence the space $\mathbb{S}^{p-1,q}$ is a $2:1$ cover of $\hat{\mathbb{S}}^{p,q}$. While $\mathbb{S}^{p-1,q}$ is orientable for every value of $p$ and $q$, the projective model is orientable if and only if $p+q-1$ is odd. We define the boundary at infinity of the pseudosphere as the subset\begin{equation*}
   \partial_{\infty}\hat{\mathbb S}^{p-1,q}:=\mathbb{P}\big(\{v\in\R^{p,q} \ | \ \langle v,v\rangle=0\}\big)\subset\R\mathbb P^{p+q-1},
\end{equation*}namely the projectivized cone of isotropic vectors. Such a space is often referred to as the \emph{Einstein Universe} and denoted by $\mathrm{Ein}^{p-1,q-1}$. \\ \\ The group of linear isometries of $(\mathbb{R}^{p,q},\langle\cdot,\cdot\rangle)$, which also acts by isometries on $\mathbb{S}^{p-1,q}$, is denoted by $\mathrm{O}(p,q)$ and consists of four connected components. The index-two subgroup formed by the orientation-preserving linear isometries is $\mathrm{SO}(p,q)$, which has two connected components. The component containing the identity, $\mathrm{SO}_0(p,q)$, acts on $\mathbb{R}^{p,q}$ preserving the orientation of both maximal positive and maximal negative subspaces. The subgroup of $\mathrm{SO}(p,q)$ stabilizing a positive line in $\mathbb{R}^{p,q}$ is isomorphic to $\mathrm{S}(\mathrm{O}(p-1,q)\times \mathrm{O}(1))$, and hence $\mathbb{S}^{p-1,q}$ can be realized as the pseudo-Riemannian symmetric space $$
\mathbb{S}^{p-1,q}\cong\mathrm{SO}(p,q)/\mathrm{S}(\mathrm{O}(p-1,q)\times \mathrm{O}(1)),
$$and with a similiar argument for the pseudohyperbolic space we obtain 
$$
\mathbb{H}^{p,q-1}\cong\mathrm{SO}(p,q)/\mathrm{S}(\mathrm{O}(p,q-1)\times \mathrm{O}(1)).
$$

\subsubsection{An isometric model}\label{sec:isometric_model}\hfill\vspace{0.3em}\\  
We will study the space $\mathbb{R}^{p,q}$ in the case $p=q$, and from now on we set $p=q=n+1$. Instead of considering the bilinear form $\langle\cdot,\cdot\rangle$ from Section \ref{sec:hyperbolic_generale_p_q}, in this case we may introduce another one, defined as: \begin{equation}\label{eq:bilinearform_b} b\big((x,y),(\tilde x,\tilde y)\big):=(x^T,y^T)B\begin{pmatrix}\tilde x \\ \tilde y\end{pmatrix}, \quad \text{where} \ B:=\frac{1}{2}\begin{pmatrix}
    0 & \Id \\ \Id & 0
\end{pmatrix}
\end{equation} which is again of signature $(n+1,n+1)$ given that $B$ is congruent to $\begin{psmallmatrix}
    \Id & 0 \\ 0 & -\Id
\end{psmallmatrix}$ via the matrix $P=\begin{psmallmatrix}
    \Id & -\Id \\ \Id & \Id
\end{psmallmatrix}$. Moreover, the squared norm of a vector $(x,y)$ in this model is given: $$b\big((x,y),(x,y)\big)=x^T\cdot y=x_1y_1+\dots+x_{n+1}y_{n+1}.$$ \begin{remark}
It is important to emphasize that, with the use of this bilinear form, the first and second factors of $\mathbb{R}^{n+1}$, parametrized respectively by $\{(x,0)\mid x\in\mathbb{R}^{n+1}\}$ and $\{(0,y)\mid y\in\mathbb{R}^{n+1}\}$, are maximal isotropic subspaces, that is, subspaces on which the restriction of $b$ is the zero bilinear form. We are thus decomposing the ambient space $\mathbb{R}^{n+1,n+1}$ as a direct sum of two maximal transverse isotropic subspaces, in contrast with the situation for the bilinear form $\langle\cdot,\cdot\rangle$ introduced in Section \ref{sec:hyperbolic_generale_p_q}, where the first and second factors of the ambient space are, respectively, positive and negative definite subspaces.
\end{remark}\noindent By taking the set of vectors $(x,y)\in\mathbb{R}^{n+1}\oplus\mathbb{R}^{n+1}$ such that $b\big((x,y),(x,y)\big)=1$, we obtain an isometric model $\mathbb{S}_b^{n,n+1}$ of the pseudosphere (see Section \ref{sec:hyperbolic_generale_p_q}), with explicit isometry given by \begin{equation}\begin{aligned}\label{eq:isometry_pseudo-spheres}
\Phi:\mathbb{S}^{n,n+1}&\longrightarrow \mathbb{S}_b^{n,n+1} \\
(x,y)\ &\mapsto \ \left(x+y,x-y\right).
\end{aligned}\end{equation} From now on, we shall identify $\mathrm{SO}(n+1,n+1)$ with the group of linear transformations of $\mathbb{R}^{n+1}\oplus\mathbb{R}^{n+1}$ preserving the bilinear form $b$ and the orientation of the space, and, with a slight abuse of notation, the space $\mathbb{S}_b^{n,n+1}$ will be denoted by $\mathbb{S}^{n,n+1}$. From what has been said in the previous section, we have a realization as a pseudo-Riemannian symmetric space $$\mathbb{S}^{n,n+1}\cong\mathrm{SO}(n+1,n+1)/\mathrm{S}(\mathrm{O}(n,n+1)\times \mathrm{O}(1))$$ where $\mathrm{SO}(n+1,n+1)=\{M\in\SL(2n+2,\R) \ | \ M^TBM=B\}$.
\begin{remark}\label{rem:changing_signature_p=q}
In light of the anti-isometry between $\mathbb{H}^{p,q-1}$ and $\mathbb{S}^{p-1,q}$ described in Remark \ref{rem:pseudo_sphere_anti_isometry}, we obtain an isomorphism between the groups $\mathrm O(p,q)$ and $\mathrm O(q,p)$ which, in the case $p=q=n+1$, merely interchanges the roles of spacelike and timelike subspaces without altering their maximal possible dimension.\end{remark}
\subsection{The principal $\R$-bundle structure}\label{sec:paracomplex_geometry}  An interesting feature in the neutral signature case $\R^{n+1,n+1}$ is that the isometry in (\ref{eq:isometry_pseudo-spheres}) can be reinterpreted using para-complex geometry. The set of para-complex numbers (\cite{cockle1849iii}), denoted with $\R_\tau$, is defined as the commutative algebra over $\R$ generated by $\{1,\tau\}$ where $\tau$ is a non-real number such that $\tau^2=1$. In particular, any number $z\in \R_\tau$ can be written as $z=x+\tau y$ for $x,y\in\R$. There is a notion of conjugate para-complex number $\bar z^{\tau}:=x-\tau y$ which permits to define the absolute value as $$\vl z \vl_\tau^2=z\bar z^{\tau}=x^2-y^2\in\R.$$ The main difference compared to complex numbers is that the absolute value $\vl\cdot\vl_\tau^2$ does not have a sign and can also be zero. In fact, it can be shown that para-complex numbers with zero absolute value are exclusively the zero divisors. Among these, the two numbers $$e_+:=\frac{1+\tau}{2},\qquad e_-:=\frac{1-\tau}{2},$$ known as idempotent units\footnote{In fact, an easy computation shows that $e_{\pm}\cdot e_\pm=e_\pm$}, enable the algebra $\R_\tau$ to be realized as a direct product: \begin{align*}
    \qquad\quad\R_\tau&=\R\oplus\tau\R\longrightarrow\R e_+\oplus\R e_- \\ & z=x+\tau y\longmapsto (x+y,x-y).
\end{align*}
In particular, since $e_+$ and $e_-$ are linearly independent, any arbitrary para-complex number $z=x+\tau y$ can also be expressed as $z=z^+e_++z^-e_-$, where $z^+:=x+y$ and $z^-:=x-y$. Notice that the space $\mathbb{R}^{n+1}_\tau$ consisting of $(n+1)$-tuples of para-complex numbers, may be identified with $\mathbb{R}^{n+1}e_+ \oplus \mathbb{R}^{n+1}e_-$ via the above component-wise decomposition. This process corresponds precisely to constructing the isometry (\ref{eq:isometry_pseudo-spheres}) on the space $\mathbb{R}^{n+1,n+1}$ between the bilinear form $\langle\cdot,\cdot\rangle$ and $b$. The subgroup of invertible numbers in $\R_\tau$ with absolute value equal to one $$\mathcal{U}:=\{z\in\R_\tau \ | \ \vl z\vl_\tau^2=1\}$$ is equal, as a set, to two hyperbolas in the real plane, and therefore to two copies of $\R$ as a group. This stands in stark contrast to the complex case, where the counterpart of the group $\mathcal U$ defined above is a copy of $S^1$, and is therefore compact. Nevertheless, it can be parametrized as $\mathcal{U}=\{\pm\cosh{t}+\tau\sinh{t} \ | \ t\in\R\}$ or, in the product algebra structure induced by the idempotent units $e_\pm$, we have $$\mathcal{U}=\{(z^+,z^-)\in\R e_+\oplus\R e_- \ | \ (z^+,z^-)=(e^t,e^{-t}) \ \text{or} \ (-e^{-t},-e^t), \ t\in\R\}.$$
Its action on an element $(x,y)\in\R^{n+1,n+1}$ is given by
$
\Psi_t(x,y) := (e^t x, e^{-t} y),
$
or, equivalently, in matrix form, by \begin{equation} \label{eq:matrix_invertible_elements} \Psi_t(x,y)=\begin{pmatrix}
   e^t\cdot\Id & 0 \\ 0 & e^{-t}\cdot\Id
\end{pmatrix}\begin{pmatrix}
    x \\ y
\end{pmatrix}=\Lambda_t\begin{pmatrix}
    x \\ y
\end{pmatrix}, \ \text{for} \ t\in\R.\end{equation}
The $1$-parameter subgroup generated by the matrices $\Lambda_t$ and $-\Lambda_{-t}$ in (\ref{eq:matrix_invertible_elements}) gives rise to an isomorphic copy of $\SO(1,1)$ inside $\SO(n+1,n+1)$ since it preserves the bilinear form $b((x,y),(x,y))=x^T\cdot y$, as an easy computation shows. In particular, the action of the copy of $\mathbb R \cong \SO_0(1,1)$ generated by $\{\Lambda_t\}_{t\in\mathbb R}$ acts properly and freely on $\mathbb S^{n,n+1}$ and on $\mathbb H^{\,n+1,n}$. The quotients
\(
\mathbb S^{n,n+1}/\mathbb R 
\qquad \text{and} \qquad 
\mathbb H^{\,n+1,n}/\mathbb R
\)
are smooth $2n$-dimensional manifolds, diffeomorphic to a para-Kähler manifold $\h_\tau^n$ (see Appendix \ref{appendix_B} for the definition), called the \emph{para-complex hyperbolic space}. In the literature it is generally defined in a different way, using a direct approach based on para-complex numbers (see \cite[\S 8.4]{trettel2019families} and \cite[Definition 2.1]{rungi2025complex}), but this definition is equivalent to ours.
In other words, the pseudosphere and the pseudohyperbolic space can be regarded as two principal $\mathbb R$-bundles over the same para-Kähler manifold. A general argument (Proposition \ref{prop:para_Sasaki_principal_bundle}) allows one to induce a new metric structure on the total space of the bundle from the para-Kähler metric on the base. This structure, referred to as \emph{para-Sasaki} (Definition \ref{def:para_Sasaki_metric}), is described explicitly in Corollary \ref{cor:para_sasaki_pseudo_sphere}. One of the most notable features is that the isometry group of the para-Kähler metric on the base is isomorphic to $\mathrm{GL}(n+1,\mathbb R)$, and the space itself is homogeneous with stabilizer $\mathrm{GL}(n,\mathbb R)\times\mathbb R^{*}$ (\cite[\S 2.2]{rungi2025complex}). Using this fact, one proves that the isometry group of the para-Sasaki metric on the total space is again isomorphic to $\mathrm{GL}(n+1,\mathbb R)$. In other words, we obtain the following:
\begin{lemma}\label{lem:para_sasaki_pseudosphere}
The pseudosphere $\mathbb{S}^{n,n+1}$ or, equivalently the pseudohyperbolic space $\mathbb{H}^{n+1,n}$, endowed with the para-Sasaki metric just described is a $\GL(n+1,\R)$-homogeneous space with stabilizer isomorphic to $\GL(n,\R)$.
 \end{lemma}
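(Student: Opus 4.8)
The plan is to produce an explicit faithful action of $\GL(n+1,\R)$ on $\mathbb{H}^{n+1,n}$ by para-Sasaki isometries, prove that it is transitive, and read off the stabilizer by a direct linear-algebra computation; the statement for $\mathbb{S}^{n,n+1}$ then follows from the anti-isometry of Remark \ref{rem:pseudo_sphere_anti_isometry}, with which the action will commute. Concretely, on $V=\R^{n+1}\oplus(\R^{n+1})^*$ I would set $\rho(N)(v,\varphi):=(Nv,\ \varphi\circ N^{-1})$ for $N\in\GL(n+1,\R)$. A one-line check gives $\hat\g\big(\rho(N)(v,\varphi),\rho(N)(w,\psi)\big)=\hat\g\big((v,\varphi),(w,\psi)\big)$, so $\rho$ is a faithful homomorphism into $\mathrm{O}(V,\hat\g)$, whose image is exactly the stabilizer of the ordered pair of transverse maximal isotropic subspaces $\big(\R^{n+1}\oplus 0,\ 0\oplus(\R^{n+1})^*\big)$. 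Since $\rho(N)$ preserves $\hat\g$ it preserves the quadric $\{\hat\g=-1\}=\mathbb{H}^{n+1,n}$ and acts there by isometries of the constant-curvature metric.

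Next I would check compatibility with the principal $\R$-bundle and the para-Sasaki structure. The fundamental vector field of the action $\Lambda_t(v,\varphi)=(e^t v,e^{-t}\varphi)$ is $\zeta_{(v,\varphi)}=(v,-\varphi)$, and since $\rho(N)$ is linear with differential $(w,\psi)\mapsto(Nw,\psi N^{-1})$ one gets $d\rho(N)\big(\zeta_{(v,\varphi)}\big)=\zeta_{\rho(N)(v,\varphi)}$, i.e.\ $\rho(N)$ preserves $\zeta$; likewise $\rho(N)\circ\Lambda_t=\Lambda_t\circ\rho(N)$, so $\rho(N)$ is an automorphism of the principal $\R$-bundle $\pi_-$ and descends to a transformation of $\h_\tau^n=\mathbb{H}^{n+1,n}/\R$. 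That descended transformation is the standard $\GL(n+1,\R)$-action on the para-complex hyperbolic space — via Proposition \ref{prop:diffeo_hyperbolic_incidence} — which preserves the para-Kähler metric. Finally, because $\rho(N)$ preserves $\hat\g$ and $\zeta$, hence the contact form $\eta=\hat\g(\cdot,\zeta)\big|_{T\mathbb{H}^{n+1,n}}$ and the horizontal distribution $\zeta^{\perp}$, and it covers a para-Kähler isometry, the construction of the para-Sasaki metric (Appendix \ref{appendix_B}, Proposition \ref{prop:para_Sasaki_principal_bundle} and Corollary \ref{cor:para_sasaki_pseudo_sphere}) — which assembles the metric on the total space purely from the base metric and the principal connection — forces $\rho(N)$ to be a para-Sasaki isometry.

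For transitivity and the stabilizer I would argue directly on $\mathbb{H}^{n+1,n}=\{(v,\varphi):\varphi(v)=-1\}$. Given two such pairs $(v,\varphi)$ and $(v',\varphi')$, the conditions $\varphi(v),\varphi'(v')\neq 0$ give splittings $\R^{n+1}=\R v\oplus\ker\varphi=\R v'\oplus\ker\varphi'$, so sending $v\mapsto v'$ together with any linear isomorphism $\ker\varphi\to\ker\varphi'$ defines $N\in\GL(n+1,\R)$ with $\rho(N)(v,\varphi)=(v',\varphi')$; hence the action is transitive. Taking the basepoint $(e_1,-e_1^{*})$, the equations $Ne_1=e_1$ and $e_1^{*}\circ N=e_1^{*}$ force the first column and the first row of $N$ to agree with those of the identity, so $\Stab(e_1,-e_1^{*})=\{\diag(1,N'):N'\in\GL(n,\R)\}\cong\GL(n,\R)$; the count $(n+1)^2-n^2=2n+1=\dim\mathbb{H}^{n+1,n}$ is consistent. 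This yields $\mathbb{H}^{n+1,n}\cong\GL(n+1,\R)/\GL(n,\R)$, and the same for $\mathbb{S}^{n,n+1}$, as claimed. To see that $\GL(n+1,\R)$ is moreover the \emph{full} isometry group, I would note that any para-Sasaki isometry preserves the canonical vector field $\zeta$, hence descends to a para-Kähler isometry of $\h_\tau^n$ (giving $\mathrm{Isom}\to\GL(n+1,\R)$), while a fibre-preserving isometry inducing the identity on the base must be a constant translation $\Lambda_{t_0}=\rho(e^{t_0}\Id)$, already in the image.

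The explicit computations (invariance of $\hat\g$ and of $\zeta$, and the stabilizer calculation) are routine. The one place needing genuine care — and where I expect the only real obstacle — is the middle step: making sure the para-Sasaki metric depends on nothing beyond the base para-Kähler metric and the choice of principal connection, so that a connection-preserving lift of a base isometry is automatically an isometry upstairs, and that the identification $\mathbb{H}^{n+1,n}/\R\cong\h_\tau^n$ of Proposition \ref{prop:diffeo_hyperbolic_incidence} is $\GL(n+1,\R)$-equivariant for the standard action on the right. Both are bookkeeping that must be tied precisely to Appendix \ref{appendix_B}, but neither carries analytic difficulty.
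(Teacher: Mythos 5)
Your proposal is correct and follows essentially the route the paper itself indicates: the paper does not spell out a proof of Lemma \ref{lem:para_sasaki_pseudosphere}, but justifies it via the explicit inclusion $\iota:\GL(n+1,\R)\hookrightarrow\SO(n+1,n+1)$ of Section \ref{sec:Lie_groups_inclusion} (your $\rho$ is exactly this action, the stabilizer of the pair of transverse maximal isotropic subspaces), the bundle construction of the para-Sasaki structure from Appendix \ref{appendix_B}, and the $\GL(n+1,\R)$-homogeneity of the base cited from \cite[\S 2.2]{rungi2025complex}. Your direct verification of transitivity and of the stabilizer $\{\mathrm{diag}(1,N'):N'\in\GL(n,\R)\}\cong\GL(n,\R)$ upstairs simply makes explicit what the paper deduces from the base (where the stabilizer is $\GL(n,\R)\times\R^*$, the $\R^*$ being absorbed by the fibre), and your invariance argument could even be shortened by noting that $\rho(N)$ preserves the ambient data $(\hat\g,\hat\p,\hat\ome)$ and the position field, hence each of $\eta,\zeta,\phi,g$ as written in Corollary \ref{cor:para_sasaki_pseudo_sphere}.
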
\noindent We shall see in the next section that this representation of $\mathbb S^{n,n+1}$ as a $\GL(n+1,\R)$-homogeneous space also admits a description in terms of the explicit inclusion of Lie groups $\GL(n+1,\R)\hookrightarrow \SO(n+1,n+1)$. %We summarize the various constructions relating the pseudohyperbolic space, the pseudosphere, and the para-complex hyperbolic space in the following diagram:\begin{equation}\label{eq:diagramm_projections}\begin{tikzcd}{\mathbb H^{n+1,n}} \arrow[r, "\pi_-"] \arrow[d, "2:1"'] & {\big(\mathbb H_\tau^n, K_{-\mathbf g,\mathbf{P}}<0\big)} \arrow[r, leftrightarrow, "\cong"] \arrow[d, "4:1"'] & {\big(\mathbb H_\tau^n, K_{\mathbf{g},\mathbf{P}}>0\big)} \arrow[d, "4:1"] & {\mathbb S^{n,n+1}} \arrow[d, "2:1"] \arrow[l, "\pi_+"'] \\{\hat{\mathbb H}^{n+1,n}} \arrow[r, "\hat\pi_-"]         & \hat{\mathbb H}_\tau^n \arrow[r, leftrightarrow, "\cong"]                                                      & \hat{\mathbb H}_\tau^n                                                     & {\hat{\mathbb S}^{n,n+1}} \arrow[l, "\hat\pi_+"']       \end{tikzcd}\end{equation}The maps $\pi_\pm$ and $\hat{\pi}_\pm$ denote the projections from the corresponding quadrics in $\mathbb{R}^{n+1,n+1}$ to the para-complex hyperbolic space: the former referring to the hyperboloid model, and the latter to the projective model. In particular, the projections $\pi_\pm$ define the structure of a principal $\mathbb{R} \cong \mathrm{SO}_0(1,1)$–bundle. The quantity $K_{\g,\p}$ represents the para-holomorphic sectional curvature, which changes sign under the anti-isometry sending $\g$ to $-\g$, or equivalently, at the level of $\mathbb{H}^{n,n+1}$ and $\mathbb{S}^{n,n+1}$, under the change of sign of the bilinear form $b$ defined in (\ref{eq:bilinearform_b}).

\subsubsection{Lie groups inclusion}\label{sec:Lie_groups_inclusion}\hfill\vspace{0.3em}\\
Using the bilinear form $b$ introduced in (\ref{eq:bilinearform_b}), one can explicitly write the inclusion $
\iota:\GL(n+1,\mathbb R)\hookrightarrow \SO(n+1,n+1).
$ Indeed, the space $(\mathbb R^{n+1,n+1},b)$ decomposes as the direct sum $V_+\oplus V_-$ of two maximal isotropic subspaces, and the action of $M\in\GL(n+1,\mathbb R)$ on an element $(v_+,v_-)$ is given by $
M\cdot (v_+,v_-) := (M v_+, (M^T)^{-1} v_-)
$. In other words, the above inclusion is given by \begin{equation}\label{eq:inclusion_GLnR}
\iota:M \longmapsto X:=\begin{pmatrix} M & 0 \\ 0 & (M^T)^{-1} \end{pmatrix}.
\end{equation} It is clear that $\det(X)=1$, and moreover $$
X^T B X = \frac{1}{2}\begin{pmatrix} M^T & 0 \\ 0 & M^{-1} \end{pmatrix} 
\begin{pmatrix} 0 & (M^T)^{-1} \\ M & 0 \end{pmatrix} = B.
$$ The subgroup $\GL(n+1,\R)<\SO(n+1,n+1)$ is acting as the stabilizer of pairs $(V_+,V_-)$ made of transverse maximal dimensional isotropic subspaces in $\R^{n+1,n+1}$. The group homomorphism $\iota$ respects also the inclusion of the corresponding maximal compact subgroups. In fact, the maximal compact in $\SO(n+1,n+1)$ is isomorphic to $\mathrm{S}\big(\mathrm O(n+1)\times\mathrm O(n+1)\big)$ and its inclusion, in terms of the model $(\R^{n+1,n+1}, b)$ is given by \begin{equation}(R,S)\longmapsto\frac{1}{2}\begin{pmatrix}
    R+S & R-S \\ R-S & R+S
\end{pmatrix},\end{equation} for $R,S\in\mathrm O(n+1)$ with $\det R=\det S$. On the other hand, by definition of $\iota$, given $M\in\GL(n+1,\R)$, we have $$\iota(M)=\begin{pmatrix} M & 0 \\ 0 & M \end{pmatrix} \iff M\in\mathrm O(n+1),$$where $\mathrm O(n+1)$ is an isomorphic copy of the maximal compact in $\GL(n+1,\R)$. Therefore, we deduce that $\iota$ sends the maximal compact of $\GL(n+1,\R)$ into $\mathrm S\big(\mathrm{O}(n+1)\times\mathrm O(n+1)\big)$ as the diagonal subgroup.
\begin{remark}
In terms of the pseudosphere $\mathbb S^{n,n+1}$ (respectively, the pseudohyperbolic space $\mathbb H^{n+1,n}$), the Lie group $\SO(n+1,n+1)$ acts by isometries only with respect to the pseudo-Riemannian metric $g$ of signature $(n,n+1)$ (respectively, $(n+1,n)$). In contrast, the subgroup $\GL(n+1,\R)$ can be interpreted as the isometry group of the para-Sasaki structure (see Corollary \ref{cor:para_sasaki_pseudo_sphere}).
\end{remark}

\section{The main construction}\label{sec:main_construction}

\subsection{Immersions in the pseudosphere}\label{sec:immersions_pseudo_spheres}
This constitutes one of the central parts of the paper, where we will explain in detail the relation between non-degenerate equiaffine immersions $f:M\to\R^{n+1}$ (not necessarily centroaffine) and non-degenerate immersions $\sigma^+:M\to\mathbb S^{n,n+1}$ compatible with the para-Sasaki metric (see Corollary \ref{cor:para_sasaki_pseudo_sphere}). %Such immersions, being tangent to the contact distribution, and therefore orthogonal to the Reeb vector field, can be projected onto $\mathbb H^{n}_\tau$, yielding non-degenerate Lagrangian immersions. %Conversely, every non-degenerate Lagrangian immersion in $\mathbb H^n_\tau$ admits a lift in $\mathbb S^{n,n+1}$ with the above-mentioned properties exc, and thus to an equiaffine immersion $f:M\to\R^{n+1}$. 
In the case where the immersion in $\mathbb S^{n,n+1}$ is maximal, this becomes equivalent to having %a maximal immersion in $\mathbb H^n_\tau$ and to 
a proper affine sphere in $(\R^{n+1})^*$. Two fundamental remarks are in order at this stage: first, an implicit approach has benn studied by Vrancken for centroaffine immersions (\cite{LucVrancken2002}). %\cite{vrancken1999centroaffine} using a tensorial formalism for immersions into $\mathbb S^{n,n+1}$, and in \cite{hildebrand2011cross} for immersions into $\mathbb H^n_\tau$ (see also \cite{dorfmeister2024half}) and 
Second, when the immersion $f:M\to\R^{n+1}$ is a hyperbolic affine sphere, so that $\sigma^+:M\to\mathbb S^{n,n+1}$ is timelike and maximal, the subspace $\mathrm d\sigma^+\big(TM\big)$ does not saturate the maximal possible dimension of negative definite subspaces of $T\mathbb S^{n,n+1}$, being $M$ of dimension $n$. Their anti-isometric counterpart are the $n$-dimensional spacelike maximal immersions $\sigma^-:M\to\mathbb H^{n+1,n}$, which differ substantially from those studied in \cite{CTT, labourie2023quasicircles,labourie2024plateau,seppi2023complete, moriani2024rigidity} and related to the study of Anosov representations in $\mathrm{SO}(p,q)$, with $p\le q$ (\cite{danciger2018convex,beyrer2023mathbb}).
\\ \\
We will now provide a brief introduction to the general theory of immersions into the pseudosphere or, equivalently, into the pseudohyperbolic space, with the necessary adjustments, before moving on to the specific construction of the maps $\sigma^+$ and $\sigma^-$ mentioned at the beginning of the section. An immersed $n$-dimensional manifold $\sigma: M \to \mathbb{S}^{n,n+1}$ will be called \emph{non-degenerate} whenever the induced metric $g_T:=\sigma^*g|_{TM}$ is non-degenerate. Moreover, it is said \emph{timelike} (resp. \emph{spacelike}) if $g_T$ is negative definite (resp. positive definite) and, in such a case, its normal bundle $NM:=(\sigma^*T\mathbb S^{n,n+1})^{\perp_{g}}$ is endowed with a positive definite (resp negative definite) metric $g_{N}$. Using the splitting $\sigma^*T\mathbb{S}^{n,n+1}=TM\perp_{g} N\Sigma$, the Levi-Civita connection on $\mathbb{S}^{n,n+1}$ decomposes as
\[
        \nabla^{g}=\begin{pmatrix}
            \nabla^{g_T} & B \\
            \II & \nabla^{N} 
        \end{pmatrix} \ ,
\]
where $\nabla^{g_T}$ is the Levi-Civita connection of the induced metric on $\sigma(M)$, $\II$ is a $1$-form with values in $\Hom(TM, NM)$ and it is called the \emph{second fundamental form}, $B\in \Omega^{1}(M, \Hom(NM, TM))$ is the \emph{shape operator}, and $\nabla^{N}$ is compatible with the metric $g_{N}$. Since $\nabla^{g}$ is torsion-free, the second fundamental form is actually symmetric, in the sense that
\[
    \II(X,Y)=\II(Y,X) \ \ \ \ \forall X,Y\in \Gamma(TM)  .
\]
Moreover, the second fundamental form and the shape operator are related by
\[
    g_{N}(\II(X,Y), \hat n) = -g_{T}(Y,B(X,\hat n))
\]
for all $X, Y \in \Gamma(TM)$ and normal vector fields $\hat n\in\Gamma(NM)$. 
\begin{remark}\label{rem:change_signature_induced_metric}
As explained in Remark \ref{rem:changing_signature_p=q}, there is an anti-isometry \begin{align*}\mathrm F:\big(&\R^{n+1,n+1}, b\big)\to\big(\R^{n+1,n+1}, b\big) \\ & \ \ \ \ \ \ \ (x,y)\longmapsto(-x,y)\end{align*}which induces a diffeomorphism between the quadrics $\mathbb{S}^{n,n+1}$ and $\mathbb{H}^{n+1,n}$, as it interchanges timelike and spacelike subspaces. In particular, if $\sigma : M \to \mathbb{S}^{n,n+1}$ is a non-degenerate immersion and the induced metric $g_T$ has signature $(k, n-k)$, then the new immersion obtained as $\mathrm{F} \circ \sigma : M \to \mathbb{H}^{n+1,n}$ is again non-degenerate, but now the induced metric has the opposite signature, being equal to $-g_T$.
\end{remark}
\begin{defi}\label{def:maximal} A non-degenerate immersion $\sigma: M \to \mathbb{S}^{n,n+1}$ is \emph{maximal} if its \emph{mean curvature} $\mathbf{H}_\sigma$ vanishes identically, namely if $\trace_{g_T}(\II)=0$.
\end{defi}

\noindent It follows easily from the definition that the immersion is maximal if and only if in a $g_T$-orthonormal frame $\{e_{1}, e_{2},\dots,e_n\}$ of $TM$, we have 
\begin{equation}
    \sum_{i=1}^n\epsilon_i\cdot\II(e_i,e_i)=0 \ ,
\end{equation}where $\epsilon_i:=g_T(e_i,e_i)$ is equal to plus or minus one, for $i=1,\dots,n$. Before proceeding, we must briefly introduce the special metric structure of \(\mathbb{S}^{\,n+1,n}\), which will then allow us to 
define additional properties for immersions into the pseudosphere (for more details see Appendix \ref{appendix_B}). A \emph{para-Sasaki metric} on $\mathbb{S}^{n,n+1}$ (Corollary \ref{cor:para_sasaki_pseudo_sphere}) consists of a tuple $(\eta,\zeta,\phi, g)$ where: \begin{itemize}
    \item[(i)] $\eta$ is a $1$-form such that $\eta\wedge(\mathrm d\eta)^n \neq 0$;\vspace{0.3em}
    
\item[(ii)] $\zeta$ is the unique vector field such that $\iota_\zeta(\mathrm d\eta)=\mathrm d\eta(\zeta,\cdot)=0$ and $\eta(\zeta)=1$. In particular, there is a decomposition
$T\mathbb{S}^{n,n+1} = \Ker \ \eta \oplus \mathbb R \zeta;$\vspace{0.3em}

\item[(iii)] $\phi$ is an endomorphism of $T\mathbb{S}^{n,n+1}$ such that $\phi(\zeta)=0$, $\eta\circ\phi=0$, $\phi^2(X)=X-\eta(X)\zeta$, and the restriction of $\phi$ to the (non-integrable) $2n$-dimensional distribution $\Ker \ \eta$ defines a para-complex structure (see Definition \ref{def:para_complex_structure});\vspace{0.3em}

\item[(iv)] $g$ is a pseudo-Riemannian metric of signature $(n,n+1)$ such that $g(\phi(X),\phi(Y))=-g(X,Y)+\eta(X)\eta(Y)$ and $\mathrm d\eta(X,Y)= g(X,\phi(Y))$;\vspace{0.3em}

\item[(v)] The tensor $[X, Y] + [\phi(X), \phi(Y)] - \phi\big([\phi(X), Y]\big) - \phi\big([X, \phi(Y)]\big)-\eta\big([X,Y]\big)\zeta-\mathrm{d\eta}(X,Y)$ vanishes for any $X,Y\in\Gamma(T\mathbb{S}^{n,n+1})$. 
\end{itemize} With this additional structure we can introduce a new class of immersions.
\begin{defi}\label{def:horizontal_phi_anti_invariant}
A non-degenerate immersion $\sigma:M\to\mathbb S^{n,n+1}$ is called \emph{horizontal $\phi$-anti-invariant} if: \begin{itemize}
    \item[(i)] for any $X\in\Gamma(TM)$ we have $\sigma_*(X)\in\mathcal H$ (\emph{horizontal}); \item[(ii)] for any $X,Y\in\Gamma(TM)$ we have $\phi\big(\sigma_*(X)\big)\perp_g\sigma_*(Y)$ (\emph{$\phi$-anti-invariant});\footnote{This definition comes from the similar setting of anti-invariant submanifolds in Sasaki spaces (\cite{yano1977anti,ishihara1979anti}).}
\end{itemize}
\end{defi}

\begin{remark}\label{rem:equivalent_anti_invariant}
Using point (v) of Definition  \ref{def:para_metric_contact}, the property of being $\phi$-anti-invariant can be rephrased using the contact geometry of $\mathbb S^{n,n+1}$ since $$
g\big(\sigma_*(X),\phi(\sigma_*(Y))\big)=\mathrm d\eta\big(\sigma_*(X),\sigma_*(Y)\big).
$$ As a consequence, we obtain that $\sigma$ is $\phi$-anti-invariant if and only if $
\mathrm d\eta\big(\sigma_*(X),\sigma_*(Y)\big)=0
$, for any $X,Y\in\Gamma(TM)$. If the immersion $\sigma$ is horizontal as well, then the horizontal distribution can be decomposed as
$
\mathcal H \;=\; \sigma_*(TM)\;\oplus\;\p\big(\sigma_*(TM)\big),
$
given that the restriction of $\phi$ to $\mathcal{H}$ defines a para-complex structure $\p$ (see Definition \ref{def:para_metric_contact}).
\end{remark}
\noindent In what follows, we shall adopt the model $
\R^{n+1,n+1}=\R^{n+1}\oplus(\R^{n+1})^*,
$ endowed with the para-Kähler structure $(\hat\p,\hat\g,\hat\ome)$, where
$$
\hat\p=(\Id,-\Id), \qquad 
\hat\g\big((v,\varphi),(w,\psi)\big)=\tfrac{1}{2}\big(\varphi(w)+\psi(v)\big),
$$ and
$
\hat\ome\big((v,\varphi),(w,\psi)\big)=\hat\g\big((v,\varphi),\hat\p(w,\psi)\big)=\tfrac{1}{2}\big(\varphi(w)-\psi(v)\big)
$. Recall that, in this model, the pseudosphere $\mathbb S^{n,n+1}$ is obtained as the quadric $\{(v,\varphi)\in\R^{n+1}\oplus(\R^{n+1})^*\ | \ \varphi(v)=1\}$ and, similarly, for $\mathbb H^{n+1,n}$ the condition is $\varphi(v)=-1$. Moreover, we can identify the dual space $(\mathbb{R}^{n+1})^*$ with $\mathbb{R}^{n+1}$ via a fixed non-degenerate symmetric bilinear form $Q$, that is,$$\begin{aligned}(\mathbb{R}^{n+1})^* &\longleftrightarrow \mathbb{R}^{n+1} \\\varphi &\longmapsto v_\varphi\end{aligned}$$where $v_\varphi$ is the unique vector such that $\varphi(w) = v_\varphi^t Q w$. Under this identification, it is clear that the bilinear form $\hat\g((v,\varphi),(w,\psi))=\frac{1}{2}\big(\varphi(w)+\psi(v)\big)$ corresponds exactly to the form $b$ introduced in (\ref{eq:bilinearform_b}) whenever $Q = \mathrm{Id}$. \\ \\ Given a non-degenerate equiaffine immersion $f:M\to\R^{n+1}$ and its conormal map $\nu:M\to(\R^{n+1})^*$ (see Section \ref{sec:affine_differential_geometry}), we can define a map \begin{equation}\begin{aligned}\label{eq:immersion_sigma}
    \sigma^+: \ &M\to\R^{n+1}\oplus(\R^{n+1})^* \\ &
p\longmapsto\sigma^+(p):=(\xi_p,\nu_p)
\end{aligned}\end{equation}
where $\xi_p$ denotes the transversal vector to $f$ at the point $p$, or, with a slight abuse of notation, $\sigma^+=(\xi,\nu)$.

\begin{theorem}\label{prop:immersion_sigma}
The map $\sigma^+:M\to\R^{n+1}\oplus(\R^{n+1})^*$ has the following properties: \begin{itemize}
    \item[(i)] the image of $\sigma^+$ is contained in $\mathbb S^{n,n+1}$;
    \item[(ii)] $\sigma^+$ is an immersion;
    \item[(iii)] the induced metric $g_T=(\sigma^+)^*g|_{TM}$ coincides with $\bar h=h\big(S(\cdot),\cdot\big)$, namely the induced affine metric on the dual immersion up to a sign;
    \item[(iv)] $\sigma^+$ is horizontal and $\phi$-anti-invariant.
\end{itemize}
\end{theorem}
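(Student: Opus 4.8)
The plan is to check the four assertions in turn; the only ingredients are the equiaffine structure equations \eqref{eq:structure_equations_f} for $(f,\xi)$, the conormal structure equations \eqref{eq:structure_equations_nu}, the properties of $\nu$ collected in Proposition \ref{prop:properties_of_conormal_map}, the $h$-symmetry of the shape operator from Lemma \ref{lem:CodazzihandS}(iii), and the explicit description of the para-Sasaki data on $\mathbb{S}^{n,n+1}$ from Corollary \ref{cor:para_sasaki_pseudo_sphere} (Appendix \ref{appendix_B}). The computational backbone is the identity $\mathrm{d}\sigma^+(X)=(D_X\xi,\,D^*_X\nu)=(-f_*(S(X)),\,\nu_*(X))$, obtained by differentiating $\sigma^+=(\xi,\nu)$ in the linear coordinates of $V=\R^{n+1}\oplus(\R^{n+1})^*$ and inserting the two systems of structure equations (with $\tau=0$, since $(f,\xi)$ is equiaffine).

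Claim (i) is immediate, since $\mathbb{S}^{n,n+1}=\{(v,\varphi)\mid\varphi(v)=1\}$ in this model and $\nu_p(\xi_p)=1$ is the defining normalization of the conormal map (Definition \ref{def:conormal_map}). For (ii), the second component of $\mathrm{d}\sigma^+(X)$ is $\nu_*(X)$, and since $h$ is non-degenerate the conormal map $\nu$ is itself an immersion by Proposition \ref{prop:properties_of_conormal_map}(i); hence $\nu_*$ is injective and so is $\mathrm{d}\sigma^+$. For (iii), I expand $g_T(X,Y)=\hat\g\big(\mathrm{d}\sigma^+(X),\mathrm{d}\sigma^+(Y)\big)$ using $\hat\g\big((v,\varphi),(w,\psi)\big)=\tfrac12(\varphi(w)+\psi(v))$ and the formula $\nu_*(X)(f_*(Z))=-h(X,Z)$ of Proposition \ref{prop:properties_of_conormal_map}(i); this gives $\tfrac12\big(h(X,S(Y))+h(S(X),Y)\big)$, which equals $h(S(X),Y)=\bar h(X,Y)$ by Lemma \ref{lem:CodazzihandS}(iii) and Proposition \ref{prop:properties_of_conormal_map}(ii). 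Since the dual affine metric read off from \eqref{eq:structure_equations_nu} is $-\bar h$, this is exactly the induced affine metric of $\nu$ up to a sign.

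For (iv) I first record the para-Sasaki tuple $(\eta,\zeta,\phi,g)$ on $\mathbb{S}^{n,n+1}$ in ambient terms: at $x=(v,\varphi)$ the vertical field — the infinitesimal generator of $\Psi_t(v,\varphi)=(e^tv,e^{-t}\varphi)$ — is $\zeta_x=\hat\p(x)=(v,-\varphi)$, the horizontal distribution is $\mathcal H_x=\Ker\eta_x=\{u\in T_x\mathbb{S}^{n,n+1}\mid\hat\ome(x,u)=0\}$, the endomorphism $\hat\p$ preserves $\mathcal H$ with $\phi|_{\mathcal H}=\hat\p|_{\mathcal H}$, and $\mathrm d\eta$ restricted to $T\mathbb{S}^{n,n+1}$ is a nonzero multiple of $\hat\ome$. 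Horizontality of $\sigma^+$ is then the equation $\hat\ome\big(\sigma^+(p),\mathrm{d}\sigma^+(X)\big)=0$; using $\hat\ome\big((v,\varphi),(w,\psi)\big)=\tfrac12(\varphi(w)-\psi(v))$ this becomes $\tfrac12\big(-\nu(f_*(S(X)))-\nu_*(X)(\xi)\big)$, where the first term vanishes because $\nu$ annihilates $f_*(T_pM)$ (Definition \ref{def:conormal_map}) and the second by Proposition \ref{prop:properties_of_conormal_map}(i). Granting horizontality, $\phi$-anti-invariance reads $\hat\g\big(\hat\p(\mathrm{d}\sigma^+(X)),\mathrm{d}\sigma^+(Y)\big)=-\hat\ome\big(\mathrm{d}\sigma^+(X),\mathrm{d}\sigma^+(Y)\big)=0$ (equivalently, by Remark \ref{rem:equivalent_anti_invariant}, the vanishing of $\mathrm d\eta$ on the image of $\mathrm{d}\sigma^+$), and expanding gives $\tfrac12\big(h(X,S(Y))-h(Y,S(X))\big)$, which is zero by Lemma \ref{lem:CodazzihandS}(iii) together with the symmetry of $h$.

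The one genuinely non-mechanical point is the opening sentence of the last step: translating the abstract para-Sasaki data on $\mathbb{S}^{n,n+1}$ into the concrete objects $\hat\p(x)$, $\{u\mid\hat\ome(x,u)=0\}$, and $\hat\ome|_{T\mathbb{S}^{n,n+1}}$, i.e.\ identifying the contact form, the Reeb-type field, the para-complex endomorphism, and $\mathrm d\eta$ built from the position vector field. Once that dictionary is in place — which is exactly the content of Appendix \ref{appendix_B} and Corollary \ref{cor:para_sasaki_pseudo_sphere} — each of (i)--(iv) collapses to a one-line computation involving the two systems of structure equations and the single identity $h(S(\cdot),\cdot)=h(\cdot,S(\cdot))$.
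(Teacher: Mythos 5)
Your proposal is correct and follows essentially the same route as the paper's proof: the same backbone identity $\mathrm d\sigma^+(X)=(-f_*(S(X)),\nu_*(X))$, the same use of Definition \ref{def:conormal_map}, Proposition \ref{prop:properties_of_conormal_map} and the $h$-symmetry of $S$ for (i)--(iii), and the same dictionary from Corollary \ref{cor:para_sasaki_pseudo_sphere} (Reeb field $\hat\p(\xi,\nu)=(\xi,-\nu)$, Liouville/position vector, $\mathrm d\eta=j^*\hat\ome$) for (iv). The only cosmetic difference is that you test horizontality via $\hat\ome(\sigma^+,\mathrm d\sigma^+(X))=0$ while the paper writes the equivalent condition $\hat\g(\sigma^+_*(X),(\xi,-\nu))=0$; these coincide since $\hat\ome(x,\cdot)=-\hat\g(\hat\p x,\cdot)$.
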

\begin{proof}
Points (i) and (ii) are straightforward to verify since
$$
\hat\g(\sigma,\sigma)=\hat\g\big((\xi,\nu),(\xi,\nu)\big)=\frac{1}{2}\big(\nu(\xi)+\nu(\xi)\big)=1
$$
(see Definition \ref{def:conormal_map}), and hence $\sigma^+(M)\subset \mathbb S^{n,n+1}$. In the following, we will denote by $\sigma^+_*(X)$ the differential of $\sigma^+$ applied to a vector field $X$ along $M$. Using the structure equations for $f$ and $\nu$, we have
$
\sigma_*^+(X) = \big(D_X \xi, D^*_X \nu\big) = \big(-f_*(S(X)), \nu_*(X)\big).
$
Therefore, $\sigma_*^+(X)=0$ if and only if $f_*(SX) = \nu_*(X) = 0$. Since $\nu$ is an immersion (Proposition \ref{prop:properties_of_conormal_map}), we conclude that the differential of $\sigma^+$ is injective. \newline Regarding point (iii), we need to compute $\hat\g(\sigma_*^+(X),\sigma_*^+(Y))$ for $X,Y\in \Gamma(TM)$. Using the definitions of $\hat\g$ and $\sigma_*^+$, we have
\begin{align*}
    \hat\g(\sigma_*^+(X),\sigma_*^+(Y))&=\hat\g\big((-f_*(SX), \nu_*(X)),(-f_*(SY), \nu_*(Y))\big) \\ &=\frac{1}{2}\Big(\nu_*(X)\big(-f_*(SY)\big)+\nu_*(Y)\big(-f_*(SX)\big)\Big) \\ &=\frac{1}{2}\Big(h(SY,X)+h(Y,SX)\Big) \tag{Proposition \ref{prop:properties_of_conormal_map}} \\ &=\overline{h}(X,Y) \tag{$S$ is $h$-symmetric}
\end{align*}where we recall that $-\overline{h}$ is the affine metric induced on the dual immersion (see Equation (\ref{eq:structure_equations_nu})). \newline 
Thanks to Remark \ref{rem:contact_geometry_hypersurfaces} and to the explicit construction of the contact form $\eta$ on $\mathbb S^{n,n+1}$, we know that for a map $\sigma^+$ as above, the Reeb vector field $\zeta$ coincides with $\hat\p(\xi,\nu)=(\xi,-\nu)$. Moreover, the vector field in (\ref{eq:Liouville_vector_field_pseudosphere}), which is transverse to the quadric $\mathbb S^{n,n+1}$, is represented by the position vector $(\xi,\nu)$ in the model we are using. From this it is clear that $\eta((\xi,-\nu))=\hat\ome\big((\xi,\nu),(\xi,-\nu)\big)=1$ (see Proposition \ref{prop:contact_submanifolds}) and that $$
\hat\g\big(\sigma_*^+(X),(\xi,-\nu)\big)= \hat\g\big((-f_*(SX),\nu_*X),(\xi,-\nu)\big)=\tfrac{1}{2}\big(\nu_*(X)(\xi)+\nu(f_*(SX))\big).
$$ The last term in the equation vanishes as a consequence of Definition \ref{def:conormal_map} and item (i) of Proposition \ref{prop:properties_of_conormal_map}. Therefore, the immersion $\sigma^+$ is horizontal. Finally, in order to prove that it is $\phi$-anti-invariant we will equivalently prove that $\mathrm d\eta(\sigma_*^+(X),\sigma_*^+(Y))=0$, for any $X,Y\in\Gamma(TM)$ according to Remark \ref{rem:equivalent_anti_invariant}. By construction, $\mathrm d\eta=j^*\hat\ome$ (see Proposition \ref{prop:contact_submanifolds}), where $j:\mathbb S^{n,n+1}\hookrightarrow\R^{n+1}\oplus(\R^{n+1})^*$ is the inclusion. But since $\hat\ome(j_*\sigma_*^+(X),j_*\sigma_*^+(Y))=\hat\g(j_*\sigma_*^+(X),\hat\p j_*\sigma_*^+(Y))$, a straightforward computation as above shows that this last term vanishes, yielding the claim.
\end{proof}
\begin{remark}
By replacing the transversal vector $\xi$ with $-\xi$, we get a different immersion $\sigma^-:=(-\xi,\nu)$ whose image is now contained in $\mathbb H^{n+1,n}$. In other words, one obtains the equivalent immersion via the anti-isometry $F$ introduced in Remark \ref{rem:change_signature_induced_metric} between $\mathbb S^{n,n+1}$ and $\mathbb H^{n+1,n}$. In particular, $\sigma^-$ has the same properties as $\sigma^+$ proved in Theorem \ref{prop:immersion_sigma}, with the sole exception that the metric induced on $\sigma^-(M)$ now coincides precisely with the affine metric $-\bar h(\cdot,\cdot)=-h\big(S(\cdot),\cdot\big)$.

\end{remark}
\noindent We shall continue to focus on the immersion $\sigma^+$, since the details for $\sigma^-$ can be easily adapted. Notice that the induced metric $g_T$ coincides with the affine metric of the dual immersion up to a sign, hence the immersion $\sigma^+$ defined in (\ref{eq:immersion_sigma}) is not necessarily non-degenerate. Indeed, from the relation $\overline{h}(\cdot,\cdot)=h\big(S(\cdot),\cdot\big)$ it follows that $\sigma^+$ is non-degenerate if and only if $\det(S)\neq 0$, where we recall that $S$ is the affine shape operator of $f$. When this condition holds, both $h$ and $\overline h$ are, a priori, pseudo-Riemannian metrics. Moreover, the normal bundle of the immersion $\sigma^+$ is given by
$
NM = \mathbf{P}(TM)\oplus \R\,\mathbf{P}\sigma^+,
$ where we identify the Reeb vector field $\zeta$, evaluated at the point $\sigma^+(p)$, with $\mathbf{P}(\sigma^+(p))=(\xi_p,-\nu_p)$, as $p$ varies in $M$. A priori, the mean curvature is a section of the normal bundle of the immersion, but as we shall see in next result, in the special case of $\sigma^+$, its mean curvature $\mathbf{H}_{\sigma^+}$ does not have component along $\p\sigma^+$ and is directly related with some affine invariants of the dual immersion. 
\begin{lemma}\label{lem:mean_curvature_sigma}
Suppose that $\sigma^+=(\xi,\nu):M\to\mathbb S^{n,n+1}$ is non-degenerate, then its mean curvature is given by $$\mathbf H_{\sigma^+}=-\frac{1}{2n}\sum_{k=1}^n\epsilon_k\cdot \trace_{-\bar h}(\bar C)(e_k)\p\big(\sigma_*(e_k)\big),$$ where $\bar C=\bar\nabla\bar h$ is the Pick tensor of the dual immersion and $\{e_1,\dots,e_n\}$ is a $g_T=\bar h$-orthonormal local basis, with $\epsilon_i:=g_T(e_i,e_i)$.
\end{lemma}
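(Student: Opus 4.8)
\noindent The plan is to compute the second fundamental form $\II$ of $\sigma^+$ in $\mathbb S^{n,n+1}$ by hand, differentiating the tangent fields $\sigma^+_*(Y)$ with the flat connection $D$ of the ambient space $V=\R^{n+1}\oplus(\R^{n+1})^*$ and reading off the normal component. Since $\mathbb S^{n,n+1}$ is the quadric $\{\hat\g(\cdot,\cdot)=1\}$ with position vector $\sigma^+$, along the immersion one has $\nabla^g_XY=D_XY-\hat\g(X,Y)\,\sigma^+$; as $\sigma^+$ is $\hat\g$-orthogonal to the normal bundle $NM=\p(TM)\oplus\R\zeta$ (here $\zeta=\hat\p\sigma^+=(\xi,-\nu)$ is the Reeb field, cf.\ the proof of Theorem \ref{prop:immersion_sigma}), the form $\II(X,Y)$ is precisely the $NM$-component of $D_X\sigma^+_*(Y)$.

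\noindent First I would use $\sigma^+_*(X)=(-f_*(SX),\nu_*(X))$ together with the structure equations \eqref{eq:structure_equations_f} and \eqref{eq:structure_equations_nu} to get
\[
D_X\sigma^+_*(Y)=\Big(-f_*\big(\nabla_X(SY)\big)-h(X,SY)\,\xi,\ \ \nu_*\big(\bar\nabla_XY\big)-\bar h(X,Y)\,\nu\Big).
\]
Because $\det(S)\neq0$, the map $\sigma^+$ is an immersion and along $\sigma^+(M)$ the ambient space decomposes as $\sigma^+_*(TM)\oplus\p\sigma^+_*(TM)\oplus\R\zeta\oplus\R\sigma^+$, where $\p$ restricted to the horizontal distribution is the endomorphism induced by $\hat\p=(\Id,-\Id)$, so that $\p\sigma^+_*(Z)=(-f_*(SZ),-\nu_*(Z))$. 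Using the identities $(-f_*(SZ),0)=\tfrac12(\sigma^+_*(Z)+\p\sigma^+_*(Z))$, $(0,\nu_*(Z))=\tfrac12(\sigma^+_*(Z)-\p\sigma^+_*(Z))$, $(\xi,0)=\tfrac12(\sigma^++\zeta)$ and $(0,\nu)=\tfrac12(\sigma^+-\zeta)$, I would expand the displayed expression in this basis. The $\R\sigma^+$-coefficient comes out as $-\tfrac12(h(X,SY)+\bar h(X,Y))=-\bar h(X,Y)=-g_T(X,Y)$, as it must be on a quadric, while the $\R\zeta$-coefficient equals $\tfrac12(\bar h(X,Y)-h(X,SY))$, which vanishes identically since $\bar h(X,Y)=h(SX,Y)=h(X,SY)$ by Proposition \ref{prop:properties_of_conormal_map}(ii) (equivalently, this is exactly where horizontality and $\phi$-anti-invariance of $\sigma^+$ are used). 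Hence
\[
\II(X,Y)=\tfrac12\,\p\sigma^+_*\big(W(X,Y)\big),\qquad W(X,Y):=S^{-1}\nabla_X(SY)-\bar\nabla_XY,
\]
where $W$ is symmetric by the Codazzi equation for $S$ (Lemma \ref{lem:CodazzihandS}), matching the symmetry of $\II$. In particular $\II$, and hence $\mathbf H_{\sigma^+}$, has no $\p\sigma^+$-component, as announced before the lemma.

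\noindent Next I would identify $W$ with the Pick tensor of the dual immersion. Using $\bar h(S^{-1}V,Z)=h(V,Z)$ one gets $\bar h\big(W(X,Y),Z\big)=h\big(\nabla_X(SY),Z\big)-\bar h(\bar\nabla_XY,Z)$, and Proposition \ref{prop:properties_of_conormal_map}(iii) applied to $h(SY,Z)=\bar h(Y,Z)$ gives $h\big(\nabla_X(SY),Z\big)=X\cdot\bar h(Y,Z)-\bar h(Y,\bar\nabla_XZ)$; combining these, $\bar h\big(W(X,Y),Z\big)=(\bar\nabla_X\bar h)(Y,Z)=\bar C(X,Y,Z)$, so $W(X,Y)$ is the $g_T$-dual vector of $\bar C(X,Y,\cdot\,)$. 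Finally, in a $g_T=\bar h$-orthonormal frame $\{e_1,\dots,e_n\}$ with $\epsilon_k=\bar h(e_k,e_k)$, I set $V:=\sum_k\epsilon_k W(e_k,e_k)$, so that $n\,\mathbf H_{\sigma^+}=\sum_k\epsilon_k\II(e_k,e_k)=\tfrac12\p\sigma^+_*(V)$. Expanding $V=\sum_j\epsilon_j\,\bar h(V,e_j)\,e_j$, using total symmetry of $\bar C$ and the definition \eqref{eq:trace_Pick_tensor} of the trace with respect to $-\bar h$ (for which the same frame is orthonormal with signs $-\epsilon_k$), one gets $\bar h(V,e_j)=\sum_k\epsilon_k\bar C(e_j,e_k,e_k)=-\trace_{-\bar h}(\bar C)(e_j)$; hence $V=-\sum_j\epsilon_j\trace_{-\bar h}(\bar C)(e_j)\,e_j$ and the stated formula for $\mathbf H_{\sigma^+}$ drops out.

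\noindent The step I expect to be the main obstacle is the four-term decomposition of $D_X\sigma^+_*(Y)$: carrying it out without slips, and in particular checking that the $\zeta$-component genuinely cancels — this cancellation is precisely what forces $\mathbf H_{\sigma^+}$ to take values in $\p(TM)$. The rest is routine, the only delicate point being to keep track of the sign hidden in the symbol $\trace_{-\bar h}$ while all computations are naturally carried out with the non-degenerate metric $\bar h=g_T$.
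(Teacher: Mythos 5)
Your proof is correct and follows essentially the same route as the paper's: compute $D_X\sigma^+_*(Y)$ from the structure equations of $f$ and $\nu$, project onto the normal bundle $\p\big(\sigma^+_*(TM)\big)\oplus\R\,\zeta$, observe that the $\zeta$-component cancels (so $\mathbf H_{\sigma^+}$ lies in $\p(TM)$), identify the remaining coefficients with $\bar C$ via Proposition \ref{prop:properties_of_conormal_map}, and finish using total symmetry of $\bar C$ and the definition of $\trace_{-\bar h}$ — your only addition being the closed tensorial formula $\II(X,Y)=\tfrac12\,\p\sigma^+_*\big(S^{-1}\nabla_X(SY)-\bar\nabla_XY\big)$, which the paper states only on the diagonal. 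The single slip is the unused Gauss formula in your opening, which should read $\nabla^g_XY=D_XY+\hat\g(X,Y)\,\sigma^+$; since your later computation gives the $\R\sigma^+$-coefficient as $-\bar h(X,Y)$, consistent with the correct sign, nothing downstream is affected.
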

\begin{proof}
Let $\{e_1,\dots,e_n\}$ be a local orthonormal basis with respect to the induced metric $g_T$, which is given by $\bar h$ according to Theorem \ref{prop:immersion_sigma}, and let $\epsilon_i:=\bar h(e_i,e_i)$ equal to plus or minus one. The mean curvature tensor is given, by definition $$\mathbf H_{\sigma^+}=\trace_{\bar h}\II=\frac{1}{n}\sum_{i=1}^n\epsilon_i\cdot\II(e_i,e_i).$$ The second fundamental form is obtained by projecting the ambient canonical connection $D \oplus D^*$ from the pull-back bundle of the tangent space of $\R^{n+1}\oplus(\R^{n+1})^*$ onto
$NM=\p(TM)\oplus\R\p\sigma^+$. In other words, we need to compute the following term
\begin{align*}
\II(e_i,e_i)&=-\sum_{k=1}^n \epsilon_k\cdot\hat\g\big((D\oplus D^*)_{e_i}\sigma_*^+e_i, \p(\sigma_*^+e_k)\big) \p\big(\sigma^+_*e_k\big) \\ & \ \ \ \ \ \ -\; \hat\g\big((D\oplus D^*)_{e_i}\sigma_*^+e_i, (\xi,-\nu)\big) (\xi,-\nu).
\end{align*} By making use of the structure equations of the immersion $f$ and its dual, we obtain
\begin{align*}
    (D\oplus D^*)_{e_i}\sigma_*^+e_i&=(D\oplus D^*)_{e_i}(-f_*(Se_i), \nu_*e_i) \\ &=(-D_{e_i}f_*(Se_i), D^*_{e_i}\nu_*e_i) \\ &=\big(-f_*(\nabla_{e_i}(Se_i))-h(e_i,Se_i)\xi, \nu_*(\bar\nabla_{e_i}e_i)-\bar h(e_i,e_i)\nu\big).
\end{align*}The term outside the sum in the expression of $\II(e_i,e_i)$ can be computed directly and is shown to vanish:
\begin{align*}
    \hat\g\big((D\oplus D^*)_{e_i}\sigma_*^+e_i, (\xi,-\nu)\big)&=\frac{1}{2}\Big(\nu\big(f_*(\nabla_{e_i}(Se_i))\big)+h(e_i,Se_i)\cdot\nu(\xi) \\ &  \ \ \ \ \ +\nu_*\big(\bar\nabla_{e_i}e_i\big)(\xi)-h(e_i,Se_i)\cdot\nu(\xi)\Big) \\ &=0,
\end{align*}where we have used point (i) of Proposition \ref{prop:properties_of_conormal_map} and the definition of the dual immersion. It is also important to observe that this computation implies that the second fundamental form of $\sigma^+$, and hence its mean curvature, take values in the part of the normal bundle of $M$ lying in the horizontal distribution $\mathcal H$ induced by the contact structure on $\mathbb S^{n,n+1}$ (see Proposition \ref{prop:contact_pseudo_hyperbolic}). For the remaining terms in the sum, we first observe that
$
\p\big(\sigma_*^+e_k\big)=(-f_*(Se_k),-\nu_*(e_k))
$ and then we compute: \begin{align*}
    \hat\g\big((D\oplus D^*)_{e_i}\sigma_*^+e_i, \p(\sigma_*^+e_k)\big)&=\frac{1}{2}\Big(-\nu_*\big(\bar\nabla_{e_i}e_i\big)\big(f_*(Se_k)\big)+h(e_i,Se_i)\nu\big(f_*(Se_k)\big) \\ & \ \ \ \ \ +\nu_*(e_k)\big(f_*(\nabla_{e_i}(Se_i))\big)+h(e_i,Se_i)\nu_*(e_k)(\xi)\Big) \\ & =\frac{1}{2}\Big(-\nu_*\big(\bar\nabla_{e_i}e_i\big)\big(f_*(Se_k)\big) +\nu_*(e_k)\big(f_*(\nabla_{e_i}(Se_i))\big)\Big) \\ &=\frac{1}{2}\Big(h\big(\bar\nabla_{e_i}e_i,Se_k\big)-h\big(\nabla_{e_i}(Se_i),e_k\big)\Big).
\end{align*} In the first and second equalities, we have used, as above, the properties of the dual immersion and point (i) of Proposition \ref{prop:properties_of_conormal_map}. At this point, we claim that the following equation is satisfied \begin{equation}\label{eq:Pick_tensor_dual}
    h\big(\bar\nabla_{e_i}e_i,Se_k\big)-h\big(\nabla_{e_i}(Se_i),e_k\big)=-\bar C(e_i,e_k,e_i),
\end{equation}which we now use to obtain the desired expression for the mean curvature. In fact, in light of (\ref{eq:Pick_tensor_dual}) we have \begin{align*}
    \mathbf H_{\sigma^+}&=\frac{1}{2n}\sum_{i,k}^n\epsilon_i\cdot\epsilon_k\cdot\bar C(e_k,e_i,e_i)\p(\sigma_*^+e_k) \tag{$\bar C$ is totally-symmetric} \\ &=-\frac{1}{2n}\sum_{k=1}^n\epsilon_k\cdot\trace_{-\bar h}(\bar C)(e_k)\p(\sigma_*^+e_k) \tag{Equation (\ref{eq:trace_Pick_tensor})}.
\end{align*}\footnote{It is important to emphasize that the affine metric induced on the dual immersion is $-\bar h$, hence the trace of $\bar C$ is computed with respect to this metric.} To complete the proof, we now show that formula (\ref{eq:Pick_tensor_dual}) holds. Recall that by definition, $\bar C(e_i,e_k,e_i) = (\bar\nabla_{e_i} \bar h)(e_k,e_i)$, and hence \begin{align*}
    -(\bar\nabla_{e_i} \bar h)(e_k,e_i)&=-e_i\cdot\bar h(e_k,e_i)+\bar h(\bar\nabla_{e_i}e_k,e_i)+\bar h(\bar\nabla_{e_i}e_i,e_k) \tag{Leibniz rule} \\ &=-e_i\cdot h(e_k,Se_i)+h(\bar\nabla_{e_i}e_k,Se_i)+ h(\bar\nabla_{e_i}e_i,Se_k) \tag{$\bar h(\cdot,\cdot)=h(\cdot,S\cdot)$} \\ &=-h\big(e_k,\nabla_{e_i}(Se_i)\big)+ h(\bar\nabla_{e_i}e_i,Se_k). \tag{Proposition \ref{prop:properties_of_conormal_map}}
\end{align*}\end{proof}
\noindent The components of the mean curvature of $\sigma^+$, up to a constant factor, coincide with the functions obtained by evaluating the one form $\operatorname{tr}_{-\bar h}(\bar C)$ on the $g_T=\bar h$-orthonormal basis $\{e_1,\dots,e_n\}$. In particular, $\mathbf H_{\sigma^+}=0$ if and only if each component vanishes, that is, $\operatorname{tr}_{-\bar h}(\bar C)(e_k)=0$ for every $k=1,\dots,n$. This is equivalent to requiring that the one form $\operatorname{tr}_{-\bar h}(\bar C)$ vanishes identically, and therefore, by Proposition \ref{prop:properaffinesphere_trace_less}, that $\nu$ is a proper affine sphere, being centroaffine. %Moreover, since we have taken $\nu$ as the centro-affine normal and $D^*_X\nu=\nu_*(X)$, the affine sphere is hyperbolic (see Definition \ref{def:affine_sphere}).
 We have thus established the following result.
\begin{cor}\label{cor:sigma_maximal_iff_proper_affine_sphere}
The non-degenerate immersion $\sigma^+=(\xi,\nu):M\to\mathbb S^{n,n+1}$, or equivalently $\sigma^-=(-\xi,\nu):M\to\mathbb H^{n+1,n}$, is maximal if and only if the dual immersion $\nu:M\to(\R^{n+1})^*$ is a proper affine sphere.
\end{cor}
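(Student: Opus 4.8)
The plan is to read off the statement almost immediately from the mean–curvature formula of Lemma~\ref{lem:mean_curvature_sigma} combined with the trace characterization of proper affine spheres in Proposition~\ref{prop:properaffinesphere_trace_less}. First I would recall that the standing hypothesis ``$\sigma^+$ non-degenerate'' is, by the discussion following Theorem~\ref{prop:immersion_sigma}, equivalent to $\det(S)\neq 0$; in particular the dual affine metric $-\bar h = -h(S\cdot,\cdot)$ is non-degenerate, so that $\nu$ is a \emph{centroaffine} immersion with non-degenerate affine metric, and Proposition~\ref{prop:properaffinesphere_trace_less} is applicable to it with Pick tensor $\bar C = \bar\nabla\bar h$.

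Next I would fix a $g_T$-orthonormal local frame $\{e_1,\dots,e_n\}$ as in Lemma~\ref{lem:mean_curvature_sigma}, which gives
$$\mathbf H_{\sigma^+}=-\frac{1}{2n}\sum_{k=1}^n \epsilon_k \, \trace_{-\bar h}(\bar C)(e_k)\,\p\big(\sigma^+_*(e_k)\big).$$
The point then is that the $n$ vectors $\p(\sigma^+_*(e_k))$ are linearly independent: $\sigma^+$ is an immersion (Theorem~\ref{prop:immersion_sigma}), so the $\sigma^+_*(e_k)$ span $\sigma^+_*(TM)$; since $\sigma^+$ is horizontal, the restriction of $\phi$ to the horizontal distribution is the para-complex structure $\p$ (Remark~\ref{rem:equivalent_anti_invariant}), which is invertible, and $\p(\sigma^+_*(TM))$ lies inside the normal bundle $NM=\p(TM)\oplus\R\p\sigma^+$. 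Consequently $\mathbf H_{\sigma^+}=0$ if and only if $\trace_{-\bar h}(\bar C)(e_k)=0$ for every $k$, i.e.\ if and only if the $1$-form $\mathrm{Tr}_{-\bar h}(\bar C)$ vanishes identically on $M$.

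Finally I would invoke Proposition~\ref{prop:properaffinesphere_trace_less} for the centroaffine immersion $\nu$: the vanishing of $\mathrm{Tr}_{-\bar h}(\bar C)$ is exactly the condition that $\nu$ be a proper affine sphere. This settles the equivalence for $\sigma^+$; the case $\sigma^-=(-\xi,\nu)$ follows at once, since $\sigma^-$ and $\sigma^+$ are related by the anti-isometry $\mathrm F$ of Remark~\ref{rem:change_signature_induced_metric}, which changes $g_T$ only by sign and hence preserves the vanishing of the mean curvature. The only step requiring a little care is the linear independence of the $\p(\sigma^+_*(e_k))$ — this is what turns ``$\mathbf H_{\sigma^+}=0$'' into the simultaneous vanishing of all the scalar components $\trace_{-\bar h}(\bar C)(e_k)$ — but it is a direct consequence of $\sigma^+$ being a horizontal immersion and $\p$ being a para-complex structure on $\mathcal H$; everything else is a routine substitution into the two cited results.
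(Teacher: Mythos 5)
Your proposal is correct and follows essentially the same route as the paper: read off maximality from the component formula for $\mathbf H_{\sigma^+}$ in Lemma \ref{lem:mean_curvature_sigma} and conclude via Proposition \ref{prop:properaffinesphere_trace_less} applied to the centroaffine immersion $\nu$, with the $\sigma^-$ case handled by the anti-isometry. The only difference is that you spell out the linear independence of the vectors $\p(\sigma^+_*(e_k))$, which the paper uses implicitly when speaking of the ``components'' of the mean curvature; this is a harmless (indeed welcome) extra justification, not a divergence in method.
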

\begin{remark}\label{rem:dual_of_affine_sphere}
In general, taking the dual of $\nu$ and using the canonical identification between $\mathbb{R}^{n+1}$ and its double dual, one obtains a centroaffine immersion $\nu^*$ into $\mathbb{R}^{n+1}$, which is obtained from the initial one $f:M\to\R^{n+1}$ by rescaling its affine data using the shape operator $S$ (see \cite[\S 2.3]{nie2022regular}). In particular, if $f$ is assumed to be centroaffine from the beginning, the double dual $\nu^*$ gives the same immersion back. In the case where $\nu$ is a proper affine sphere, the same its true for its double dual $\nu^*$ (see \cite{gigena1978integral,gigena1981conjecture}), thus yielding a proper affine sphere in $\mathbb{R}^{n+1}$ related to the initial immersion via the shape operator.
\end{remark}
\subsection{Immersions in para-complex hyperbolic space}\label{sec:immersion_para-complex_hyperbolic}
\noindent Here we aim to study the immersions obtained by projecting $\sigma^+$ or $\sigma^-$ into the para-complex hyperbolic space, using the structure of principal $\mathbb R$–bundle described in Section \ref{sec:paracomplex_geometry}.
\\ \\
We briefly recall that the action of $\mathbb R \cong \SO_0(1,1) < \SO_0(n+1,n+1)$ introduced in (\ref{eq:matrix_invertible_elements}) and generated by the matrices $\Lambda_t=\begin{psmallmatrix}
e^t\Id & 0 \\ 0 & e^{-t}\Id
\end{psmallmatrix}$ gives rise to a principal $\mathbb R$–bundle
\(
\pi_{+} : \mathbb S^{n,n+1} \to\mathbb S^{n,n+1}/\mathbb R
\)
whose base is a smooth $2n$–dimensional manifold identified with the para-complex hyperbolic space (see \cite{trettel2019families, rungi2025complex} for an alternative definition). More explicitly, it can be defined as
\[
\mathbb H_{\tau}^{n}
:= 
\big\{(x,y)\in \mathbb R^{n+1,n+1} \ \big|\ b((x,y),(x,y)) = +1 \big\} \big/_{\sim},
\]
where $(x,y)\sim (\tilde x,\tilde y)$ if and only if $\tilde x = e^{t}x$ and $\tilde y = e^{-t}y$. One can show that $\mathbb H_{\tau}^{n}$ carries a para-Kähler structure $(\p, \ome, \g)$ (Appendix \ref{appendix_A}), where $\p$ is a para-complex structure, $\ome$ is a symplectic form, and $\g$ is a pseudo-Riemannian metric of signature $(n,n)$ satisfying $g(\p\cdot,\p\cdot) = -\g$ and $\ome = g(\cdot, \p\cdot)$. Such a para-K\"ahler metric can be induced from the one defined on $\R^{n+1}\oplus(\R^{n+1})^*$ introduced in Section \ref{sec:immersions_pseudo_spheres} and denoted by $(\hat\p,\hat\ome,\hat\g)$. \begin{prop}[{\cite{trettel2019families},\cite{RT_bicomplex}}]\label{prop:diffeo_hyperbolic_incidence}
The space $$\mathbb{P}\mathcal L:=\{([v],[\varphi])\in \R\mathbb P^n\times(\R\mathbb P^n)^{*} \ | \ \varphi(v)\neq 0\},$$ is diffeomorphic to the projective model $\hat\h_\tau^n$ of the para-complex hyperbolic space.
\end{prop}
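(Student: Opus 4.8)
The plan is to construct the diffeomorphism explicitly from the description of $\hat\h_\tau^n$ as a double quotient of a quadric. In the model $V=\R^{n+1}\oplus(\R^{n+1})^*$ equipped with the bilinear form $\hat\g$, the unit quadric is $\mathbb S^{n,n+1}=\{(v,\varphi)\in V\mid \varphi(v)=1\}$, and by definition $\hat\h_\tau^n$ is the quotient of $\mathbb S^{n,n+1}$ first by the $\R$-action $\Psi_t(v,\varphi)=(e^tv,e^{-t}\varphi)$ and then by the antipodal involution $(v,\varphi)\mapsto(-v,-\varphi)$; equivalently, $\hat\h_\tau^n=\mathbb S^{n,n+1}/\R^*$, where $\lambda\in\R^*$ acts by $\lambda\cdot(v,\varphi)=(\lambda v,\lambda^{-1}\varphi)$. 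This action preserves the quadric, and it is free and proper (its identity component is the $\R$-action above, which is free and proper, and the component group $\Z/2$ is finite), so the quotient is a smooth $2n$-manifold and the projection $q\colon\mathbb S^{n,n+1}\to\hat\h_\tau^n$ is a smooth submersion.

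First I would define $\Psi\colon\mathbb S^{n,n+1}\to\R\mathbb P^n\times(\R\mathbb P^n)^*$ by $\Psi(v,\varphi)=([v],[\varphi])$. Since $\varphi(v)=1\neq0$ on the quadric, the image lies in $\mathbb{P}\mathcal L$; moreover $\Psi$ is smooth and $\R^*$-invariant, so it descends to a smooth map $\hat\Psi\colon\hat\h_\tau^n\to\mathbb{P}\mathcal L$.

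Next I would build the inverse. Given $([v],[\varphi])\in\mathbb{P}\mathcal L$, i.e.\ with $\varphi(v)\neq0$, the pair $(v,\varphi/\varphi(v))$ lies on $\mathbb S^{n,n+1}$, and I claim $\Theta([v],[\varphi]):=q\bigl(v,\varphi/\varphi(v)\bigr)$ is well defined: if $v'=\mu v$ and $\varphi'=\mu'\varphi$ are other representatives of the two projective classes, then $\varphi'/\varphi'(v')=\mu^{-1}\bigl(\varphi/\varphi(v)\bigr)$, hence $\bigl(v',\varphi'/\varphi'(v')\bigr)=\mu\cdot\bigl(v,\varphi/\varphi(v)\bigr)$ with $\mu\in\R^*$, which $q$ collapses. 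Smoothness of $\Theta$ follows by choosing smooth local representatives of the tautological line bundles on $\R\mathbb P^n$ and $(\R\mathbb P^n)^*$, normalizing (the factor $\varphi(v)$ being a nowhere-vanishing smooth function there), and postcomposing with $q$. A direct substitution gives $\hat\Psi\circ\Theta=\Id$ and $\Theta\circ\hat\Psi=\Id$, so $\hat\Psi$ is a diffeomorphism.

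The argument is essentially bookkeeping; the point requiring the most care — and the only genuine subtlety — is the matching of the two quotients. The $\R$-action alone produces only rescalings $(v,\varphi)\mapsto(\lambda v,\lambda^{-1}\varphi)$ with $\lambda>0$, so $\mathbb S^{n,n+1}/\R\to\mathbb{P}\mathcal L$ is $2{:}1$; it is precisely the extra antipodal identification defining the \emph{projective} model $\hat\h_\tau^n$ that supplies the sign $\lambda=-1$, making the fibre over each point of $\mathbb{P}\mathcal L$ exactly one $\R^*$-orbit and hence $\hat\Psi$ a bijection. Once this is pinned down there is no further obstacle. One can moreover check, using the anti-isometry between $\mathbb S^{n,n+1}$ and $\mathbb H^{n+1,n}$, that the same construction starting from the $-1$ quadric yields the same identification, consistent with both quotients being $\hat\h_\tau^n$.
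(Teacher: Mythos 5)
Your proof is correct. The paper does not in fact include its own proof of this proposition — it is quoted from the cited references \cite{trettel2019families, RT_bicomplex} — but your explicit construction is exactly the argument implicit in the paper's definition of $\hat\h_\tau^n$ as the quotient of $\hat{\mathbb S}^{n,n+1}$ by the $\SO_0(1,1)$-action: you correctly identify the combined antipodal-plus-$\mathbb R$ quotient with the single $\R^*$-action $(v,\varphi)\mapsto(\lambda v,\lambda^{-1}\varphi)$, and invert via the normalized representative $(v,\varphi/\varphi(v))$. The $2{:}1$ bookkeeping you single out is indeed the only delicate point, and you handle it correctly.
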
\noindent By the projective model $\hat{\mathbb H}_\tau^n$ we simply mean the quotient of the projective model of the pseudosphere $\hat{\mathbb S}^{\,n,n+1}$ by the $\mathbb R \cong \SO_0(1,1)$ action given in (\ref{eq:matrix_invertible_elements}). There is a well-defined notion of para-holomorphic sectional curvature for the metric $\g$, and it can be shown (\cite[Proposition 2.5]{rungi2025complex}) that this curvature is constant and positive. If instead one considers the same space as the base of the principal $\mathbb R$–bundle $\pi_- : \mathbb H^{n+1,n} \to \mathbb H_\tau^n$, then the para-holomorphic sectional curvature is constant and negative. This is due to the fact that the anti-isometry between the pseudosphere and the pseudohyperbolic space (see Remark \ref{rem:changing_signature_p=q}) descends to an involution on $\mathbb H_\tau^n$, reversing the sign of the pseudo-Riemannian metric. \\ \\ The main result of this section is the following theorem and its corollary.
\begin{theorem}\label{prop:immersion_in_H_tau}
The non-degenerate immersion $\sigma^+=(\xi,\nu):M\to\mathbb S^{n,n+1}$ obtained from the equiaffine immersion $f:M\to\R^{n+1}$ and its dual $\nu$ induces a Lagrangian immersion $\bar\sigma:=\pi_+\circ\sigma^+:M\to\h_\tau^n$, namely $(\bar\sigma)^*\ome|_{TM}\equiv 0$, and the induced metric coincides with $\bar h$. Moreover, if $\sigma^+$ is maximal the same holds for $\bar\sigma$.
\end{theorem}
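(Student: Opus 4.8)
The plan is to read off all three assertions from the principal $\mathbb{R}$-bundle picture. Recall that $\pi_+:\mathbb S^{n,n+1}\to\h_\tau^n$ is a pseudo-Riemannian submersion whose horizontal distribution is $\mathcal H=\ker\eta$, with $\mathrm d\pi_+|_{\mathcal H}$ a fibrewise isometry onto $(T\h_\tau^n,\g)$ that intertwines $\phi|_{\mathcal H}$ with the para-complex structure $\p$ and, by the para-Sasaki relations $\mathrm d\eta(X,Y)=g(X,\phi(Y))$ and $\ome=\g(\cdot,\p\cdot)$ of Appendix \ref{appendix_B}, satisfies $\mathrm d\eta|_{\mathcal H}=\pi_+^*\ome$. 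By Theorem \ref{prop:immersion_sigma}, $\sigma^+$ is horizontal, so $\sigma^+_*(TM)\subset\mathcal H$ and, $\mathrm d\pi_+$ being injective on $\mathcal H$, the composite $\bar\sigma=\pi_+\circ\sigma^+$ is an immersion; moreover $\sigma^+$ is $\phi$-anti-invariant with induced metric $g_T=\bar h$, and (once $\det S\neq 0$) its second fundamental form $\II_{\sigma^+}$ is valued in $\p(\sigma^+_*TM)\subset\mathcal H$ by the computation in the proof of Lemma \ref{lem:mean_curvature_sigma}.

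First I would dispatch the induced metric and the Lagrangian property. Since $\pi_+^*\g=g|_{\mathcal H}$ and $\sigma^+$ is horizontal, $\bar\sigma^*\g(X,Y)=g(\sigma^+_*X,\sigma^+_*Y)=g_T(X,Y)=\bar h(X,Y)$ for all $X,Y\in\Gamma(TM)$, which is the metric statement. Likewise $\bar\sigma^*\ome(X,Y)=\mathrm d\eta(\sigma^+_*X,\sigma^+_*Y)$, and this vanishes because $\sigma^+$ is $\phi$-anti-invariant (Remark \ref{rem:equivalent_anti_invariant}); as $\dim M=n=\tfrac12\dim\h_\tau^n$ and $\bar h$ is non-degenerate, $\bar\sigma$ is a non-degenerate Lagrangian immersion, whose normal bundle is then $\p(\bar\sigma_*TM)=\mathrm d\pi_+\big(\p(\sigma^+_*TM)\big)$.

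For the maximality claim I would establish the sharper identity $\II_{\bar\sigma}=\mathrm d\pi_+\circ\II_{\sigma^+}$. The crucial input is that the vertical field $\zeta$ — which along $\sigma^+$ equals the position-type vector $\hat\p(\xi,\nu)$ generating the $\SO_0(1,1)$-action — is a Killing field for $g$, since $\SO_0(n+1,n+1)$ acts on $(\mathbb S^{n,n+1},g)$ by isometries. Combining the Killing equation with $\eta(\sigma^+_*X)=0$, one finds that $B(X,Y):=g(\nabla^g_{\sigma^+_*X}\sigma^+_*Y,\zeta)$ satisfies $B(X,Y)=-B(Y,X)$, while $B(X,Y)-B(Y,X)=g([\sigma^+_*X,\sigma^+_*Y],\zeta)$ is a multiple of $\mathrm d\eta(\sigma^+_*X,\sigma^+_*Y)=0$; hence $B\equiv 0$ and $\nabla^g_{\sigma^+_*X}\sigma^+_*Y$ is horizontal, with tangential/normal splitting along $\sigma^+$ equal to $\sigma^+_*(\nabla^{\bar h}_XY)+\II_{\sigma^+}(X,Y)$. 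Applying $\mathrm d\pi_+$ and invoking the standard submersion fact that the horizontal part of $\nabla^g$ of horizontal fields projects to $\nabla^{\g}$ on $\h_\tau^n$, the left-hand side becomes $\nabla^{\g}_{\bar\sigma_*X}\bar\sigma_*Y=\bar\sigma_*(\nabla^{\bar h}_XY)+\II_{\bar\sigma}(X,Y)$ (Gauss formula for $\bar\sigma$, using $\bar\sigma^*\g=\bar h$). Matching tangential and normal components yields $\II_{\bar\sigma}=\mathrm d\pi_+\circ\II_{\sigma^+}$; taking the trace in an $\bar h$-orthonormal frame gives $\mathbf H_{\bar\sigma}=\mathrm d\pi_+(\mathbf H_{\sigma^+})$, and injectivity of $\mathrm d\pi_+$ on $\mathcal H$ upgrades the stated implication to the equivalence $\mathbf H_{\bar\sigma}\equiv 0\iff\mathbf H_{\sigma^+}\equiv 0$.

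The hard part will be the pseudo-Riemannian/para-Sasaki bookkeeping in the last step: one must show carefully that $\nabla^g$ of the tangent fields of $\sigma^+$ has no $\zeta$-component — this is precisely where $\phi$-anti-invariance enters — and that the push-forward by $\mathrm d\pi_+$ produces exactly $\II_{\bar\sigma}$, the would-be O'Neill correction term being vertical and therefore annihilated by $\mathrm d\pi_+$. The metric and Lagrangian statements, by contrast, are essentially immediate once Theorem \ref{prop:immersion_sigma} and the para-Sasaki relations are in hand.
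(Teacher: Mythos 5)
Your proposal is correct and follows essentially the same route as the paper: horizontality gives the immersion property, $\phi$-anti-invariance together with $\pi_+^*\ome=\mathrm d\eta$ gives the Lagrangian condition, the submersion isometry on $\mathcal H$ gives the induced metric $\bar h$, and the maximality transfer rests on the second fundamental form of $\sigma^+$ having no vertical component. The only divergence is in how that last fact is justified: the paper simply invokes the explicit computation in the proof of Lemma \ref{lem:mean_curvature_sigma} (where $\hat\g\big((D\oplus D^*)_{e_i}\sigma^+_*e_i,(\xi,-\nu)\big)=0$ was shown from the structure equations), whereas you re-derive it structurally from the Killing property of $\zeta$ combined with $\mathrm d\eta$ vanishing on the image, and then spell out the O'Neill-type identity $\II_{\bar\sigma}=\mathrm d\pi_+\circ\II_{\sigma^+}$ that the paper leaves implicit; your version has the small added benefit of making the equivalence $\mathbf H_{\bar\sigma}\equiv 0\iff\mathbf H_{\sigma^+}\equiv 0$ explicit via injectivity of $\mathrm d\pi_+$ on $\mathcal H$.
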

\begin{proof}
The para-Sasaki metric $(\eta, \zeta, \phi, g)$ on $\mathbb{S}^{n,n+1}$ has been obtained from the para-Kähler metric $(\g, \p, \ome)$ on $\h_\tau^n$ using the structure of the principal $\mathbb{R}$-bundle $\pi_+: \mathbb{S}^{n,n+1} \to \h_\tau^n$, where the fiber $\mathbb{R}$ acts exactly via the isometric action described previously. From the fact that $\sigma^+$ is horizontal, it follows that the composition $\pi_+ \circ \sigma^+$ is an immersion; moreover, from the fact that $\sigma^+$ is $\phi$-anti-invariant and from Remark \ref{rem:equivalent_anti_invariant} it follows that the composition $\pi_+ \circ \sigma^+$ is Lagrangian with respect to $\ome$, since $\pi^*_+\ome = \mathrm{d}\eta$ (see Proposition \ref{prop:para_Sasaki_principal_bundle}). The metric induced on $\pi_+\big(\sigma^+(M)\big) \subset \h_\tau^n$ coincides exactly with the one induced within $\mathbb{S}^{n,n+1}$ as just explained, and hence it is equal to $\bar{h}$. Finally, during the proof of Lemma \ref{lem:mean_curvature_sigma} we observed that the second fundamental form of $\sigma^+$ in $\mathbb{S}^{n,n+1}$ has no component along the one-dimensional vertical part, which coincides exactly with $\ker(\mathrm d\pi_+)$. This allows us to conclude that the calculation of the mean curvature $\mathbf{H}_{\sigma^+}$ coincides with that of $\mathbf{H}_{\bar\sigma}$, and thus the claim holds.
\end{proof}
\noindent Repeating the same calculations, it is shown that $\varsigma^+(p,t) :=\Lambda_t\cdot\sigma^+(p)= (e^t \xi_p, e^{-t} \nu_p)$ satisfies all the properties of Theorem \ref{prop:immersion_sigma}. Moreover, the induced affine metric $\bar{h}_t$ on the dual immersion $\nu_t := e^{-t} \nu$ is given by $\bar{h}_t = e^t \bar{h}$ (\cite[Proposition 2.5]{nomizu1994affine}), hence $\bar{C}_t = \bar{\nabla} \bar{h}_t = e^t \bar{C}$. This allows us to observe, from Lemma \ref{lem:mean_curvature_sigma}, that the mean curvature $\mathbf{H}_{\varsigma^+}$ of $\varsigma^+$ remains invariant with respect to the variation of $t \in \mathbb{R}$ and coincides exactly with that of $\varsigma^+(\cdot,0) \equiv \sigma^+(\cdot)$.
\noindent In fact what is proven is that, for any fixed $t\in\R$, the immersion $\varsigma^+(\cdot,t): M \to \mathbb{S}^{n,n+1}$ can be projected onto $\h_\tau^n$, giving rise to the same Lagrangian immersion and having the same mean curvature for every $t \in \mathbb{R}$. At the level of the affine immersion, this corresponds to acting by a dilation factor on the transversal vector field $\xi$ and by its inverse on the dual immersion $\nu$. 
\begin{cor}\label{cor:immersion_paracomplex_fixed_t}
For any fixed $t\in\R$, the map $\bar\sigma_t:=\pi_+\circ\varsigma^+(\cdot,t):M\to\h_\tau^n$ satisfies the following properties: \begin{itemize}
    \item[(i)] $\bar\sigma_t$ is Lagrangian;
    \item[(ii)] the induced metric coincides with the affine metric $e^t\bar h$.\end{itemize}
   Moreover, its mean curvature $\mathbf H_{\bar\sigma^t}$ is equal to $\mathbf{H}_{\sigma^+}$, hence $\sigma^+$ is maximal if and only if $\bar\sigma_t$ is maximal, for every fixed $t\in\R$.
\end{cor}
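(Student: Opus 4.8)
The plan is to obtain the statement as a direct transfer of Theorems~\ref{prop:immersion_sigma} and \ref{prop:immersion_in_H_tau}, Corollary~\ref{cor:sigma_maximal_iff_proper_affine_sphere}, and Lemma~\ref{lem:mean_curvature_sigma} from $\sigma^+$ to the family $\varsigma^+(\cdot,t)=\Lambda_t\cdot\sigma^+=(e^t\xi,e^{-t}\nu)$. The key point is that $\Lambda_t=\diag(e^t\Id,e^{-t}\Id)$ lies in $\SO(n+1,n+1)$ and generates exactly the structure group $\mathbb R$ of the principal bundle $\pi_+\colon\mathbb S^{n,n+1}\to\h_\tau^n$; being the fibrewise action, it preserves the connection form $\eta$, hence the whole para-Sasaki tuple $(\eta,\zeta,\phi,g)$. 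Consequently $\varsigma^+(\cdot,t)$ is again a non-degenerate, horizontal, $\phi$-anti-invariant immersion into $\mathbb S^{n,n+1}$ — this is the remark preceding the statement — and it is the immersion associated by \eqref{eq:immersion_sigma} to the equiaffine pair $(f,e^t\xi)$, whose conormal map is exactly $\nu_t:=e^{-t}\nu$.

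First I would apply Theorem~\ref{prop:immersion_in_H_tau} verbatim with $\varsigma^+(\cdot,t)$ in place of $\sigma^+$. Since $\varsigma^+(\cdot,t)$ is horizontal and $\phi$-anti-invariant, this yields at once that $\bar\sigma_t=\pi_+\circ\varsigma^+(\cdot,t)$ is an immersion which is Lagrangian for $\ome$, because $\pi_+^*\ome=\mathrm d\eta$ and $(\varsigma^+(\cdot,t))^*\mathrm d\eta=0$ by $\phi$-anti-invariance (Remark~\ref{rem:equivalent_anti_invariant}); this is point (i). The same theorem identifies the metric induced by $\bar\sigma_t$ with the affine metric $\bar h_t$ of the dual immersion $\nu_t$, which by \cite[Proposition~2.5]{nomizu1994affine} equals $e^t\bar h$; this is point (ii). The maximality equivalence will then follow from Corollary~\ref{cor:sigma_maximal_iff_proper_affine_sphere} applied to $\varsigma^+(\cdot,t)$ once the mean-curvature identity is established.

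For the mean curvature I would run Lemma~\ref{lem:mean_curvature_sigma} for $\varsigma^+(\cdot,t)$: with $\bar C_t=\bar\nabla\bar h_t=e^t\bar C$ and a $\bar h_t$-orthonormal frame $\{e_1^t,\dots,e_n^t\}$, obtained from a $\bar h$-orthonormal frame $\{e_k\}$ by $e_k^t=e^{-t/2}e_k$ with the same signs $\epsilon_k$, the formula reads
\[
\mathbf H_{\varsigma^+(\cdot,t)}=-\frac{1}{2n}\sum_{k=1}^n\epsilon_k\cdot\trace_{-\bar h_t}(\bar C_t)(e_k^t)\,\p\big(\varsigma^+_*(e_k^t)\big).
\]
The substance of the proof is that the one-form $\trace_{-\bar h_t}(\bar C_t)$ is independent of $t$: the factor $e^t$ in $\bar C_t$ is cancelled by the two factors $e^{-t/2}$ coming from the slots contracted with $\bar h_t$, so $\trace_{-\bar h_t}(\bar C_t)=\trace_{-\bar h}(\bar C)$, while the residual scaling of the frame is absorbed by $\p(\varsigma^+_*(e_k^t))$ because $\p$ commutes with the diagonal matrix $\Lambda_t$. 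Hence $\mathbf H_{\varsigma^+(\cdot,t)}$ coincides with $\mathbf H_{\sigma^+}$ under the identification of normal bundles induced by $\Lambda_t$; and since — as already observed inside the proofs of Lemma~\ref{lem:mean_curvature_sigma} and Theorem~\ref{prop:immersion_in_H_tau} — the second fundamental form of $\varsigma^+(\cdot,t)$ in $\mathbb S^{n,n+1}$ has no component along $\mathbb R\,\p\,\varsigma^+=\ker(\mathrm d\pi_+)$, this mean curvature descends unchanged to $\bar\sigma_t$. Thus $\mathbf H_{\bar\sigma_t}=\mathbf H_{\sigma^+}$, and in particular $\sigma^+$ is maximal if and only if $\bar\sigma_t$ is.

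The only delicate point, mild since this is a corollary, is the bookkeeping of how the affine invariants $\bar h$ and $\bar C$ of the dual immersion rescale under $\xi\mapsto e^t\xi$ and the verification that these scalings cancel in the formula of Lemma~\ref{lem:mean_curvature_sigma}, so that the mean curvature is genuinely $t$-invariant rather than merely conformally rescaled; everything else is a direct reuse of the results already established for $\sigma^+$.
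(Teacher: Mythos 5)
Your proposal is correct and follows essentially the same route as the paper: establish that $\varsigma^+(\cdot,t)$ retains all the properties of Theorem \ref{prop:immersion_sigma}, track how $\bar h$ and $\bar C$ rescale for the pair $(f,e^t\xi)$ with conormal $e^{-t}\nu$, feed this into Lemma \ref{lem:mean_curvature_sigma} to see that the mean curvature is $t$-independent, and then descend through $\pi_+$ as in Theorem \ref{prop:immersion_in_H_tau} using that the second fundamental form has no component along $\ker(\mathrm d\pi_{+})$. The only (cosmetic) difference is that you justify the persistence of horizontality and $\phi$-anti-invariance by observing that $\Lambda_t$ is the principal $\mathbb R$-action and an isometry of the para-Sasaki structure, where the paper simply says the computations of Theorem \ref{prop:immersion_sigma} can be repeated verbatim.
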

\noindent It will be shown in the next section that the converse also holds, namely that the property of being Lagrangian is the only local obstruction to finding a horizontal lift from $\h_\tau^n$ to $\mathbb{S}^{n,n+1}$, which will give rise to a one-parameter family of affine immersions $f_t$ with their duals $\nu_t$.

\subsection{The inverse problem}\label{sec:inverse_problem}
The goal of this section is to describe the inverse construction with respect to the one developed in Section \ref{sec:immersions_pseudo_spheres}. More precisely, given a non-degenerate Lagrangian immersion $\bar\sigma : M \to \h_\tau^n$, where $M$ is a smooth, connected, and simply connected $n$-manifold, one can prove the existence of a unique horizontal lift $\sigma^+ : M \to \mathbb{S}^{n,n+1}$ which is also $\phi$–anti-invariant, and whose induced metric coincides with that induced by $\bar\sigma$. The latter gives rise to two centroaffine immersions into $\mathbb{R}^{n+1}$ and $(\mathbb{R}^{n+1})^*$, respectively, satisfying the duality property of Definition \ref{def:conormal_map}.
\\ \\ 
Let $M$ be a smooth orientable and simply-connected $n$-manifold, then given a non-degenerate Lagrangian immersion $\bar{\sigma} : M \to \h_\tau^n$, we can always find a lift $\sigma^+=(\xi,\nu) : M \to \mathbb{S}^{n,n+1}$. This is standard and a consequence of the principal $\R$-bundle structure $\pi_+:\mathbb S^{n,n+1}\to\h_\tau^n$ and the fact that $M$ is simply-connected. Any other lift is obtained as $
\sigma_\mu^+ = (e^\mu \xi, e^{-\mu} \nu),
$
where $\mu$ is a smooth real-valued function on $M$, $\xi$ is a smooth vector field on $\mathbb{R}^{n+1}$, and $\big\{\nu(p)\big\}_{p \in M}$ is a family of linear functionals on $\mathbb{R}^{n+1}$. Moreover, by the results presented in \cite{dillen1990conjugate} and \cite{hildebrand2011cross, hildebrand2011half}, we can find a family of centroaffine immersions $f^\mu:M\to\R^{n+1}$ with affine normals $f^\mu=e^{\mu}\xi$, and the same for $\nu^\mu=e^{-\mu}\nu:M\to(\R^{n+1})^*$. It is important to emphasize that the immersion into the dual space is not, a priori, the dual of $f^\mu$ in the sense of Definition \ref{def:conormal_map}. As we shall show, there exists a special choice of $\mu$ for which this is indeed the case. \begin{theorem}\label{thm:inverse_problem_lift}In the framework above, the following statements hold:
 \begin{enumerate}
     \item[(i)] each lift $\sigma_\mu^+ : M \to \mathbb{S}^{n,n+1}$ is $\phi$–anti-invariant, and the induced metric coincides with that induced by $\bar{\sigma}$;
\item[(ii)] there exists a unique function $\hat{\mu}$, up to an additive constant, such that the lift $\sigma_{\hat{\mu}}^+$ is horizontal;
\item[(iii)] the centroaffine immersions $f^{\hat{\mu}}$ and $\nu^{\hat{\mu}}$ induced by $\sigma_{\hat{\mu}}^+$ are dual to each other and unique up to homothety.
 \end{enumerate}
\end{theorem}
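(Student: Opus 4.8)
The plan is to deduce the three points from the geometry of the principal $\mathbb R$-bundle $\pi_+:\mathbb S^{n,n+1}\to\h_\tau^n$, relying on two facts already available: $\mathrm{d}\eta=\pi_+^*\ome$ (Proposition \ref{prop:para_Sasaki_principal_bundle}), and the transformation law of the connection form $\eta$ under the fibre action. Writing each lift as $\sigma_\mu^+=\Psi_\mu\circ\sigma^+=(e^\mu\xi,e^{-\mu}\nu)$ for $\mu\in C^\infty(M)$, I would first establish
\[
(\sigma_\mu^+)^*\eta=(\sigma^+)^*\eta+\mathrm{d}\mu,
\]
which is just the statement that $\eta$ restricts to a principal connection form on the $\mathbb R$-bundle; it follows by a short computation in the model $V=\mathbb R^{n+1}\oplus(\mathbb R^{n+1})^*$, using that $\eta$ at a point $(v,\varphi)\in\mathbb S^{n,n+1}$ equals $\hat\ome\big((v,\varphi),\,\cdot\,\big)$ (as in the proof of Theorem \ref{prop:immersion_sigma}) and differentiating the scalars $e^{\pm\mu}$. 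I then set $\beta:=(\sigma^+)^*\eta\in\Omega^1(M)$.

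Part (i) and the existence half of (ii) follow at once. By Remark \ref{rem:equivalent_anti_invariant}, $\phi$-anti-invariance of $\sigma_\mu^+$ means $(\sigma_\mu^+)^*\mathrm{d}\eta\equiv0$; since every lift satisfies $\pi_+\circ\sigma_\mu^+=\bar\sigma$, one gets $(\sigma_\mu^+)^*\mathrm{d}\eta=(\sigma_\mu^+)^*\pi_+^*\ome=\bar\sigma^*\ome=0$ because $\bar\sigma$ is Lagrangian. For the induced metric I would compute $(\sigma_\mu^+)^*g$ directly in the model $V$, as in the proof of Theorem \ref{prop:immersion_sigma}: the horizontal part of $(\sigma_\mu^+)_*(X)$ reproduces $\bar\sigma^*\g=\bar h$ (pseudo-Riemannian submersion property of $\pi_+$), the vertical contribution being governed by $(\sigma_\mu^+)^*\eta=\beta+\mathrm{d}\mu$, so in particular the horizontal lift produced below has induced metric exactly $\bar h$. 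For the existence in (ii), the displayed identity shows that $\sigma_\mu^+$ is horizontal precisely when $\mathrm{d}\mu=-\beta$; and $\beta$ is closed, since $\mathrm{d}\beta=(\sigma^+)^*\mathrm{d}\eta=(\sigma^+)^*\pi_+^*\ome=\bar\sigma^*\ome=0$, so by simple connectedness of $M$ it is exact, $\beta=-\mathrm{d}\hat\mu$, with $\hat\mu$ unique up to an additive constant since $M$ is connected.

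For (iii) I would work with the horizontal lift $\sigma_{\hat\mu}^+=(f^{\hat\mu},\nu^{\hat\mu})$, where $f^{\hat\mu}=e^{\hat\mu}\xi$ and $\nu^{\hat\mu}=e^{-\hat\mu}\nu$ are the centroaffine immersions supplied by Dillen--Vrancken and Hildebrand (\cite{dillen1990conjugate,hildebrand2011cross,hildebrand2011half}), their transverse field being the respective position vector. Since $\sigma_{\hat\mu}^+(M)\subset\mathbb S^{n,n+1}$ we have $\nu^{\hat\mu}(f^{\hat\mu})=1$; differentiating along $X\in\Gamma(TM)$ gives $\nu^{\hat\mu}\big(f^{\hat\mu}_*(X)\big)+\nu^{\hat\mu}_*(X)(f^{\hat\mu})=0$, whereas horizontality $(\sigma_{\hat\mu}^+)^*\eta=0$, spelled out through $\eta_{(v,\varphi)}=\hat\ome\big((v,\varphi),\,\cdot\,\big)$, reads $\nu^{\hat\mu}\big(f^{\hat\mu}_*(X)\big)-\nu^{\hat\mu}_*(X)(f^{\hat\mu})=0$. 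Together these force $\nu^{\hat\mu}\big(f^{\hat\mu}_*(X)\big)=0=\nu^{\hat\mu}_*(X)(f^{\hat\mu})$ for every $X$, which is exactly the condition in Definition \ref{def:conormal_map} that $\nu^{\hat\mu}$ be the conormal of $f^{\hat\mu}$; by Remark \ref{rem:dual_of_affine_sphere} the double dual of a centroaffine immersion is that immersion, so $f^{\hat\mu}$ is in turn the conormal of $\nu^{\hat\mu}$, and the two are dual in the required sense. Finally the only freedom left is $\hat\mu\mapsto\hat\mu+c$, which sends $(f^{\hat\mu},\nu^{\hat\mu})\mapsto(e^cf^{\hat\mu},e^{-c}\nu^{\hat\mu})$ --- equivalently, the residual action of $\Lambda_c\in\SO_0(1,1)$ on $\mathbb S^{n,n+1}$ --- so the dual pair is unique up to homothety.

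The main obstacle is of two kinds. One is careful bookkeeping: pinning down the explicit form of $\eta$ on $\mathbb S^{n,n+1}$, the gauge-transformation identity, and from it the clean equivalence ``$\sigma_\mu^+$ horizontal $\iff\mathrm{d}\mu=-\beta$'', where signs and the choice of the position and Reeb vectors must be tracked correctly. The other, which is the conceptual heart of (iii), is recognizing that horizontality of the lift is \emph{exactly} the conormality relation $\nu^{\hat\mu}\big(f^{\hat\mu}_*(TM)\big)=0$; once this is seen, duality and uniqueness up to homothety are essentially formal, and the external results of Dillen--Vrancken and Hildebrand are needed only to ensure that the $f^\mu$ and $\nu^\mu$ are genuine centroaffine immersions to begin with.
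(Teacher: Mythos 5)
Your proposal is correct and reaches the paper's conclusions by the same overall strategy, but the decisive step is organized differently. The paper computes the differential of $\sigma_\mu^+$ explicitly, reads off horizontality as $\mathrm d\hat\mu(X)=\nu_*(X)(f)=:\alpha(X)$, and proves closedness of $\alpha$ by a direct computation with the centroaffine structure equations (\ref{eq:structure_equations_f}), (\ref{eq:structure_equations_nu}), feeding in the Lagrangian hypothesis through the symmetry $\nu_*(X)\big(f_*(Y)\big)=\nu_*(Y)\big(f_*(X)\big)$. You instead encode everything in the connection form: the gauge law $(\sigma_\mu^+)^*\eta=(\sigma^+)^*\eta+\mathrm d\mu$ gives the same horizontality equation (your $\beta$ is $-\alpha$ up to the normalization of $\eta$), and closedness of $\beta$ follows in one line from $\mathrm d\eta=\pi_+^*\ome$ and $\bar\sigma^*\ome=0$, with no structure equations needed; simple connectedness then produces $\hat\mu$, unique up to a constant, exactly as in the paper. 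The $\phi$-anti-invariance is argued identically (pull back $\mathrm d\eta$ through the Lagrangian condition), and your part (iii) — combining the derivative of $\nu^{\hat\mu}(f^{\hat\mu})=1$ with the horizontality identity to force $\nu^{\hat\mu}\big(f^{\hat\mu}_*(X)\big)=\nu^{\hat\mu}_*(X)(f^{\hat\mu})=0$ — is the same computation as the paper's final display, with uniqueness up to homothety coming from the additive constant in $\hat\mu$ in both arguments. Your route buys a cleaner, purely bundle-theoretic proof of the existence of the horizontal lift; the paper's buys an explicit formula for $\alpha$ in affine terms.

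One caveat: you only establish the metric claim of (i) for the horizontal lift, while the statement asserts it for every lift $\sigma_\mu^+$. Note, however, that the paper's own proof does not address the metric claim at all, and your gauge identity in fact quantifies the discrepancy: since the Reeb direction satisfies $\hat\g(\zeta,\zeta)=-1$ and is $\hat\g$-orthogonal to $\ker\eta$, the pullback of the ambient metric is $\bar\sigma^*\g-\big((\sigma_\mu^+)^*\eta\big)\otimes\big((\sigma_\mu^+)^*\eta\big)$, so the induced metric of a lift agrees with that of $\bar\sigma$ exactly when the lift is horizontal. Your restricted formulation is therefore the accurate one, and it is all that is used downstream (e.g.\ in Corollary \ref{cor:inverse_sigma_maximal_iff_proper_affine_sphere}); if you want to match the letter of the statement, it would be worth recording the displayed correction term explicitly.
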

\begin{proof}
Fixing a smooth function $\mu : M \to \mathbb{R}$, the differential of the lift $\sigma^\mu$ is 
$$(\sigma_\mu^+)_*(X) = \big( \mathrm{d}\mu(X)e^{\mu}f - e^{\mu}f_*(X), -\mathrm{d}\mu(X)e^{-\mu}\nu + e^{-\mu}\nu_*(X) \big),
$$
where $X \in \Gamma(TM)$. Using Remark \ref{rem:equivalent_anti_invariant}, and the fact that the projection $\bar{\sigma}$ with values in $\h_\tau^n$ is Lagrangian, one obtains the $\phi$–anti-invariance. In fact, \begin{align*}
\mathrm d\eta\big((\sigma_\mu^+)_*(X),(\sigma_\mu^+)_*(Y)\big)&=\big((\pi_+)^*\ome\big)((\sigma_\mu^+)_*(X),(\sigma_\mu^+)_*(Y)\big) \\ &=\ome\big(\bar\sigma_*(X),\bar\sigma_*(Y)\big) \\ &=0.
\end{align*}Moreover, the same applies to the symplectic form $\hat\ome$ defined on $\R^{n+1,n+1}$, which gives in the case $\mu\equiv 0$: $$\nu_*(X)\big(f_*(Y)\big)=\nu_*(Y)\big(f_*(X)\big).$$ We now aim to determine a smooth function $\hat\mu : M \to \mathbb{R}$ satisfying the conditions
$$
\hat{\g}\big((\sigma_{\hat\mu}^+)_*(X), \sigma_{\hat\mu}^+\big) = \hat{\g}\big((\sigma_{\hat\mu}^+)_*(X), \hat\p(\sigma_{\hat\mu}^+)\big) = 0,
$$
for all $X \in \Gamma(TM)$. Recall that $\hat{\g}$ and $\hat{\p}$ denote, respectively, the pseudo-Riemannian bilineare form of signature $(n+1,n+1)$ and the para-complex structure on $\mathbb{R}^{n+1,n+1}$ (see the discussion after Remark \ref{rem:equivalent_anti_invariant} for further details). In other words, we seek for a function $\hat\mu$ so that the lift $\sigma_{\hat\mu}^+$ is horizontal with respect to the distribution $\mathcal H\subset T\mathbb S^{n,n+1}$ induced by the contact $1$-form $\eta$. From the expression of the differential $(\sigma_{\hat\mu}^+)_*(X)$, one obtains
$$
\hat{\g}\big((\sigma_{\hat\mu}^+)_*(X), \sigma_{\hat\mu}^+\big) = \frac{1}{2}\big(\nu_*(X)(f) + \nu(f_*(X))\big),
$$
which vanishes identically, since it corresponds to the derivative along the vector field $X$ of the relation $\nu(\xi) = 1$. Regarding the second term, with a similar computation, we get $$\hat{\g}\big((\sigma_{\hat\mu}^+)_*(X), \hat\p(\sigma_{\hat\mu}^+)\big) = 0 \iff -\mathrm d\hat\mu(X)+\nu_*(X)(f)=0, $$ for any $X\in\Gamma(TM)$. Let us define the $1$-form $\alpha(X):=\nu_*(X)(f)$ on $M$, then the lift $\sigma_{\hat\mu}^+$ is horizontal if and only if $\alpha=\mathrm d\hat\mu$. Let us first compute the differential of $\alpha$ by using the standard formula $\mathrm d\alpha(X,Y)=X\cdot\big(\alpha(Y)\big)-Y\cdot\big(\alpha(X)\big)-\alpha\big([X,Y]\big)$. According to its definition we have \begin{align*}
   &X\cdot\big(\alpha(Y)\big)=D_X^*\big(\nu_*(X)\big)(f)+\nu_*(Y)\big(D_Xf\big) \\ &\qquad\qquad\quad=\nu_*\big(\bar{\nabla}_XY\big)(f)-\bar h(X,Y)\nu(f)-\nu_*(Y)\big(f_*(X)\big) \ ; \\ &Y\cdot\big(\alpha(X)\big)=\nu_*\big(\bar{\nabla}_YX\big)(f)-\bar h(Y,X)\nu(f)-\nu_*(X)\big(f_*(Y)\big) \ ; \\ &\alpha\big([X,Y]\big)=\nu_*\big([X,Y]\big)(f) \ ;
\end{align*}where we used the structure equations (\ref{eq:structure_equations_f}),(\ref{eq:structure_equations_nu}) defining $f$ and $\nu$ in the case $f$ is centroaffine. Since $\bar{\nabla}$ is torsion-free, by summing all the previously obtained terms and deleting the equal ones, we arrive at the following expression:
$$
\mathrm{d}\alpha(X,Y)=-\nu_*(Y)\big(f_*(X)\big)+\nu_*(X)\big(f_*(Y)\big).
$$ The last term is zero for any $X,Y\in\Gamma(TM)$ as a consequence of $\bar\sigma$ being Lagrangian in $\h_\tau^n$ and the discussion at the beginning of the proof. Therefore, $\alpha$ is a closed $1$-form on the simply-connected manifold $M$, hence there exists a smooth function $\lambda:M\to\R$ such that $\alpha=\mathrm d\lambda$. To conclude the proof of point $(ii)$, we observe that by choosing $\hat{\mu} = \lambda$, the lift $\sigma_{\hat{\mu}}^+$ is horizontal, and such a function is unique up to an additive constant. Point $(iii)$ follows directly from the construction already described in Section \ref{sec:immersions_pseudo_spheres} and from the fact that $\nu^{\hat{\mu}}$ is precisely the dual immersion of $f^{\hat{\mu}}$, as the following computation shows:
\begin{align*}\nu^{\hat\mu}\big(f_*^{\hat\mu}(X)\big)=e^{-\hat\mu}\nu\big(\mathrm d\hat\mu(X)e^{\hat\mu}f+e^{\hat\mu}f_*(X)\big)=\mathrm d\hat\mu(X)+\nu\big(f_*(X)\big)=0.
\end{align*}
\end{proof}

\begin{cor}\label{cor:inverse_sigma_maximal_iff_proper_affine_sphere}
Given a non-degenerate Lagrangian immersion $\bar\sigma:M\to\h_\tau^n$, the following are equivalent: \begin{itemize}
    \item[(i)] $\bar\sigma$ is maximal;
    \item[(ii)] The horizontal lift $\sigma^+_{\hat\mu}:M\to\mathbb S^{n,n+1}$, or equivalently $\sigma^-_{\hat\mu}:M\to\mathbb H^{n+1,n}$, is maximal;
    \item[(iii)] the affine immersion $f^{\hat\mu}$ and its dual $\nu^{\hat\mu}$ are proper affine spheres.
\end{itemize}
\end{cor}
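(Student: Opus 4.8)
The plan is to obtain the corollary by assembling the inverse construction of Theorem \ref{thm:inverse_problem_lift} with the mean--curvature formula of Lemma \ref{lem:mean_curvature_sigma} and its consequences, so that the proof reduces to a bookkeeping argument.

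First I would prove (i)$\,\Leftrightarrow\,$(ii). By Theorem \ref{thm:inverse_problem_lift} the non-degenerate Lagrangian immersion $\bar\sigma$ admits a horizontal, $\phi$--anti-invariant lift $\sigma^+_{\hat\mu}:M\to\mathbb S^{n,n+1}$ with $\pi_+\circ\sigma^+_{\hat\mu}=\bar\sigma$ and with the same induced metric as $\bar\sigma$; moreover this lift has the form $\sigma^+_{\hat\mu}=(f^{\hat\mu},\nu^{\hat\mu})$, where $f^{\hat\mu}=e^{\hat\mu}\xi$ is a centroaffine immersion and, by part (iii) of that theorem, $\nu^{\hat\mu}=e^{-\hat\mu}\nu$ is precisely its conormal map in the sense of Definition \ref{def:conormal_map}. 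Hence Lemma \ref{lem:mean_curvature_sigma} applies verbatim to $\sigma^+_{\hat\mu}$, and in particular its second fundamental form has no component along the vertical line $\ker(\mathrm d\pi_+)=\R\,\p\sigma^+_{\hat\mu}$. Exactly as in the proof of Theorem \ref{prop:immersion_in_H_tau}, it follows that $\mathrm d\pi_+$ identifies $\mathbf H_{\sigma^+_{\hat\mu}}$ with $\mathbf H_{\bar\sigma}$, so one vanishes if and only if the other does; the statement for $\sigma^-_{\hat\mu}$ follows from the anti-isometry $\mathrm F$ of Remark \ref{rem:change_signature_induced_metric}, which preserves maximality.

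Next I would prove (ii)$\,\Leftrightarrow\,$(iii). Applying Corollary \ref{cor:sigma_maximal_iff_proper_affine_sphere} to the lift $\sigma^+_{\hat\mu}=(f^{\hat\mu},\nu^{\hat\mu})$ gives that $\sigma^+_{\hat\mu}$ is maximal if and only if the dual immersion $\nu^{\hat\mu}$ is a proper affine sphere. It then remains to upgrade this to the statement that both $f^{\hat\mu}$ and $\nu^{\hat\mu}$ are proper affine spheres: since $f^{\hat\mu}$ is centroaffine, its double dual coincides with $f^{\hat\mu}$ itself, i.e. $f^{\hat\mu}=(\nu^{\hat\mu})^*$, and by Remark \ref{rem:dual_of_affine_sphere} (see also \cite{gigena1978integral, gigena1981conjecture}) the dual of a proper affine sphere is again a proper affine sphere. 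Therefore $\sigma^+_{\hat\mu}$ is maximal if and only if $\nu^{\hat\mu}$ is a proper affine sphere, which in turn holds if and only if both $f^{\hat\mu}$ and $\nu^{\hat\mu}$ are, completing the cycle. Alternatively the whole argument can be phrased intrinsically via Proposition \ref{prop:properaffinesphere_trace_less}: all three conditions amount to the vanishing of the one-form $\operatorname{tr}_{-\bar h}(\bar C)$, the trace of the Pick tensor of $\nu^{\hat\mu}$ with respect to its affine metric.

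I do not expect a genuine obstacle here. The only point that needs mild care is the identification $\mathbf H_{\sigma^+_{\hat\mu}}=\mathbf H_{\bar\sigma}$ under $\mathrm d\pi_+$, which rests on the vanishing of the vertical component of the second fundamental form established inside the proof of Lemma \ref{lem:mean_curvature_sigma}. That computation was carried out for immersions built directly from equiaffine data, but it transfers without change to $\sigma^+_{\hat\mu}$ precisely because this lift is again of the form $(f^{\hat\mu},\nu^{\hat\mu})$ with $\nu^{\hat\mu}$ the conormal of a centroaffine immersion $f^{\hat\mu}$, so Lemma \ref{lem:mean_curvature_sigma} is available verbatim.
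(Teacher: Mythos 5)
Your argument is correct and follows essentially the same route as the paper's proof: it combines Theorem \ref{thm:inverse_problem_lift} (identifying the horizontal lift as $(f^{\hat\mu},\nu^{\hat\mu})$ with $\nu^{\hat\mu}$ the conormal of the centroaffine $f^{\hat\mu}$), the vanishing of the vertical component of the second fundamental form from Lemma \ref{lem:mean_curvature_sigma} to identify $\mathbf H_{\sigma^+_{\hat\mu}}$ with $\mathbf H_{\bar\sigma}$, Corollary \ref{cor:sigma_maximal_iff_proper_affine_sphere}, and Remark \ref{rem:dual_of_affine_sphere} for the double dual. The only difference is organizational (two equivalences rather than a cycle of implications), which does not change the substance.
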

\begin{proof}
If we start with a Lagrangian and maximal immersion $\bar\sigma$ in $\mathbb{H}_\tau^n$, then its horizontal lift $\sigma^+_{\hat\mu}$ still has zero mean curvature, since horizontal immersions in $\mathbb{S}^{n,n+1}$ have second fundamental form taking values in $\mathbf{P}(TM)$, and it coincides with that of $\bar\sigma$ (see the proof of Lemma \ref{lem:mean_curvature_sigma}). Conversely, if we only know that $\sigma^+_{\hat\mu}$ is maximal, then Corollary \ref{cor:sigma_maximal_iff_proper_affine_sphere} implies that $\nu^{\hat\mu}$ is a proper affine sphere; its dual, which now coincides with $f^{\hat\mu}$ since it is centroaffine (Remark \ref{rem:dual_of_affine_sphere}), must also be a proper affine sphere. Finally, if both $f^{\hat\mu}$ and $\nu^{\hat\mu}$ are proper affine spheres, then from the computations in Lemma \ref{lem:mean_curvature_sigma} and Corollary \ref{cor:sigma_maximal_iff_proper_affine_sphere} we deduce that $\bar\sigma$ is maximal.
\end{proof}
\subsection{Examples}

\subsubsection{The case $n=1$}\label{sec:examples}\hfill\vspace{0.3em}\\
The first interesting case is $n=1$, namely, on the one hand we have affine curves $f:M\to\mathbb R^2$ and, on the other, immersions $\sigma^+$ and $\sigma^-$ into $\mathbb S^{1,2}$ and $\mathbb H^{2,1}$, respectively. The pseudohyperbolic space $\mathbb H^{2,1}$ is an isometric model of the so-called $3$–dimensional Anti–de Sitter space, denoted $\mathrm{AdS}^3$. It can be introduced in various isometric realizations and has a deep connection with (universal) Teichmüller theory (see \cite{bonsante2020anti} for a survey on the subject). In this very special case, we may assume that hyperbolic or elliptic affine spheres are, up to affine transformations, a hyperbola asymptotic to the bisectors of the plane and an ellipse, respectively. Moreover, if $f$ is an hyperbolic or elliptic affine sphere, the induced map $\sigma^-:M\to \mathrm{AdS}^3$ is respectively spacelike or timelike. Indeed, in this case $\sigma^-=(-f,\nu)$ and $D_X f=-f_*(X)$ when $f$ is an elliptic affine sphere; therefore, by retracing the computations of Theorem \ref{prop:immersion_sigma}, we obtain that the induced metric is negative definite. Furthermore, $\sigma^-$ is maximal, and in dimension $1$ this is equivalent to saying that $\sigma^-$ is a spacelike or timelike geodesic, depending on the chosen proper affine sphere. Notice that timelike geodesics in \(\mathrm{AdS}^3\) are closed and never reach the boundary at infinity \(\partial_\infty \mathrm{AdS}^3\), whereas spacelike ones intersect the boundary in two distinct points which, in light of Lemma \ref{lem:buondary_pseudosphere}, proven in the next section, correspond exactly to the two endpoints of the convex segment \(\Omega\) on which the hyperbolic affine sphere is asymptotic.
\\ \\ Another interesting phenomenon in this case is that, after applying the isometric action of \(\mathbb{R} \cong \mathrm{SO}_0(1,1) < \mathrm{SO}_0(n+1,n+1)\) introduced in (\ref{eq:matrix_invertible_elements}) to the immersion 
\(\sigma^-\), we obtain an immersion \(\varsigma^- : \mathbb{R}^2 \to \mathrm{AdS}^3\) which is spacelike, complete, 
and maximal (see Section \ref{sec:immersions_pseudo_spheres}). Moreover, by construction, its Gauss curvature is identically zero at every point of the 
surface. In particular, we recover the unique flat, spacelike, maximal surfaces in 
\(\mathrm{AdS}^3\), studied by Barbot (\cite{barbot2015deformations}), whose boundary at infinity is a polygon with four vertices explicitly 
described in terms of the boundary of the convex domain \(\Omega\), as explained in Proposition \ref{prop:boundary_immersion_into_paracomplex_hyperbolic}.

\subsubsection{The case $n=2$}\hfill\vspace{0.3em}\\
Given a hyperbolic affine sphere $f: M \to \mathbb{R}^3$ with positive definite Blaschke metric $h$, it has been previously shown (see \cite{hildebrand2011cross} and \cite{RT_bicomplex}) that this corresponds to a spacelike\footnote{In our case $\bar\sigma$ is timelike since we are considering the anti-isometric model of $\h_\tau^n$} maximal Lagrangian surface $\bar{\sigma}: M \to \mathbb{H}_\tau^2$, whose induced metric coincides exactly with the Blaschke metric (see also Theorem \ref{prop:immersion_in_H_tau}). In terms of the unique horizontal timelike and $\phi$-anti-invariant immersion $\sigma^+: M \to \mathbb{S}^{2,3}$, we know that it is anti-isometric to a spacelike surface $\sigma^-: M \to \mathbb{H}^{3,2}$ with the same properties (see Remark \ref{rem:equivalent_anti_invariant}). By looking at the totally geodesic copy of $\mathbb{H}^{3,2}$ into $\mathbb{H}^{4,2}$ and thinking of the previously described surface $\sigma^-$ as taking values in $\mathbb{H}^{4,2}$, we recover a particular case of the alternating holomorphic curves studied by Collier–Toulisse (\cite[\S 5.5]{collier2023holomorphic}) whenever the Pick tensor $C$ (see (\ref{eq:tensoreC})) of the hyperbolic affine sphere $f$ does not vanish. In their setting, $\mathbb{H}^{4,2}$ is endowed with the natural almost complex structure induced by the algebra of split octonions, and is therefore viewed as a homogeneous space for the split real form $\mathrm G_2'<\SO_0(4,3)$ of the complex simple group $\mathrm G_2^{\mathbb{C}}$. In our construction, the copies of $\mathbb{R}^3$ and $(\mathbb{R}^3)^*$ in which the hyperbolic affine sphere and its dual respectively lie correspond to two transverse isotropic $3$-planes in $\mathbb{R}^{3,3}$. Using the construction with split octonions and the two totally geodesic inclusions $\mathbb{R}^{3,3} \hookrightarrow \mathbb{R}^{4,3}$ and $\mathbb{H}^{3,2} \hookrightarrow \mathbb{H}^{4,2}$, the three-dimensional distributions $\mathcal{D}^+$ and $\mathcal{D}^-$ introduced in \cite[\S 3.3]{collier2023holomorphic} correspond precisely to the isotropic $3$-planes just described. In particular, the construction of Collier–Toulisse allows one to explicitly describe a hyperbolic affine sphere in $\mathcal{D}^+$ and its dual in $\mathcal{D}^-$ starting from a non–totally geodesic alternating holomorphic curve in $\mathbb{H}^{3,2} \subset \mathbb{H}^{4,2}$, which coincides precisely with our spacelike, maximal, horizontal and $\phi$-anti-invariant immersion.

\subsubsection{Pseudo-flats}\hfill\vspace{0.3em}\\
An interesting example of hyperbolic affine spheres in every dimension is given by the hypersurface in \(\mathbb{R}^{n+1}\) parametrized as
\[
\mathcal{F} := \big\{(x_1,\dots,x_{n+1}) \in \mathbb{R}^{n+1} \ \big| \ \prod_{i=1}^{n+1} x_i = 1,\ x_i > 0 \ \text{for } i=1,\dots,n\big\}.
\]
Initially studied by \c{T}i\c{t}eica and later proved by Calabi (\cite{calabi1972complete}), the hypersurface \(\mathcal{F}\) is a hyperbolic affine 
sphere whose affine metric is Riemannian with zero sectional curvature, and whose Pick tensor is everywhere 
non-vanishing. The boundary of the cone to which it is asymptotic coincides with the first orthant in $\R^{n+1}$. Using our construction from Section \ref{sec:immersions_pseudo_spheres}, we obtain an immersion 
\(\sigma^- : \R^n \to \mathbb{H}^{n+1,n}\) which is spacelike, maximal, horizontal, and 
\(\phi\)-anti-invariant. After applying the \(\mathbb{R}\)-action by isometries to \(\sigma^-\), the resulting 
immersion \(\varsigma^- : \mathbb{R}^{n+1} \to \mathbb{H}^{n+1,n}\) gives rise to what have been called 
\emph{pseudo-flat} submanifolds, recently studied in \cite{moriani2024rigidity}. These are flat \((n+1)\)-dimensional 
submanifolds of \(\mathbb{H}^{n+1,n}\) that arise as orbits of the action of the Cartan subgroup of 
\(\mathrm{SO}_0(n+1,n+1)\) and generalize the Barbot surface in $\mathbb H^{2,1}\cong\AdS^3$ (see \cite[\S 3.3]{moriani2024rigidity} for more details).

\section{Applications}
\subsection{A boundary problem}\label{sec:boundary_problem}
The goal of this section is to show how the main construction presented in Section \ref{sec:main_construction} finds an application to the problem of existence of maximal submanifolds in 
\(\hat{\mathbb{H}}^{n+1,n}\) and $\hat{\mathbb H}^n_\tau$ with prescribed boundary data. More precisely, by introducing a notion of a \emph{hyperplane boundary set}
\(\Lambda_\Omega \subset \partial_\infty \hat{\mathbb{H}}^{n+1,n}\) arising from a strictly convex domain 
\(\Omega \subset \mathbb{RP}^n\), we first prove the existence of a $1$-parameter family $\{\sigma_t\}_{t\in\R}$ of \(n\)-dimensional spacelike, maximal, complete, 
horizontal, and \(\phi\)-anti-invariant submanifolds in $\hat{\mathbb H}^{n+1,n}$ whose boundary at infinity gives rise to a $1$-parameter family $\{\Lambda_\Omega^t\}_{t\in\R}$ of subsets in $\Lambda_\Omega$. Subsequently, we consider the projection $\overline{\Lambda}_\Omega$ of $\Lambda_\Omega$ onto the partial flag manifold of lines and hyperplanes in $\mathbb{R}^{n+1}$, which coincides with $\partial_\infty \hat{\mathbb H}^n_\tau$.
This allows us to prove the existence and uniqueness of an $n$-dimensional spacelike, maximal, and Lagrangian submanifold in $\hat{\mathbb H}^n_\tau$ whose boundary at infinity is equal to $\overline{\Lambda}_\Omega$. The maximal submanifolds in \(\hat{\mathbb{H}}^{n+1,n}\) studied in the first case differ from those previously studied in the literature (\cite{bonsante2010maximal, andersson2012cosmological, CTT, labourie2024plateau, seppi2023complete}). 
\\ \\
\noindent In this section we will work primarily with the boundary at infinity of the projective model of the pseudohyperbolic space, which is described by
\[
\partial_\infty\hat{\mathbb H}^{\,n+1,n}
=\{[v,\varphi]\in\mathbb P\big(\R^{n+1}\oplus(\R^{n+1})^*\big) \ \mid \ \varphi(v)=0\},
\]
and, in this specific case, it coincides with the boundary at infinity of the pseudosphere \(\mathbb S^{n,n+1}\). This description follows from the bi-linear form \(\hat\g\) of signature $(n+1,n+1)$ on \(\R^{n+1}\oplus(\R^{n+1})^*\) introduced in Section \ref{sec:immersions_pseudo_spheres}. In what follows, we denote the space $\R^{n+1}\oplus(\R^{n+1})^*$ by $V$. Recall that, given a strictly convex subset $\Omega \subset \mathbb{RP}^n$, one can consider its lift to the space $\R^{n+1}\setminus\{0\}$ given by the full cone 
\(
\mathcal C(\Omega) = \mathcal C_+(\Omega) \sqcup \mathcal C_-(\Omega)
\)
(see Section \ref{sec:convex_projective_manifolds}).
\begin{defi}
Let $\Omega\subset\R\mathbb P^n$ be any strictly convex subset, then the set
\[
\Lambda_\Omega := \{ [v, \varphi_v] \in \mathbb P(V) \mid v \in \partial \mathcal C(\Omega), \ [\varphi_v] \ \text{unique supporting hyperplane at} \ [v] \},
\] is called a \emph{hyperplane boundary set}. The elements $[v],[\varphi_v]$ represent the projective classes in $\R\mathbb P^n,(\R\mathbb P^n)^*$, respectively.
\end{defi}
\noindent There is another geometric way to construct the subset $\Lambda_\Omega$, which also makes its topology clearer and sheds light on its definition. We denote by $\mathrm{Ein}_0^{n,n}$ the subspace of $\partial_\infty \hat{\mathbb H}^{n+1,n}$ consisting of pairs $[v,\varphi]\in\mathbb{P}(V)$ such that $\varphi(v)=0$, $v\neq 0$, and $\varphi\neq 0$. This subspace admits a projection onto the partial flag variety
$$
\mathcal F_{1,n}(\R^{n+1}) :=\big\{([v],[\varphi])\in\mathbb{RP}^n\times(\mathbb{RP}^n)^* \mid \varphi(v)=0\big\},
$$
which consists of pairs made of a line and a hyperplane in $\R^{n+1}$, with the line contained in the hyperplane. Indeed, the projection is simply defined as
\begin{align*}
p: \ &\mathrm{Ein}_0^{n,n} \longrightarrow \mathcal F_{1,n}(\R^{n+1}) \\ & [v,\varphi]\longmapsto ([v],[\varphi]).
\end{align*}
Since $\Omega$ is strictly convex, we obtain a copy of $\partial\Omega$ embedded homeomorphically in $\mathcal F_{1,n}(\R^{n+1})$ as the graph of the bijection $G:\partial\Omega\to\partial\Omega^*$ defined by $G([v]) := [\varphi_v]$, where $[\varphi_v]$ represents the unique supporting hyperplane at $[v]$. The preimage of $\mathrm{Graph}(G)\cong\partial\Omega$ under the projection $p$ yields exactly the subset $\Lambda_\Omega$ inside $\mathrm{Ein}_0^{n,n}$. In particular, by restricting the projection $p$ to the subset $\Lambda_\Omega$ we obtain a surjective map
$
p:\Lambda_\Omega \to \partial\Omega,
$
whose fiber over a point $[v]$ is given by all linear functionals that are real multiples of $\varphi_v$, and therefore generate the same supporting hyperplane at $[v]$. We therefore obtain a homeomorphism between $\Lambda_\Omega$ and $\partial\Omega \times \R$. \\ \\
\noindent Specializing the construction of Section \ref{sec:immersions_pseudo_spheres} to the case where the affine immersion $f:M\to\mathbb{R}^{n+1}$ is convex and, more specifically, a hyperbolic affine sphere, we obtain a maximal, horizontal and $\phi$–anti-invariant map $
\sigma^- = (-f, \nu) : M \to \mathbb{H}^{n+1,n},$
whose induced metric is positive definite and coincides with the Blaschke metric of the hyperbolic affine sphere (Theorem \ref{prop:immersion_sigma}). As a consequence of Theorem \ref{thm:asymptotic_hyperbolic_sphere}, every such affine immersion $f$ is asymptotic to a sharp convex cone $\mathcal{C}_+(\Omega)$ over a properly convex set $\Omega \subset \mathbb{RP}^n$, or equivalently, the induced affine metric is complete. In particular, since the dual immersion is also a hyperbolic affine sphere in $(\mathbb{R}^{n+1})^*$, the same description applies to $\nu$, with dual convex set $\Omega^*$ and cone $\mathcal{C}_+^*(\Omega)$. By an abuse of notation, we shall continue to denote by $\sigma^-$ the map induced in the projective model $\hat{\mathbb{H}}^{n+1,n}$. %We now first aim to show that the boundary at infinity of $\sigma^-$ provides an example of a hyperplane boundary set.
%We now aim to prove that, given the data of a hyperplane limit set \(\Lambda_\Omega\) in \(\partial_\infty\hat{\mathbb H}^{\,n+1,n}\), one can find an \(n\)-dimensional submanifold in \(\mathbb H^{\,n+1,n}\), given precisely by \(\sigma^-(M)\), whose boundary at infinity equals the prescribed datum \(\Lambda_\Omega\). 
Recall that there is an $\R$-action on $\hat{\mathbb H}^{n+1,n}$ given by $\Psi_t\big([v,\varphi]\big):=[e^tv,e^{-t}\varphi]$ for $t\in\R$. In particular, since $[e^tv,e^{-t}\varphi]\in\mathbb P(V)$ it defines the same point as $[v,e^{-2t}\varphi]\in\mathbb P(V)$. For any fixed $t\in \R$ we can consider the immersion $\sigma_t^-:=\Psi_t\circ\sigma^-:M\to\hat{\mathbb H}^{n+1,n}$ defined as $\sigma_t^-(p)=[-f(p),e^{-2t}\nu(p)]$. In the following, we want to understand the structure of the boundary at infinity of these immersions and relate it to the hyperplane boundary subset introduced above. Recall that the boundary at infinity of any immersion \(\sigma:M\to\hat{\mathbb H}^{n+1,n}\) is defined as \begin{equation}
    \partial_\infty\sigma:=\overline{\sigma(M)}^{\mathbb P}\cap\partial_\infty\hat{\mathbb H}^{n+1,n}
\end{equation} where $\overline{\sigma(M)}^{\mathbb P}$ denotes the projective closure of $\sigma(M)$ inside the projective model of the pseudohyperbolic space $\hat{\mathbb H}^{n+1,n}\subset\R\mathbb P^{2n+1}$. %In order not to encumber the proof of the theorem, we shall first concentrate on the following lemmawhich tells us that, if we start from the immersion $\sigma^-$ as described above, then its boundary at infinity is an example of a hyperplane limit set. %\noindent The definition of $\Lambda_\Omega$ is well posed, since whenever $\Omega \subset \mathbb{RP}^n$ is strictly convex, for every element $[v]$ in its boundary there exists a unique supporting hyperplane at $[v]$, which can be represented by the projective class $[\varphi_v]$ as an element of the boundary of the dual convex set.
\begin{lemma}\label{lem:buondary_pseudosphere}
Let $f:M\to\R^{n+1}$ be a hyperbolic affine sphere asymptotic over a strictly convex subset $\Omega\subset\mathbb{RP}^n$ and let $\sigma^-_t=[-f,e^{-2t}\nu]:M\to\hat{\mathbb H}^{n+1,n}$ be the $1$-parameter family of induced immersions. Then the boundary at infinity of $\sigma^-_t$, for $t\in\R$ fixed, gives rise to a subset $\Lambda_\Omega^t$ of the hyperplane boundary set $\Lambda_\Omega$.
\end{lemma}
\begin{proof}
Let us first recall the relation between $f$ and $\nu$, namely that $\nu_p(f(p))=1$ for any $p\in M$ and $\nu\big(f_*(X)\big)=0$ for any $X\in\Gamma(TM)$ (see Definition \ref{def:conormal_map}). In other words, the vector $f(p)$ is transverse to the hyperplane $f_*(T_pM)$, which is identified with $\Ker \ \nu_p$ for any $p\in M$. Let us denote by $\pi_{\mathbb P}:\R^{n+1}\setminus\{0\}\to\R\mathbb P^n$ the map defining the projective class of a non-zero vector. Since $f$ is asymptotic to the sharp convex cone $\mathcal{C}_+(\Omega)$ we know that for any $[v]\in\pi_{\mathbb P}\big(\partial\mathcal C(\Omega)\big)\cong\partial\Omega$ there exists a sequence of points $\{p_k\}\subset M$ leaving every compact sets and scalars $\lambda_k\in\R\setminus\{0\}$ such that $\lim_k\lambda_k\cdot f(p_k)=v$. Viceversa, for every $\{p_k\}\subset M$ as above, the projective limit of $\{f(p_k)\}$ takes value in $\pi_{\mathbb P}\big(\partial\mathcal C(\Omega)\big)$. Moreover, the same discussion applies to $\nu$ and $\pi_\mathbb P^*:(\R^{n+1})^*\setminus\{0\}\to(\R\mathbb P^n)^*$ by Theorem \ref{thm:asymptotic_hyperbolic_sphere} and Proposition \ref{prop:isometry_dual_immersion}. Let us denote by $[f_\infty]$ and $[\nu_\infty]$ the limit points in $\pi_{\mathbb P}\big(\partial\mathcal C(\Omega)\big)$ and $\pi_{\mathbb P}^*\big(\partial\mathcal C(\Omega^*)\big)$, respectively. In particular, any representative in $[f_\infty]$ is  a vector in $\partial\mathcal C(\Omega)$ and we will show that any representative in $[\nu_\infty]$ gives rise to the same hyperplane in $\R^{n+1}$ containing the line generated by $f_\infty$. In fact, under the identification between $(V,\hat\g)$ and $(\R^{n+1,n+1}, b)$ using the standard scalar product in $\R^{n+1}$ (see Section \ref{sec:immersion_para-complex_hyperbolic}), we know that $\vl\vl f(p_k)\vl \vl\to\infty$ and $\vl\vl\nu_{p_k}\vl\vl_*\to\infty$ being both proper map by Theorem \ref{thm:asymptotic_hyperbolic_sphere}. Together with the previous considerations, this yields $\nu_\infty(f_\infty)=0$, i.e. in the limiting configuration $\Ker \ \nu_\infty$ is a hyperplane containing the vector $f_\infty$. Equivalently, for any fixed $t\in\mathbb{R}$, the boundary at infinity of $\sigma_t^-$ is formed by all projective classes $[-f_\infty,e^{-2t}\nu_\infty]\in\mathbb P(V)$, which then determine a subset $\Lambda_\Omega^t\subset\Lambda_\Omega$.
\end{proof}
\begin{remark}
An interesting feature of this construction is that the projective model of the pseudosphere $\hat{\mathbb S}^{n,n+1}$ and the pseudohyperbolic space $\hat{\mathbb H}^{n+1,n}$ share the same boundary at infinity, which we denoted by $\mathrm{Ein}^{n,n}$ (see Section \ref{sec:hyperbolic_generale_p_q}). Indeed, if we consider the vector space $V$ endowed with the bilinear form $\hat\g$ introduced in Section \ref{sec:immersions_pseudo_spheres}, we obtain the decomposition
$
V = V_{\hat\g>0}\sqcup V_{\hat\g=0}\sqcup V_{\hat\g<0},
$
according to the sign of $\hat\g$. Projectivising gives
$
\mathbb P(V)=\hat{\mathbb H}^{n+1,n}\sqcup \mathrm{Ein}^{n,n}\sqcup \hat{\mathbb S}^{n,n+1}.
$
From the proof of Lemma \ref{lem:buondary_pseudosphere}, it follows that the boundaries at infinity of $\sigma^-_t$ and $\sigma^+_t$ define the same subset of the Einstein universe $\mathrm{Ein}^{n,n}$. 
\end{remark}
\noindent Each set $\Lambda_\Omega^t$ is homeomorphic to a copy of $\partial\Omega \times \{t\}$ as a subset of $\Lambda_\Omega$. In particular, this shows that as $t \in \R$ varies, the immersions $\sigma_t^-$ all have distinct boundaries at infinity, which together generate a foliation of the hyperplane boundary set $\Lambda_\Omega$. The key point is that the functionals $e^{-2t}\nu_\infty$ define the same supporting hyperplane for $f_\infty$ as $t \in \R$ varies, but the projective classes $[f_\infty, e^{-2t}\nu_\infty] \in \mathbb P(V)$ are different. \\ \\
 \noindent Similarly, one can address a corresponding problem for the para-complex hyperbolic space, which also admits a natural boundary at infinity defined as
\[
\partial_\infty \hat{\mathbb H}_\tau^n := \{ ([v], [\varphi]) \in \mathbb{RP}^n \times (\mathbb{RP}^n)^* \mid \varphi(v) = 0 \},
\] so that it can be identified with the partial flag viariety $\mathcal{F}_{1,n}(\R^{n+1})$ introduced above. Notice that the whole $1$-parameter family of immersions $\{\sigma_t^-\}_{t\in\R}$ induces the same spacelike Lagrangian immersion $\bar\sigma$ into $\hat{\mathbb H}_\tau^n$ (see Section \ref{sec:immersion_para-complex_hyperbolic}). At this point, however, it is clear that the results in $\partial_\infty \hat{\mathbb H}_\tau^n$ follow directly from the work done for the pseudohyperbolic space.
 %In other words, setting $V:=\mathbb{R}^{n+1}\oplus(\mathbb{R}^{n+1})^*$ we must determine all points $[v,\varphi]\in\mathbb{P}(V)$ satisfying $\varphi(v)=0$ and for which there exists a sequence ${p_k}\subset M$ and scalars $\lambda_k\in\mathbb{R}\setminus\{0\}$ such that $\lambda_k\cdot\sigma^-(p_k)\to (v,\varphi)$ as $k\to+\infty,$ (i.e. convergence in the ambient vector space, up to projectivization). %The same discussion applies to the map $\sigma^+$, whose boundary at infinity is defined as$\partial_\infty\sigma^+:=\overline{\sigma^+(M)}^{\mathbb P}\cap\partial_\infty\hat{\mathbb S}^{n,n+1}$.
\begin{cor}\label{cor:boundary_map_in_paracomplex_hyperbolic}
The boundary at infinity of the induced map $\bar\sigma:M\to\hat\h_\tau^n$ is described as $$\partial_\infty\bar\sigma=\big\{([v],[\varphi_v])\in\R\mathbb P^n\times(\R\mathbb P^n)^* \ | \ [v]\in\partial\Omega, \ [\varphi_v] \ \text{supporting hyperplane in} \ [v] \big\}.$$ In particular, since $\Omega$ is strictly convex then $\partial_\infty\bar\sigma$ is equal to the graph of the bijection $
G: \partial\Omega \to \partial\Omega^*, \ G([v]) := [\varphi_v]
$.
\end{cor}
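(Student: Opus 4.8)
The plan is to deduce the description of $\partial_\infty\bar\sigma$ from Lemma~\ref{lem:buondary_pseudosphere} by pushing the boundary at infinity of $\sigma^-$ forward along the boundary map of the principal $\mathbb R$-bundle $\hat\pi^-:\hat{\mathbb H}^{n+1,n}\to\hat{\mathbb H}_\tau^n$. First I would record that $\bar\sigma=\hat\pi^-\circ\sigma^-$ is the projection to $\hat\h_\tau^n$ of the immersion produced in Section~\ref{sec:immersions_pseudo_spheres} (cf.\ Theorem~\ref{prop:immersion_in_H_tau} and Corollary~\ref{cor:immersion_paracomplex_fixed_t}), and that $\partial\hat\pi^-$ is explicitly given by $[v,\varphi]\mapsto([v],[\varphi])$ on the locus of $\partial_\infty\hat{\mathbb H}^{n+1,n}=\mathrm{Ein}^{n,n}$ where $v\neq 0$ and $\varphi\neq 0$; this is exactly the locus on which the residual action of $\mathbb R\cong\mathrm{SO}_0(1,1)$ on the Einstein universe (the restriction of $(t,(v,\varphi))\mapsto(e^tv,e^{-t}\varphi)$) is free, and $\partial\hat\pi^-$ is the induced quotient map there. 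Since a hyperplane boundary set $\Lambda_\Omega$ consists of pairs $[v,\varphi_v]$ with $v\in\partial\mathcal C(\Omega)$ nonzero and $\varphi_v$ a genuine, hence nonzero, supporting functional, $\Lambda_\Omega$ lies inside this domain.

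Next I would establish the commutation $\partial_\infty\bar\sigma=\partial\hat\pi^-\big(\partial_\infty\sigma^-\big)$. The inclusion $\supseteq$ follows from the continuity of $\hat\pi^-$ on the projective model, once one observes — via the computation $\nu_\infty(f_\infty)=0$ carried out in the proof of Lemma~\ref{lem:buondary_pseudosphere} — that every projective limit point of $\sigma^-$ already lies in the open set on which $\partial\hat\pi^-$ is defined. For $\subseteq$, given a sequence $p_k$ leaving every compact subset of $M$ with $\bar\sigma(p_k)$ converging to a boundary point of $\hat\h_\tau^n$, I would lift to the sequence $\sigma^-(p_k)$, normalise representatives, extract a projectively convergent subsequence in $\mathbb P(V)$, note that its limit lies in $\partial_\infty\sigma^-$, and check that it projects to the prescribed boundary point of $\bar\sigma$. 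Feeding in Lemma~\ref{lem:buondary_pseudosphere}, and using that $v\in\partial\mathcal C(\Omega)$ if and only if $[v]\in\partial\Omega$ and that supporting hyperplanes at $v$ descend to supporting hyperplanes at $[v]$, this gives the first assertion.

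For the second assertion, when $\Omega$ is strictly convex I would invoke the facts recalled in Section~\ref{sec:convex_projective_manifolds}: the dual boundary $\partial\Omega^*$ is $C^1$, every $[v]\in\partial\Omega$ admits a \emph{unique} supporting hyperplane, and the resulting assignment $G([v]):=[\varphi_v]$ defines a bijection $\partial\Omega\to\partial\Omega^*$, with inverse the analogous construction for $\Omega^*$. Consequently the set of pairs $([v],[\varphi_v])$ above collapses to $\{\,([v],G([v]))\mid[v]\in\partial\Omega\,\}$, i.e.\ the graph of $G$.

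The step I expect to be the main obstacle is the commutation $\partial_\infty\bar\sigma=\partial\hat\pi^-(\partial_\infty\sigma^-)$: one must control how the $\mathbb R$-action degenerates on $\partial_\infty\hat{\mathbb H}^{n+1,n}$ and verify that the limiting configurations produced by $\sigma^-$ never leave the locus on which $\partial\hat\pi^-$ is a well-defined continuous map — which is exactly what the relation $\nu_\infty(f_\infty)=0$ guarantees. Everything else is a direct translation through Lemma~\ref{lem:buondary_pseudosphere} together with the convex-projective-geometry facts already established.
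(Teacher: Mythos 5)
Your argument is correct and follows essentially the same route the paper intends: the paper gives no separate proof of this corollary, treating it as an immediate consequence of Lemma \ref{lem:buondary_pseudosphere} together with the boundary-level projection $\partial\hat\pi_-\colon[v,\varphi]\mapsto([v],[\varphi])$ and the convex-duality facts recalled in Section \ref{sec:convex_projective_manifolds}, which is precisely what you spell out (your commutation step $\partial_\infty\bar\sigma=\partial\hat\pi^-(\partial_\infty\sigma^-)$, justified via $\nu_\infty(f_\infty)=0$ and the fact that $\Lambda_\Omega$ avoids the locus $\{v=0\}\cup\{\varphi=0\}$, is the only detail the paper leaves implicit). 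The strict-convexity part is handled exactly as in the paper, by the uniqueness of supporting hyperplanes recalled there.
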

\noindent Recall we have a projection $p:\mathrm{Ein}_0^{n,n}\subset\partial_\infty\hat{\mathbb H}^{n+1,n}\to\partial_\infty\hat{\mathbb H}^n_\tau$ and, as explained at the beginning of the section, every hyperplane boundary set \(\Lambda_\Omega\) maps to a subset $\bar{\Lambda}_\Omega$ homeomorphic to $\partial\Omega$. This allows us to solve a boundary value problem with prescribed boundary data in the partial flag variety $\mathcal{F}_{1,n}(\R^{n+1})$.
\begin{theorem}\label{cor:plateau_problem_paracomplex_hyperbolic}
Given the subset \(p\big(\Lambda_\Omega\big)=\bar{\Lambda}_\Omega\subset\partial_\infty\hat{\mathbb H}^n_\tau\cong\mathcal{F}_{1,n}(\R^{n+1})\) defined from a strictly convex subset $\Omega\subset\mathbb{RP}^n$, there exists a unique spacelike, maximal, Lagrangian, and complete \(n\)-submanifold in $\hat{\mathbb H}^n_\tau$ whose boundary at infinity is equal to \(\bar{\Lambda}_\Omega\).
\end{theorem}
\begin{proof}
Let $\Omega$ be the strictly convex subset of $\R\mathbb P^n$ defining the hyperplane boundary set $\Lambda_\Omega$. By Theorem \ref{thm:asymptotic_hyperbolic_sphere} we know there exists a unique hyperbolic affine sphere $f:M\to\R^{n+1}$ which is asymptotic to the boundary of the sharp convex cone $\mathcal C_+(\Omega)$ over $\Omega$. If we consider the induced immersion $\sigma^- = (-f, \nu) : M \to \hat{\mathbb H}^{\,n+1,n}$, then, thanks to Theorem \ref{prop:immersion_sigma} and Corollary \ref{cor:sigma_maximal_iff_proper_affine_sphere}, we know that $\sigma^-$ is spacelike (the induced metric is precisely the Blaschke metric $h$ of $f$), maximal, horizontal, and $\phi$–anti-invariant. Moreover, since $f$ is proper and $h$ is complete (\cite{cheng1977regularity, cheng1986complete}), by post-composing with the projection $\hat{\pi}^-:\hat{\mathbb H}^{n+1,n} \to \hat{\mathbb H}^n_\tau$, which gives the structure of a principal $\R$-bundle, we obtain an immersion
$
\bar{\sigma} = \hat{\pi}^- \circ \sigma^- : M \to \hat{\mathbb H}^n_\tau
$ that is spacelike, Lagrangian, and complete (see Section \ref{sec:immersion_para-complex_hyperbolic}). Using Corollary \ref{cor:boundary_map_in_paracomplex_hyperbolic}, one directly obtains that
$
\partial_\infty \bar{\sigma} = \overline{\Lambda}_\Omega.
$
Regarding the uniqueness, suppose there exists another spacelike, maximal, Lagrangian immersion $\bar{\sigma}':M\to\hat{\mathbb H}^n_\tau$ such that $\partial_\infty\bar\sigma'=\partial_\infty\bar\sigma=\mathrm{graph}(G)$, where $G:\partial\Omega\to\partial\Omega^*$ is the bijection defined as in Corollary \ref{cor:boundary_map_in_paracomplex_hyperbolic}. The immersion $\bar\sigma'$ corresponds to an hyperbolic affine sphere $f':M\to\R^{n+1}$ (see Corollary \ref{cor:sigma_maximal_iff_proper_affine_sphere} and Theorem \ref{thm:inverse_problem_lift}) which, by construction, it has to be asymptotic to the cone over $\Omega$. By the uniqueness result of Theorem \ref{thm:asymptotic_hyperbolic_sphere} this implies $f(M)=f'(M)$ and hence $\bar\sigma(M)=\bar\sigma'(M)$.
%The uniqueness of the submanifold in $\hat{\mathbb H}^n_\tau$ follows from the uniqueness of the hyperbolic affine sphere with given $\Omega \subset\mathbb{RP}^n$ (\cite{cheng1977regularity, cheng1986complete}).
\end{proof}
%\begin{proof}This is simply a consequence of the definition of $\partial_\infty \hat{\mathbb{H}}_\tau^n =\{([v],[\varphi])\in\R\mathbb P^n\times(\R\mathbb P^n)^* \ | \ \varphi(v)=0\}$ and of Theorem \ref{prop:buondary_pseudosphere}. Since $\Omega$ is strictly convex, each point in $\partial\Omega$ admits a unique supporting hyperplane, corresponding to a point in $\partial\Omega^*$. This defines a bijection between the boundaries of the two convex sets. In particular, the set $\partial_\infty \bar\sigma$ is obtained as the graph of the bijection$G: \partial\Omega \to \partial\Omega^*, \ G([v]) := [\varphi_v],$which is homeomorphic to $\partial\Omega \subset \mathbb{RP}^n$.\end{proof}
\noindent Another interesting direction of this construction is to compute the boundary at infinity of the immersion obtained after applying the isometric flow generated by the $\R$-action introduced in (\ref{eq:matrix_invertible_elements}). In other words, if we consider
$
\varsigma^-(p,t) := (-e^t f(p), e^{-t} \nu_p),
$ we obtain an immersion $\varsigma^-: M \times \R \to \hat{\mathbb{H}}^{n+1,n}$ which is still spacelike, but no longer horizontal (see Definition \ref{def:horizontal_phi_anti_invariant}). Before stating the result, we recall that we can view $\overline{\Omega}$ and $\overline{\Omega}^*$ as subspaces of $\mathbb{P}(V)$ via the identifications
$
\overline{\Omega} \simeq \mathbb{P}\big(\mathcal{C}(\overline{\Omega}) \times \{\ast\}\big)$ and
$\overline{\Omega}^* \simeq \mathbb{P}\big(\{\ast\} \times \mathcal{C}(\overline{\Omega}^*)\big).
$
%We also consider the map$\hat{G}: \partial \mathcal{C}(\Omega) \to \partial \mathcal{C}(\Omega^*)$which associates to each vector on the boundary of the cone over $\Omega$ its supporting hyperplane, i.e. $\hat{G}(v) := \varphi_v$.
\begin{prop}\label{prop:boundary_immersion_into_paracomplex_hyperbolic}
In the setting above, the boundary $\partial_\infty\varsigma^-$ can be identified with $\Lambda_\Omega \cong \partial\Omega \times \R$, with two copies of $\overline{\Omega}$ and $\overline{\Omega^*}$ attached to $\Lambda_\Omega$ at the two ends at infinity. In particular, $\partial_\infty\varsigma^-$ is homeomorphic to $\mathbb S^n$.
\end{prop}
\begin{proof}
The part in the boundary made of $\Lambda_\Omega$ comes from the calculations of Lemma \ref{lem:buondary_pseudosphere}. In fact, for any fixed $t\in\R$, the boundary of the immersion $\sigma_t^-(\cdot)=\varsigma^-(\cdot,t)$ it can be parametrized as the set $\Lambda_\Omega^t$, whose union as $t$ varies form the whole hyperplane boundary set $\Lambda_\Omega$. As for the remaining part, we need to compute the projective limit of the points $(-e^t f, e^{-t} \nu)$ as $t \to +\infty$ or $t \to -\infty$. In the first case, multiplying by $e^{-t}$ gives the projective limit $(-f,0)$; in the second, multiplying by $e^{t}$ yields $(0,\nu)$.
In other words, the projective closure of $\varsigma^-\big(M \times \R\big)$ also contains the points of the form
$
\{(-f(p),0) \mid p \in M\} \ \text{and} \ \{(0,\nu(p)) \mid p \in M\}.
$
By the construction in Theorem \ref{thm:asymptotic_hyperbolic_sphere} (see also the subsequent discussion), these points, and their projective limits along a sequence $\{p_k\}\subset M$, correspond respectively to $\overline\Omega$ and $\overline\Omega^*$. Finally, since $\Omega$ and $\Omega^*$ are strictly convex, we are topologically considering two closed $n$-dimensional discs glued at the two ends at infinity of $\Lambda_\Omega\cong\partial\Omega\times\R$ which gives an homeomorphic copy of $\mathbb S^n$.
\end{proof}

\subsection{Harmonic maps in the symmetric spaces}\label{sec:5}
\noindent In this section we will mainly study two maps, $\mathcal G_f$ and $\mathcal G_{\sigma^+}$, constructed from the affine immersion $f:M\to\mathbb R^{n+1}$ and $\sigma^+=(f,\nu):M\to\mathbb S^{n,n+1}$, with values in the symmetric spaces of $\SL(n+1,\mathbb R)$ and $\SO_0(n+1,n+1)$, respectively. The main result will be to show that when $f$ is a hyperbolic affine sphere with positive definite Blaschke metric, then the induced map $\mathcal G_f$ into the symmetric space is harmonic. Using then the inclusion $\SL(n+1,\mathbb R)\hookrightarrow\SO_0(n+1,n+1)$ described in Section \ref{sec:Lie_groups_inclusion}, we will obtain as a corollary that the map $\mathcal G_{\sigma^+}$ into the symmetric space of $\SO_0(n+1,n+1)$ is also harmonic.
\subsubsection{Harmonic maps}\label{sec:harmonic_maps}\hfill\vspace{0.3em}\\
We now briefly introduce the necessary definitions for the theory of harmonic maps between (pseudo)-Riemannian manifolds, together with a lemma that will be useful later on. In the literature, there are numerous references concerning harmonic maps, but we shall primarily rely on the book (\cite{eells1983selected}). \\ \\ Let $(N_1,g_1)$ be a Riemannian manifold and $(N_2,g_2)$ a pseudo-Riemannian manifold of dimensions $n_1$ and $n_2$, respectively. We denote by $\nabla^{g_i}$ the corresponding Levi-Civita connections for $i=1,2$. Given a smooth map $\psi:(N_1,g_1)\to(N_2,g_2)$, one can regard $\mathrm d\psi$ as a smooth section of $T^*N_1\otimes\psi^*TN_2$. Let $\nabla^\otimes$ be the connection induced on $T^*N_1\otimes\psi^*TN_2$ by the Levi-Civita connection $(\nabla^{g_1})^*$ and $\nabla^{\psi^*g_2}$ of $T^*N_1$ and $\psi^*TN_2$ respectively. \begin{defi}
    The \emph{tension field} of a smooth map $\psi:(N_1,g_1)\to(N_2,g_2)$ is the smooth section of $\psi^*TN_2$ given by \begin{equation}
        \tau(\psi,g_1,g_2):=\trace_{g_1}\nabla^\otimes\mathrm d\psi \ .
    \end{equation}The map $\psi$ is called \begin{enumerate}
    \item[(i)] \emph{totally geodesic} if $(\nabla^\otimes\mathrm d\psi)(X,Y)=0$ for any $X,Y\in\Gamma(TN_1)$;
        \item[(ii)] \emph{harmonic} if $\tau\equiv 0$.
\end{enumerate}\end{defi}\noindent In a local $g_1$-orthonormal basis $\{e_1, \dots, e_{n_1}\}$, the tension field of a map $\psi : (N_1, g_1) \to (N_2, g_2)$ can be computed more explicitly as follows:
\begin{equation}\label{eq:tension_field_local_basis}
   \tau(\psi,g_1,g_2)=\sum_{i=1}^{n_1}(\nabla^\otimes\mathrm d\psi)(e_i,e_i)=\sum_{i=1}^{n_1}\Big(\nabla^{\psi^*g_2}_{e_i}\big(\mathrm d\psi(e_i)\big)-\mathrm d\psi\big(\nabla_{e_i}^{g_1}e_i\big)\Big)
\end{equation}
\begin{lemma}[{\cite[\S 5]{eells1964harmonic}}]\label{lem:composition_harmonic_maps}
Let $(N_i,g_i)$ be smooth Riemannian manifolds for $i=1,2,3$ and $\psi_j:(N_j,g_j)\to(N_{j+1},g_j)$ be smooth maps for $j=1,2$. Suppose that $\psi_1$ is harmonic and $\psi_2$ is totally geodesic, then the composition $\psi_2\circ\psi_1:(N_1,g_1)\to(N_3,g_3)$ is harmonic.
\end{lemma}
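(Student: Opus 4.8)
The plan is to reduce the statement to the classical composition formula for the second covariant derivative of a composite map (as in \cite{eells1964harmonic}) and then trace it with respect to $g_1$. Write $\psi := \psi_2 \circ \psi_1 : (N_1, g_1) \to (N_3, g_3)$, so that $\mathrm d\psi = \mathrm d\psi_2 \circ \mathrm d\psi_1$ is a smooth section of $T^*N_1 \otimes \psi^* TN_3$. The first step is to establish the identity
\[
(\nabla^\otimes \mathrm d\psi)(X,Y) = \mathrm d\psi_2\big((\nabla^\otimes \mathrm d\psi_1)(X,Y)\big) + (\nabla^\otimes \mathrm d\psi_2)\big(\mathrm d\psi_1(X),\, \mathrm d\psi_1(Y)\big),
\]
for all $X,Y \in \Gamma(TN_1)$, where in the second summand $\nabla^\otimes \mathrm d\psi_2$, a priori a section of $T^*N_2 \otimes T^*N_2 \otimes \psi_2^* TN_3$, is pulled back along $\psi_1$ and then evaluated on the vectors $\mathrm d\psi_1(X), \mathrm d\psi_1(Y) \in \Gamma(\psi_1^* TN_2)$. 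I would prove this by fixing a point, choosing local frames adapted to $\psi_1$ and $\psi_2$, and expanding $\nabla^{\psi^* g_3}_X\big(\mathrm d\psi_2(\mathrm d\psi_1(Y))\big) - \mathrm d\psi_2\big(\mathrm d\psi_1(\nabla^{g_1}_X Y)\big)$: the piece $\nabla^{\psi_1^* g_2}_X \mathrm d\psi_1(Y) - \mathrm d\psi_1(\nabla^{g_1}_X Y)$ is by definition $(\nabla^\otimes \mathrm d\psi_1)(X,Y)$, and the remaining terms reassemble into $(\nabla^\otimes \mathrm d\psi_2)(\mathrm d\psi_1(X), \mathrm d\psi_1(Y))$ precisely because all connections involved are the (torsion-free, metric-compatible) Levi-Civita connections and their pullbacks, so no spurious curvature or torsion contributions arise.

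The second step is to take the $g_1$-trace. Choosing a local $g_1$-orthonormal frame $\{e_1, \dots, e_{n_1}\}$ of $TN_1$ and using formula (\ref{eq:tension_field_local_basis}) for the tension field, the composition identity yields
\[
\tau(\psi, g_1, g_3) = \mathrm d\psi_2\big(\tau(\psi_1, g_1, g_2)\big) + \sum_{i=1}^{n_1} (\nabla^\otimes \mathrm d\psi_2)\big(\mathrm d\psi_1(e_i),\, \mathrm d\psi_1(e_i)\big).
\]
Now the two hypotheses eliminate the two summands separately. Since $\psi_1$ is harmonic, $\tau(\psi_1, g_1, g_2) \equiv 0$, so the first term vanishes. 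Since $\psi_2$ is totally geodesic, $(\nabla^\otimes \mathrm d\psi_2)(U,W) = 0$ for all $U,W \in \Gamma(TN_2)$; applying this to $U = W = \mathrm d\psi_1(e_i)$ kills every term of the sum pointwise. Hence $\tau(\psi, g_1, g_3) \equiv 0$, which is exactly the assertion that $\psi = \psi_2 \circ \psi_1$ is harmonic.

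The only genuinely delicate point is the bookkeeping in the composition formula: one must carefully keep apart the three induced connections $\nabla^{\psi_1^* g_2}$, $\nabla^{\psi_2^* g_3}$, and $\nabla^{\psi^* g_3}$ (the last being the pullback of the second along $\psi_1$) and check that the cross terms recombine exactly into $(\nabla^\otimes \mathrm d\psi_2)(\mathrm d\psi_1(\cdot),\mathrm d\psi_1(\cdot))$ with nothing left over. I would also note that, although in the applications of this lemma later in the paper the targets carry pseudo-Riemannian metrics, the argument is unchanged, since the trace defining the tension field is taken solely against the Riemannian metric $g_1$, for which orthonormal frames always exist.
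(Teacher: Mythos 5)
Your argument is correct and is exactly the classical composition formula for the tension field that the paper invokes by citing Eells--Sampson: the paper gives no independent proof of this lemma, and the identity you derive is the same one it uses later as equation (\ref{eq:tension_field_of_composition}) in the proof of Corollary \ref{cor:gauss_lift_harmonic}. Your closing remark that only $g_1$ needs to be Riemannian (so the pseudo-Riemannian targets appearing in the applications cause no trouble) is also consistent with how the lemma is used in Theorem \ref{thm:harmonic_map_SO(n+1,n+1)}.
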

\subsubsection{Homogeneous and symmetric spaces for $\SL(n+1,\R)$}\label{sec:homogeneous_space_SL(n+1,R)}\hfill\vspace{0.3em}\\
We will provide an explicit and Lie-theoretic description of the symmetric space for $\mathrm{SL}(n+1, \mathbb{R})$ as well as another homogeneous space, and we will study some properties of a natural projection that can be defined from one to another. The classical material on Lie groups and Lie algebras that we will use can be found in \cite{besse2007einstein}.
 \\ \\ We denote by $K := \mathrm{SO}(n+1)$ the unique (up to conjugation) maximal compact subgroup of $G := \mathrm{SL}(n+1, \mathbb{R})$, by $\mathrm{S}H := \mathrm{S}\big(\mathrm{GL}(n, \mathbb{R}) \times \mathbb{R}^*\big)$ and $H = \mathrm{SL}(n, \mathbb{R})$, given by the reducible inclusion in $\mathrm{SL}(n+1, \mathbb{R})$, and by $C := \mathrm{SO}(n)$, the maximal compact of $\mathrm{SL}(n, \mathbb{R})$, again included reducibly in $\mathrm{SL}(n+1, \mathbb{R})$. As observed in Section \ref{sec:paracomplex_geometry}, the two quotients $G / \mathrm{S}H$ and $G / H$ can be identified, respectively, with the para-complex hyperbolic space $\h_\tau^n$ equipped with its para-Kähler metric $(\g, \ome, \p)$ and the pseudosphere $\mathbb{S}^{n, n+1}$ equipped with its para-Sasaki metric $(\eta, \zeta, \phi, g)$ (see Lemma \ref{lem:para_sasaki_pseudosphere}). In particular, the space $G/\mathrm{S}H$ can be identified with pairs $(l, P)$, where $l$ and $P$ are, respectively, a line and a hyperplane in $\mathbb{R}^{n+1}$ such that $l \pitchfork P$. In contrast, $G/H$ is the set of pairs $(v, P)$ where $v$ is now a vector in $\mathbb{R}^{n+1}$ such that $v\notin P$. With this description, it is easy to see that the quotient $\mathcal X := G / C$ is given by triples $(u, P, q)$ such that the pair $(u, P)$ belongs to $G / H$ and $q$ is an inner product defined on $P$. Finally, regarding the symmetric space $\mathbb{X}_{n+1} := G / K$, this is given by the set of positive definite symmetric bilinear forms on $\mathbb{R}^{n+1}$ with determinant equal to $1$. \\ \\ From the perspective of Lie algebras, the inclusion of the maximal compact $K < G$ and of the subgroup $H < G$ induce two well-known decompositions:
\begin{align}\label{eq:decomposition_lie_algebra_maximalcompact}
    &\mathfrak g=\mathfrak k\oplus\mathfrak p\quad \mathrm{ad_{\mathfrak{k}}-invariant}, \quad \mathfrak k:=\mathrm{Lie}(K) \ \text{and} \ \mathfrak p\cong T_{\Id}\mathbb X_{n+1} \\ &\label{eq:decomposition_lie_algebra_homogeneous}\mathfrak g=\mathfrak h\oplus\mathfrak m\quad \mathrm{ad_{\mathfrak{h}}-invariant}, \quad \mathfrak h:=\mathrm{Lie}(H) \ \text{and} \ \mathfrak m\cong T_{\Id}(G/H).
\end{align}Since $\mathfrak{g}=\mathfrak{sl}(n+1,\R)$ is the set of $(n+1)$-dimensional matrices with zero trace, we obtain the following description for the subspaces appearing in the decomposition: \begin{equation}
    \begin{aligned}
        &\mathfrak k=\{M\in\mathfrak{sl}(n+1,\R) \ | M^T=-M\} \\ &\mathfrak p=\{M\in\mathfrak{sl}(n+1,\R) \ | M^T=M\} \\ & \mathfrak h=\Big\{M=\begin{pmatrix}
            N & 0 \\ 0 & 0 
        \end{pmatrix} \ \Big| \ \trace{N}=0, \ N\in\mathfrak{gl}(n,\R)\Big\} \\ &\mathfrak m=\Big\{M=\begin{pmatrix}
            0 & v \\ w^T & 0
        \end{pmatrix} \ \Big| \ v,w\in\R^n\Big\}
    \end{aligned}
\end{equation}
A similar approach can be used with the subgroup $C = \mathrm{SO}(n) < G$, whose induced decomposition at the level of Lie algebras can be derived from those in (\ref{eq:decomposition_lie_algebra_maximalcompact}) and (\ref{eq:decomposition_lie_algebra_homogeneous}). Indeed, if we denote by $\mathfrak{c} := \mathrm{Lie}(C)=\{N\in\mathfrak{sl}(n,\R) \ | \ N^T=-N\}$, by construction we have that $\mathfrak{c} = \mathfrak{k} \cap \mathfrak{h}$ inside $\mathfrak{g}$, and therefore \begin{equation}\label{eq:decomposition_liealgebra_subspace_C}
    \mathfrak g=\mathfrak c\oplus(\mathfrak k\cap m)\oplus(\mathfrak p\cap\mathfrak h)\oplus(\mathfrak p\cap\mathfrak m) \quad \mathrm{ad_{\mathfrak c}-invariant},
\end{equation}where $T_{\Id}\mathcal X\cong(\mathfrak k\cap m)\oplus(\mathfrak p\cap\mathfrak h)\oplus(\mathfrak p\cap\mathfrak m)$. Moreover, each element of the direct sum has a description as a subset of trace-free matrices: \begin{equation}
    \begin{aligned}\label{eq:decomposition_liealgebra_intersections}
        &\mathfrak k\cap m=\Big\{M=\begin{pmatrix}
            0 & v \\ -v^T & 0
        \end{pmatrix} \ \Big| \ v\in\R^n\Big\} \\ & \mathfrak p\cap\mathfrak h=\Big\{M=\begin{pmatrix}
            N & 0 \\ 0 & 0 
        \end{pmatrix} \ \Big| \ N=N^T, \ N\in\mathfrak{sl}(n,\R)\Big\} \\ & \mathfrak p\cap\mathfrak m=\Big\{M=\begin{pmatrix}
            0 & v \\ v^T & 0
        \end{pmatrix} \ \Big| \ v\in\R^n\Big\}.
    \end{aligned}
\end{equation}
There is a natural projection $\pi_{n+1} : \mathcal{X} \to \mathbb{X}_{n+1}$ defined using the description of the spaces as quotients of $\mathrm{SL}(n+1, \mathbb{R})$. Given a point $(v,P,q)\in\mathcal X$ let us consider the unique scalar product $Q$ on $\R^{n+1}$ defined by the conditions: $Q|_P\equiv q$, $Q(v,w)=0$ for any $w\in P$, and $Q(v,v)=\lambda>0$ so that $\det(Q)=1$. By construction, the only possibility for the scalar $\lambda$ is to be equal to $\big(\det(q)\big)^{-1}$. Then, the aforementioned projection is defined as $\pi_{n+1}(v,P,q):=Q$. Before studying some fundamental properties of this map that will be useful later, let us recall that the spaces $\mathcal{X}$ and $\mathbb{X}_{n+1}$ are respectively endowed with a $G$-bi-invariant pseudo-Riemannian metric $\langle\cdot,\cdot\rangle_{\mathcal X}$ of signature $\big(\frac{n}{2}(n+3), n\big)$ and a Riemannian metric $\langle\cdot,\cdot\rangle_{\mathbb X_{n+1}}$, both induced by the Killing form on the Lie algebra.

\begin{lemma}\label{lem:submersion_to_symmetric_space}
The map $\pi_{n+1}:(\mathcal X,\langle\cdot,\cdot\rangle_{\mathcal X})\to(\mathbb X_{n+1},\langle\cdot,\cdot\rangle_{\mathbb X_{n+1}})$ is a pseudo-Riemannian submersion with negative-definite fiber isomorphic to $g(K/C)\cong\mathbb S^n$, whose tangent space at the identity point is identified with $\mathfrak k\cap\mathfrak m$.
\end{lemma}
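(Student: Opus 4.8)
The plan is to use that $C<K<G$, so that $\pi_{n+1}$ is, up to the identifications recalled above, the natural $G$-equivariant projection $G/C\to G/K$, $gC\mapsto gK$, a fibre bundle with typical fibre $K/C$. First I would check that this algebraic projection really coincides with the geometrically defined $\pi_{n+1}$: under the identification of $\mathcal X$ with triples $(v,P,q)$ and of $\mathbb X_{n+1}$ with unit-determinant scalar products, it suffices to verify at the identity point $eC$ that $gK$ corresponds to the scalar product $Q$ with $Q|_P=q$, $Q(v,P)=0$, $Q(v,v)=(\det q)^{-1}$ — which is immediate, the normalisation $\lambda=(\det q)^{-1}$ being exactly the one forced by $\det Q=1$ — and then to invoke the $G$-equivariance of both descriptions.

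Next I would pin down the vertical and horizontal distributions at $eC$. From $\mathfrak k=\mathfrak c\oplus(\mathfrak k\cap\mathfrak m)$ (immediate from the explicit matrix descriptions, or from a dimension count) together with the Cartan decomposition $\mathfrak g=\mathfrak k\oplus\mathfrak p$, one obtains $T_{eC}\mathcal X\cong(\mathfrak k\cap\mathfrak m)\oplus\mathfrak p$, and the kernel of $(\mathrm d\pi_{n+1})_{eC}$ — the tangent space to the fibre $K/C$ through $eC$ — is exactly $\mathfrak k\cap\mathfrak m$; this already proves the last assertion of the statement. Since the Cartan decomposition is orthogonal for the Killing form $B$, the restriction of $\langle\cdot,\cdot\rangle_{\mathcal X}$ to the vertical subspace $\mathfrak k\cap\mathfrak m$ is non-degenerate — in fact negative definite, $\mathfrak k\cap\mathfrak m$ consisting of antisymmetric matrices — so its $\langle\cdot,\cdot\rangle_{\mathcal X}$-orthogonal complement is precisely $\mathfrak p$, and $(\mathrm d\pi_{n+1})_{eC}$ carries $\mathfrak p$ onto $T_{eK}\mathbb X_{n+1}\cong\mathfrak p$ by the identity map.

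Since both metrics are induced by $B$, this last identification is an isometry at $eC$; by $G$-equivariance of $\pi_{n+1}$ and $G$-invariance of the two metrics, $\mathrm d\pi_{n+1}$ restricts to a linear isometry from each horizontal space onto the corresponding tangent space of $\mathbb X_{n+1}$, which is the definition of a pseudo-Riemannian submersion. For the fibre: the fibre through $eC$ is $K/C=\SO(n+1)/\SO(n)$, diffeomorphic to $\mathbb S^n$, with tangent space $\mathfrak k\cap\mathfrak m$ at $eC$ on which $B$ is negative definite; by homogeneity of the fibre and $G$-invariance of $\langle\cdot,\cdot\rangle_{\mathcal X}$ the induced metric is negative definite on every fibre, and being $\SO(n+1)$-invariant it is, up to a negative constant, the round metric of $\mathbb S^n$.

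The only point demanding real care is the matching of the geometrically defined $\pi_{n+1}$ with the algebraic projection $gC\mapsto gK$, where one must keep track of the precise $G$-actions on triples $(v,P,q)$ and on scalar products; once that is in place, everything else follows from the $B$-orthogonality of the Cartan decomposition and the resulting clean vertical/horizontal splitting, so no serious obstacle is anticipated.
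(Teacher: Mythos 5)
Your proposal is correct and follows essentially the same route as the paper's proof: identify $\pi_{n+1}$ with the $G$-equivariant projection $G/C\to G/K$, use the decompositions $\mathfrak g=\mathfrak k\oplus\mathfrak p$ and $\mathfrak k=\mathfrak c\oplus(\mathfrak k\cap\mathfrak m)$ to identify the vertical space at the identity with $\mathfrak k\cap\mathfrak m$ and the horizontal space with $\mathfrak p$, then spread this by $G$-invariance and identify the fiber with $K/C=\SO(n+1)/\SO(n)\cong\mathbb S^n$. Your explicit appeal to the $B$-orthogonality of the Cartan decomposition and to the negative definiteness of the Killing form on antisymmetric matrices only makes explicit what the paper leaves implicit.
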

\begin{proof}
The surjectivity of $\pi_{n+1}$, together with the surjectivity of its differential, follows directly from the definition, since at the level of $G$–homogeneous spaces it maps the class $gC$ to the class $gK$, where we recall that $C \subset K$. As for the differential, it is enough to perform the computation at the point $[\Id]$ and then use the $G$–invariance to conclude at every other point. Using the decompositions presented in (\ref{eq:decomposition_lie_algebra_maximalcompact}), (\ref{eq:decomposition_lie_algebra_homogeneous}) and (\ref{eq:decomposition_liealgebra_subspace_C}), it is clear that the complement of $\mathfrak c$ in $\mathfrak g$ contains $\mathfrak p$ (that is, the complement of $\mathfrak k$ in $\mathfrak g$). This allows us to conclude that the differential of $\pi_{n+1}$ restricts to the identity on $\mathfrak p$ and vanishes on its complement inside $\mathfrak g / \mathfrak c$, which is isomorphic to $\mathfrak k \cap \mathfrak m$. This argument also implies that the horizontal distribution is isomorphic to $\mathfrak p$ at $[\mathrm{Id}]$, while the vertical one is isomorphic to $\mathfrak k \cap \mathfrak m$. Again, by using $G$–invariance and $\mathrm{ad}$-invariance, the result holds at every other point. The fiber over a point $gK$ is isomorphic to $g(K/C)$, which is a copy of $\mathbb S^n$, since $K = \mathrm{SO}(n+1)$ and $C = \mathrm{SO}(n)$. Finally, the fact that it is also a local isometry when restricted to the horizontal distribution, namely
$
\pi_{n+1}^*\langle \cdot , \cdot \rangle_{\mathcal X} = \langle \cdot , \cdot \rangle_{\mathbb X_{n+1}},
$
follows from the discussion above.

\end{proof}
\begin{remark}
The fact that the fiber over each point is isomorphic to $\mathbb S^n$ also admits a geometric description in terms of the incidence geometry introduced at the beginning of this section. Recall that
$
\pi_{n+1}(v,P,q) = Q,
$
where $Q$ is the unique inner product on $\mathbb R^{n+1}$ with determinant one such that $P = v^{\perp_Q}$, $Q(v,v) = (\det q)^{-1}$, and $Q|_P = q$. Thus, given any inner product $Q$, its preimage via $\pi_{n+1}$ consists of all triples $(v,P,q)$ satisfying the properties above. In other words, given $Q$ and a vector $v \in \mathbb R^{n+1}$, the hyperplane $P$ and the inner product $q$ are completely determined and since $Q(v,v)=(\det q)^{-1}$ is fixed for any $v\in\R^{n+1}\setminus\{0\}$ we get
$
\pi_{n+1}^{-1}(Q) \cong \mathbb S^n.
$

\end{remark}

\subsubsection{Lifts of the immersions}\label{sec:lift_immersion_symmetric_space}\hfill\vspace{0.3em}\\
As seen in Section \ref{sec:immersions_pseudo_spheres}, given a hyperbolic affine sphere $f: M \to \mathbb{R}^{n+1}$ with positive definite Blaschke metric, we can construct the immersion $\sigma^+: M \to \mathbb{S}^{n,n+1}$, which is known to be timelike, maximal, horizontal, and $\phi$-anti-invariant. Projecting onto the para-complex hyperbolic space, we obtain another timelike, maximal, and Lagrangian immersion $\bar{\sigma}: M \to \mathbb{H}_\tau^n$. Moreover, from Section \ref{sec:inverse_problem} we know that this construction can be inverted, as the property of being Lagrangian is the only local obstruction to the existence of a horizontal lift from $\mathbb{H}_\tau^n$ to $\mathbb{S}^{n,n+1}$. In this section, starting from $f: M \to \mathbb{R}^{n+1}$ as above, we define a map $\mathcal{G}_f: M \to \mathbb{X}_{n+1}$ obtained as the composition $\pi_{n+1} \circ \tilde{\mathcal{G}}_f$, where $\pi_{n+1}$ is the pseudo-Riemannian submersion described in Lemma \ref{lem:submersion_to_symmetric_space} and $\tilde{\mathcal{G}}_f$ is a suitable lift of $\sigma^+$ to the space $\mathcal{X}$. \\ \\
Using the description of the homogeneous spaces of $\mathrm{SL}(n+1, \mathbb{R})$ considered in Section \ref{sec:homogeneous_space_SL(n+1,R)}, given $\sigma^+ = (f, \nu): M \to \mathbb{S}^{n,n+1}$, we define its lift $\tilde{\mathcal{G}}_f : M \to \mathcal{X}$ as $\tilde{\mathcal{G}}_f(p) := \big(f(p), f_*(T_p M), h_p(\cdot, \cdot)\big)$, where we are identifying the hyperplane $f_*(T_p M)$ of $\mathbb{R}^{n+1}$ with the kernel of the functional $\nu_p : M \to (\mathbb{R}^{n+1})^*$ (see Definition \ref{def:conormal_map}) and where $h_p(\cdot, \cdot)$ is the positive definite affine metric on the hyperplane $f_*(T_p M)$. The induced map in the symmetric space is simply given by $$\mathcal{G}_f(p) := (\pi_{n+1} \circ \tilde{\mathcal{G}}_f)(p) = h_p(\cdot, \cdot) \oplus \lambda(p) \cdot f(p),$$ where $\lambda(p) := \big(\det h_p\big)^{-1}$, and we are using the splitting $\R^{n+1}=f_*\big(T_pM\big)\oplus\R\cdot f(p)$. In the case where $\dim M = 2$, the map $\mathcal{G}_f$ was already considered in \cite{Labourie_cubic} (see also \cite{labourie2017cyclic}), where it was shown to be harmonic and conformal, hence minimal, and called the \emph{Blaschke lift} of the hyperbolic affine sphere $f: M \to \mathbb{R}^3$. In our more general context, the construction is summarized by the following diagram:
\begin{equation*}\begin{tikzcd}
                                                                                                              &                                      & \mathcal X \arrow[ld,] \arrow[rd, "\pi_{n+1}"] &                 \\
                                                                                                              & {\mathbb S^{n,n+1}} \arrow[d, "\pi_+"] &                                                                   & \mathbb X_{n+1} \\
M \arrow[r, "\bar\sigma"] \arrow[ru, "\sigma^+", shift left] \arrow[rruu, "\tilde{\mathcal G}_f", bend left=49] & \mathbb H_\tau^n                     &                                                                   &                
\end{tikzcd}\end{equation*}
\begin{theorem}\label{thm:harmonic_transvers_map_lift}
Let $f:M\to\R^{n+1}$ be a hyperbolic affine sphere with positive definite Blaschke metric $h$, then \begin{enumerate}
    \item[(i)] The lift $\tilde{\mathcal G}_f:M\to\mathcal X$ is tangent to the horizontal distribution generated by $\pi_{n+1}:\mathcal X\to\mathbb X_{n+1}$; \item[(ii)] The lift $\tilde{\mathcal{G}}_f$ is harmonic with respect to the pseudo-Riemannian metric $\langle\cdot,\cdot\rangle_{\mathcal X}$ on the target space induced by the Killing form, and the Blaschke metric on $M$. 
\end{enumerate}
\end{theorem}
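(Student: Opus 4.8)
The plan is to compute everything through an explicit $\SL(n+1,\R)$–valued moving frame for $\tilde{\mathcal G}_f$ and to read off both statements from its pulled–back Maurer–Cartan form.

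\emph{Frame and Maurer–Cartan form.} Normalise $f$ so that its affine mean curvature is $-1$; then $f$ is centroaffine with $\xi=f$, $S=-\Id$, and the Blaschke condition reads $\theta=\omega_h$. Fix a local, positively oriented, $h$–orthonormal frame $\{e_1,\dots,e_n\}$ of $TM$ and put $E_j:=f_*(e_j)$. Then $F:=(E_1\mid\cdots\mid E_n\mid f):M\to\SL(n+1,\R)$ is a local lift of $\tilde{\mathcal G}_f$ to $G=\SL(n+1,\R)$ (its determinant is $1$ because $\theta=\omega_h$), relative to which $\mathcal X=G/C$ with $C=\SO(n)$. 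Using the structure equations of $f$, the identity $\xi=f$, and the splitting $\nabla=\nabla^h+A$ with $A$ the difference tensor (so that $h(A(X)Y,Z)=-\tfrac12 C(X,Y,Z)$ is totally symmetric), one computes
\begin{equation*}
\alpha:=F^{-1}\mathrm dF,\qquad
\alpha(e_j)=\begin{pmatrix}\omega^k_{\ell}(e_j) & \mathbf e_j\\[2pt] \mathbf e_j^{\top} & 0\end{pmatrix}
+\begin{pmatrix}h\big(A(e_j)e_\ell,e_k\big) & 0\\[2pt] 0 & 0\end{pmatrix},
\end{equation*}
with $(\omega^k_\ell)$ the antisymmetric Levi–Civita connection matrix of $h$ in $\{e_i\}$. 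In the notation of \eqref{eq:decomposition_liealgebra_subspace_C} the $\mathfrak c$–component $\alpha_{\mathfrak c}$ is precisely the Levi–Civita $1$–form of $h$; the first summand has symmetric off–diagonal block $(\mathbf e_j,\mathbf e_j^{\top})$ and hence lies in $\mathfrak p\cap\mathfrak m$; the matrix $\big(h(A(e_j)e_\ell,e_k)\big)_{k\ell}$ is symmetric and, since $f$ is a proper affine sphere, trace–free, so lies in $\mathfrak p\cap\mathfrak h$; and the $\mathfrak k\cap\mathfrak m$–component of $\alpha$ vanishes identically. Under the trivialisation of $\tilde{\mathcal G}_f^*T\mathcal X$ induced by $F$, the differential $\mathrm d\tilde{\mathcal G}_f$ corresponds to $\beta:=\alpha-\alpha_{\mathfrak c}$, which therefore takes values in $\mathfrak p=(\mathfrak p\cap\mathfrak h)\oplus(\mathfrak p\cap\mathfrak m)$; since the horizontal distribution of $\pi_{n+1}$ corresponds to $\mathfrak p$ under the same trivialisation (Lemma \ref{lem:submersion_to_symmetric_space}), this is statement $(i)$. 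Write $\beta=\beta^{\mathfrak h}+\beta^{\mathfrak m}$ for the two components.

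\emph{Reduction to the canonical connection.} Since $\langle\cdot,\cdot\rangle_{\mathcal X}$ is the restriction of the $\mathrm{Ad}$–invariant Killing form to $\mathfrak m'=\mathfrak c^{\perp}$, the homogeneous space $\mathcal X=G/C$ is naturally reductive; hence its Levi–Civita connection differs from the canonical connection $\nabla^{\mathrm{can}}$ of the reductive splitting $\mathfrak g=\mathfrak c\oplus\mathfrak m'$ by the tensor $(X,Y)\mapsto\tfrac12[X,Y]_{\mathfrak m'}$, which is skew and therefore does not affect any trace. Consequently $\tau(\tilde{\mathcal G}_f,h,\langle\cdot,\cdot\rangle_{\mathcal X})$ equals the tension field of $\tilde{\mathcal G}_f$ computed with $\nabla^{\mathrm{can}}$; as $\nabla^{\mathrm{can}}$ pulls back via $F$ to $\mathrm d+\mathrm{ad}(\alpha_{\mathfrak c})$, this tension field is
\begin{equation*}
\sum_{i=1}^n\Big(e_i\!\cdot\!\beta(e_i)+[\alpha_{\mathfrak c}(e_i),\beta(e_i)]-\beta(\nabla^h_{e_i}e_i)\Big)\in\mathfrak m'.
\end{equation*}
Because $\mathrm{ad}(\mathfrak c)$ preserves $\mathfrak p\cap\mathfrak h$ and $\mathfrak p\cap\mathfrak m$ separately, this splits into a $(\mathfrak p\cap\mathfrak h)$–part and a $(\mathfrak p\cap\mathfrak m)$–part, which are treated independently.

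\emph{Vanishing.} For the $(\mathfrak p\cap\mathfrak m)$–part one uses that $\beta^{\mathfrak m}(e_i)=\begin{psmallmatrix}0 & \mathbf e_i\\ \mathbf e_i^{\top}& 0\end{psmallmatrix}$ is constant, so $e_i\!\cdot\!\beta^{\mathfrak m}(e_i)=0$, while a one–line matrix identity based on the antisymmetry of $\alpha_{\mathfrak c}(e_i)$ gives $[\alpha_{\mathfrak c}(e_i),\beta^{\mathfrak m}(e_i)]=\beta^{\mathfrak m}(\nabla^h_{e_i}e_i)$, so the remaining two terms cancel. The $(\mathfrak p\cap\mathfrak h)$–part equals $\sum_i(\nabla^h_{e_i}A)(e_i)\in\mathrm{End}(TM)$, and here the assumption that $f$ is a hyperbolic — in particular proper — affine sphere is used crucially: by Lemma \ref{lem:total_symmetry_nabla_C} the tensor $(X,Y,Z,W)\mapsto h\big((\nabla^h_XA)(Y)Z,W\big)$ is totally symmetric, hence $h\big(\sum_i(\nabla^h_{e_i}A)(e_i)Z,W\big)=\trace\big((\nabla^h_ZA)(W)\big)=\trace_h(\nabla^hA)(Z,W)$, which vanishes by Lemma \ref{lem:trace_covariant_pick_tensor}; nondegeneracy of $h$ then gives $\sum_i(\nabla^h_{e_i}A)(e_i)=0$. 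Thus $\tau(\tilde{\mathcal G}_f,h,\langle\cdot,\cdot\rangle_{\mathcal X})=0$, which is statement $(ii)$.

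The main obstacle is the $(\mathfrak p\cap\mathfrak h)$–part of the last step: identifying it with the endomorphism–valued trace $\sum_i(\nabla^h_{e_i}A)(e_i)$ and proving that this full trace — not merely its scalar trace — vanishes, which is exactly where the proper–affine–sphere condition enters, via the interplay of Lemmas \ref{lem:total_symmetry_nabla_C} and \ref{lem:trace_covariant_pick_tensor}. By contrast the natural–reductivity reduction, the $(\mathfrak p\cap\mathfrak m)$–computation, and the identification of $\alpha_{\mathfrak c}$ with the Levi–Civita connection of $h$ are comparatively formal.
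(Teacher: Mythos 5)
Your proof is correct, and at its core it is the same argument as the paper's: the same decomposition $(\mathfrak k\cap\mathfrak m)\oplus(\mathfrak p\cap\mathfrak h)\oplus(\mathfrak p\cap\mathfrak m)$, the same identification of the blocks of $\mathrm d\tilde{\mathcal G}_f$ (tangent vector, its $h$-dual, and the Pick data), and the same final vanishing via the total symmetry of $\nabla^h A$ (Lemma \ref{lem:total_symmetry_nabla_C}) combined with $\trace_h(\nabla^h A)=0$ (Lemma \ref{lem:trace_covariant_pick_tensor}). The genuine difference is the packaging: the paper computes $\mathrm d\tilde{\mathcal G}_f$ directly from the incidence description of $\mathcal X$ (Grassmannian variation of $f_*(T_pM)$, variation of $h$) and then evaluates the tension field componentwise with the pulled-back connection, whereas you work with the explicit frame $F=(f_*(e_1)\mid\cdots\mid f_*(e_n)\mid f)\in\SL(n+1,\R)$ and its Maurer--Cartan form, and you justify the connection used on the target by the natural reductivity of $(\mathcal X,\langle\cdot,\cdot\rangle_{\mathcal X})$: the skew difference tensor $\tfrac12[\cdot,\cdot]_{\mathfrak m'}$ between the Levi--Civita and canonical connections drops out of the trace, so the tension field may be computed with $\mathrm d+\mathrm{ad}(\alpha_{\mathfrak c})$. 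This last step is a worthwhile addition, since it makes rigorous the componentwise treatment of $\nabla^{\tilde{\mathcal G}_f^*\langle\cdot,\cdot\rangle_{\mathcal X}}$ that the paper's proof uses implicitly; your cancellation in the $\mathfrak p\cap\mathfrak m$ block plays the role of the paper's ``$\nabla^h$-compatibility of $h$'' computation, and your identification of the $\mathfrak p\cap\mathfrak h$ block of the tension field with $\sum_i(\nabla^h_{e_i}A)(e_i)$ matches the paper's $\sum_i(\nabla^h_{e_i}C)(e_i,\cdot,\cdot)$ via $C=-2h(A\cdot,\cdot)$.
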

\begin{proof}
Recall that $\tilde{\mathcal{G}}_f(p) := \big(f(p), f_*(T_p M), h_p(\cdot, \cdot)\big)$, for a point $p\in M$. The first step is to compute the differential of the map along a vector field on $M$. For the first component we simply obtain $D_X f$, which we know is equal to $f_* X$ thanks to the structure equations (\ref{eq:structure_equations_f}) and the fact that $f$ is a hyperbolic affine sphere, hence $S = -\mathrm{Id}$. For the second component, we need to compute the variation of $f_*(TM)$ considered, pointwise, as an element of the Grassmannian $\mathrm{Gr}_n(\mathbb{R}^{n+1})$. Given an element $P \in \mathrm{Gr}_n(\mathbb{R}^{n+1})$, we have an identification $T_P \mathrm{Gr}_n(\mathbb{R}^{n+1}) \cong \mathrm{Hom}\big(P, \mathbb{R}^{n+1}/P\big)$. Thus, in the case $P \equiv f_*(T_p M)$, we have $\varphi(X_p) = h_p(X_p, \cdot)f(p)$ for every $\varphi \in T_P \mathrm{Gr}_n(\mathbb{R}^{n+1})$ and every $X_p \in T_p M$, as a consequence of the structure equations (\ref{eq:structure_equations_f}) in the centroaffine case. In other words, the second component of the differential of $\tilde{\mathcal G}_f$ along $X$ is identified with $f_*\big(X^{\dagger}\big)$, where $X^{\dagger}$ is the $1$-form $h(X,\cdot)$ corresponding to $X$ through $h$. The third component is simply given by $\nabla_X h$, where $\nabla$ is the affine connection of the hyperbolic affine sphere, which thus coincides with the tensor $C(X, \cdot, \cdot)$ introduced in (\ref{eq:tensoreC}). At this point, recalling that $T_{\mathrm{Id}} \mathcal{X} \cong (\mathfrak{k} \cap \mathfrak{m}) \oplus (\mathfrak{p} \cap \mathfrak{h}) \oplus (\mathfrak{p} \cap \mathfrak{m})$, whose expression for each component of the direct sum is described in (\ref{eq:decomposition_liealgebra_intersections}), we can represent $\mathrm{d} \tilde{\mathcal{G}}_f(X)$ as the following block matrix: \begin{equation}
    \begin{pmatrix}
        \nabla_Xh & X \\  X^{\dagger} & 0
    \end{pmatrix}
\end{equation} for $X\in\Gamma(TM)$. Since $\nabla_X h$ is a smooth section of $S^2(T^*M)$ and satisfies $\mathrm{tr}_h(\nabla_X h)=0$ for every $X \in \Gamma(TM)$ (see Proposition \ref{prop:properaffinesphere_trace_less}), the element $\mathrm{d}\tilde{\mathcal{G}}_f(X)$ lies only in the blocks $\mathfrak{p} \cap \mathfrak{h} =\{M=\begin{psmallmatrix}   N & 0 \\ 0 & 0  \end{psmallmatrix} \ | \ N=N^T, \ N\in\mathfrak{sl}(n,\R)\} \ \text{and} \ \mathfrak{p} \cap \mathfrak{m} =\{M=\begin{psmallmatrix}    0 & v \\ v^T & 0 \end{psmallmatrix} \ | \ v\in\R^n\}.$
In other words, the map $\tilde{\mathcal{G}}_f$ is transverse to the distribution generated by the Lie algebra $\mathfrak{k} \cap \mathfrak{m}$, that is, to the vertical distribution of the submersion $\pi_{n+1} : \mathcal{X} \to \mathbb{X}_{n+1}$ (see Lemma \ref{lem:submersion_to_symmetric_space}), hence point $(i)$ is proven. \newline 
For point $(ii)$ we start by computing the following element:
$$ (\nabla^\otimes \mathrm d\tilde{\mathcal G}_f)(X,Y) = \nabla_X^{\tilde{\mathcal G}_f^*\langle\cdot,\cdot\rangle_{\mathcal X}}\big(\mathrm d\tilde{\mathcal G}_f(Y)\big) - \mathrm d\tilde{\mathcal G}_f\big(\nabla^h_X Y\big) $$
for every $X,Y \in \Gamma(TM)$, where $\nabla^h$ denotes the Levi–Civita connection of $h$, and all the other connections appearing in the formula correspond to the appropriate induced connections on the bundles, as explained in Section \ref{sec:harmonic_maps}. From the computations already carried out we obtain that \begin{align*}
    &\nabla_X^{\tilde{\mathcal G}_f^*\langle\cdot,\cdot\rangle_{\mathcal X}}\big(\mathrm d\tilde{\mathcal G}_f(Y)\big)=\big(\nabla^h_XY,\nabla^h_X\big(Y^{\dagger}\big),\nabla_X^h\big(C(Y,\cdot,\cdot)\big)\big) \ , \\ & \mathrm d\tilde{\mathcal G}_f\big(\nabla^h_X Y\big)=\big(\nabla^h_XY,\big(\nabla_X^hY\big)^{\dagger},C(\nabla^h_XY,\cdot,\cdot)\big).
\end{align*}
Taking the difference of the equatons component-wise, only two terms remain to be studied. The first is given by: \begin{align*}
    \big(\nabla^h_X(Y^{\dagger})\big)(Z)-\big(\nabla_X^hY\big)^{\dagger}(Z)&=X\cdot\big(Y^\dagger(Z)\big)-Y^\dagger\big(\nabla^h_XZ\big)-h(\nabla_X^hY,Z) \\ & =X\cdot h(Y,Z)-h(Y,\nabla_X^hZ)-h(\nabla_X^hY,Z), \\ & =0 \tag{$h$ is compatible with $\nabla^h$}
\end{align*}for all $Z\in\Gamma(TM)$. While the second, slightly more involved, is given by: \begin{align*}
    \Big(\nabla_X^h\big(C(Y,\cdot,\cdot)\big)-C(\nabla^h_XY,\cdot,\cdot)\Big)(Z,W)&=X\cdot C(Y,Z,W)-C(Y,\nabla_X^hZ,W) \\ & \ \ \ \ \ -C(Y,Z,\nabla_X^hW)-C(\nabla_X^hY,Z,W) \\ &=(\nabla_X^hC)(Y,Z,W)
\end{align*} for all $Z,W\in\Gamma(TM)$. In the end, we obtain $$ (\nabla^\otimes \mathrm d\tilde{\mathcal G}_f)(X,Y) =\big(0,0,(\nabla_X^hC)(Y,\cdot,\cdot)\big). $$
Given a $h$-orthonormal local basis $\{e_1,\dots,e_n\}$, according to (\ref{eq:tension_field_local_basis}), the tension field $\tau$ of the map $\tilde{\mathcal G}_f$ can be computed as \begin{align*}
    \tau=\sum_{i=1}^n(\nabla^\otimes \mathrm d\tilde{\mathcal G}_f)(e_i,e_i)=\Big(0,0,\sum_{i=1}^n(\nabla_{e_i}^hC)(e_i,\cdot,\cdot)\Big).\end{align*}
Using the symmetry of $(\nabla^h_{\cdot}C)(\cdot,\cdot,\cdot)$ (see Lemma \ref{lem:total_symmetry_nabla_C}), the term $\sum_{i=1}^n(\nabla_{e_i}^hC)(e_i,\cdot,\cdot)$, when evalueated on a pair of vector fields $X,Y\in\Gamma(TM)$ is exactly equal to the trace of the covariant derivative with respect to $\nabla^h$ of the Pick tensor $A = h^{-1} C$, i.e. $\trace{(\nabla^h_XA)(Y)}$, which is known to be zero in the case of a proper affine sphere (see Lemma \ref{lem:trace_covariant_pick_tensor}). As a consequence, we obtain that the tension field $\tau$ associated with the map $\tilde{\mathcal{G}}_f$ is identically zero, and therefore $\tilde{\mathcal{G}}_f$ is harmonic.

\end{proof}
\begin{cor}\label{cor:gauss_lift_harmonic}
Given a hyperbolic affine sphere $f:M\to\R^{n+1}$ with positive definite affine metric $h$ its Blaschke lift $\mathcal G_f=\pi_{n+1}\circ\tilde{\mathcal G}_f:M\to\mathbb X_{n+1}$ is harmonic.
\end{cor}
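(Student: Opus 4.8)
The plan is to derive this as a purely formal consequence of Theorem~\ref{thm:harmonic_transvers_map_lift}, which already supplies the two facts we need about $\tilde{\mathcal G}_f$, by combining the composition law for tension fields with the observation (Lemma~\ref{lem:submersion_to_symmetric_space}) that $\pi_{n+1}$ is a pseudo-Riemannian submersion. Concretely, I would start from the second-order chain rule
\[
\nabla^{\otimes}\mathrm d\big(\pi_{n+1}\circ\tilde{\mathcal G}_f\big)=\mathrm d\pi_{n+1}\circ\big(\nabla^{\otimes}\mathrm d\tilde{\mathcal G}_f\big)+\big(\nabla^{\otimes}\mathrm d\pi_{n+1}\big)\big(\mathrm d\tilde{\mathcal G}_f,\mathrm d\tilde{\mathcal G}_f\big),
\]
which holds with no restriction on the signatures of the metrics involved, and then take the trace with respect to the (positive definite) Blaschke metric $h$ on $M$ to obtain
\[
\tau(\mathcal G_f)=\mathrm d\pi_{n+1}\big(\tau(\tilde{\mathcal G}_f)\big)+\trace_h\big[\big(\nabla^{\otimes}\mathrm d\pi_{n+1}\big)\big(\mathrm d\tilde{\mathcal G}_f,\mathrm d\tilde{\mathcal G}_f\big)\big].
\]

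The first summand vanishes at once: $\tilde{\mathcal G}_f$ is harmonic by Theorem~\ref{thm:harmonic_transvers_map_lift}(ii), so $\tau(\tilde{\mathcal G}_f)=0$. For the second summand, because $h$ is positive definite I may choose a genuine $h$-orthonormal frame $\{e_1,\dots,e_n\}$ on $M$ (all signs equal $+1$), whence the trace equals $\sum_{i=1}^{n}\big(\nabla^{\otimes}\mathrm d\pi_{n+1}\big)\big(\mathrm d\tilde{\mathcal G}_f(e_i),\mathrm d\tilde{\mathcal G}_f(e_i)\big)$; by Theorem~\ref{thm:harmonic_transvers_map_lift}(i) each $\mathrm d\tilde{\mathcal G}_f(e_i)$ lies in the horizontal distribution $\mathcal H\subset T\mathcal X$ of $\pi_{n+1}$. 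Thus the argument reduces to showing that the second fundamental form $\nabla^{\otimes}\mathrm d\pi_{n+1}$ vanishes on pairs of horizontal vectors. I would verify this by the usual O'Neill computation: for basic horizontal vector fields $X,Y$ on $\mathcal X$, $\pi_{n+1}$-related to $\bar X,\bar Y$ on $\mathbb X_{n+1}$, the horizontal component of $\nabla^{\mathcal X}_X Y$ is the horizontal lift of $\nabla^{\mathbb X_{n+1}}_{\bar X}\bar Y$, whereas $\mathrm d\pi_{n+1}$ annihilates vertical vectors and restricts to an isometry on $\mathcal H$; hence $\big(\nabla^{\otimes}\mathrm d\pi_{n+1}\big)(X,Y)=\nabla^{\mathbb X_{n+1}}_{\bar X}\bar Y-\mathrm d\pi_{n+1}\big(\nabla^{\mathcal X}_X Y\big)=0$, and tensoriality together with the symmetry of $\nabla^{\otimes}\mathrm d\pi_{n+1}$ promotes this to all horizontal arguments. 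Combining the two vanishings yields $\tau(\mathcal G_f)\equiv 0$, which is the assertion.

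The step I expect to require the most care is precisely the last one, namely justifying the O'Neill decomposition of the Levi-Civita connection in the pseudo-Riemannian setting of $\pi_{n+1}:\mathcal X\to\mathbb X_{n+1}$. This turns out to be harmless: by Lemma~\ref{lem:submersion_to_symmetric_space} the fibers are negative definite (each a copy of $\mathbb S^n$), hence non-degenerate, and so is their orthogonal complement $\mathcal H$, which is all one needs for the classical proof of O'Neill's formulas to carry over verbatim; equivalently, one may record that a pseudo-Riemannian submersion with non-degenerate fibers is a harmonic morphism and that post-composing a harmonic map with horizontal differential with such a morphism preserves harmonicity. I would also point out explicitly that Lemma~\ref{lem:composition_harmonic_maps} cannot be invoked directly here, since $\mathcal X$ is only pseudo-Riemannian and $\pi_{n+1}$ is a submersion rather than a totally geodesic map; the feature of $\pi_{n+1}$ that actually drives the proof is the vanishing of its second fundamental form on horizontal pairs, which $\tilde{\mathcal G}_f$ is tailored to exploit by Theorem~\ref{thm:harmonic_transvers_map_lift}(i).
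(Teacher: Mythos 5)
Your proposal is correct and follows essentially the same route as the paper: the composition formula for the tension field, vanishing of $\tau(\tilde{\mathcal G}_f)$ by Theorem \ref{thm:harmonic_transvers_map_lift}(ii), and the vanishing of $\nabla^{\otimes}\mathrm d\pi_{n+1}$ on horizontal pairs using Theorem \ref{thm:harmonic_transvers_map_lift}(i) together with Lemma \ref{lem:submersion_to_symmetric_space}. Your explicit justification that the O'Neill identities persist in the pseudo-Riemannian setting because the fibers are non-degenerate is a welcome (if minor) sharpening of the paper's argument, which uses the same fact implicitly.
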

\begin{proof}
Since $\mathcal G_f$ is the composition of two smooth maps, there is a standard formula relating the tension field $\tau(\mathcal G_f)$ with that of $\tilde{\mathcal G}_f$, together with another term depending on $\pi_{n+1}$ (see \cite{eells1964harmonic, eells1983selected} for instance). More precisely, given a local $h$–orthonormal frame ${e_1,\dots,e_n}$ on $M$, we have
\begin{equation}\label{eq:tension_field_of_composition}
\tau(\mathcal G_f) = \mathrm d\pi_{n+1}\big(\tau(\tilde{\mathcal G}_f)\big)+\sum_{i=1}^n \big(\nabla^\otimes \mathrm d\pi_{n+1}\big)\big(\mathrm d\tilde{\mathcal G}_f(e_i), \mathrm d\tilde{\mathcal G}_f(e_i)\big).
\end{equation}
By point $(ii)$ of Theorem \ref{thm:harmonic_transvers_map_lift}, we know that $\tau\big(\tilde{\mathcal G}_f\big)\equiv 0$, and we will now show that also the second term appearing in (\ref{eq:tension_field_of_composition}) vanishes. For any fixed index $i\in\{1,\dots,n\}$ we have \begin{align*}
    \big(\nabla^\otimes \mathrm d\pi_{n+1}\big)\big(\mathrm d\tilde{\mathcal G}_f(e_i), \mathrm d\tilde{\mathcal G}_f(e_i)\big)&=\nabla_{\mathrm d\tilde{\mathcal G}_f(e_i)}^{\pi_{n+1}^*\langle\cdot,\cdot\rangle_{\mathbb X_{n+1}}}\Big(\mathrm d\pi_{n+1}\big(\mathrm d\tilde{\mathcal G}_f(e_i)\big)\Big) \\ & \ \ \ \ \ \ -\mathrm d\pi_{n+1}\Big(\nabla^{\langle\cdot,\cdot\rangle_{\mathcal X}}_{\mathrm d\tilde{\mathcal G}_f(e_i)}\mathrm d\tilde{\mathcal G}_f(e_i)\Big) . 
\end{align*}Instead, by point $(i)$ of Theorem \ref{thm:harmonic_transvers_map_lift}, we know that the vector field $\mathrm d\tilde{\mathcal G}f(e_i)$ is horizontal in $T\mathcal X$, and this allows us to obtain, using also that $\pi_{n+1}$ is a local isometry by Lemma \ref{lem:submersion_to_symmetric_space}, the following chain of equalities:
\begin{align*}
\mathrm d\pi_{n+1}\Big(\nabla^{\langle\cdot,\cdot\rangle_{\mathcal X}}_{\mathrm d\tilde{\mathcal G}_f(e_i)}\mathrm d\tilde{\mathcal G}_f(e_i)\Big)&=\mathrm d\pi_{n+1}\Bigg(\Big(\nabla^{\langle\cdot,\cdot\rangle_{\mathcal X}}_{\mathrm d\tilde{\mathcal G}_f(e_i)}\mathrm d\tilde{\mathcal G}_f(e_i)\Big)^{\mathcal H}\Bigg) \\ &=\nabla^{\langle\cdot,\cdot,\rangle_{\mathbb X_{n+1}}}_{\mathrm d\pi_{n+1}\big(\tilde{\mathrm d\mathcal G}_f(e_i)\big)}\mathrm d\pi_{n+1}\big(\mathrm d\tilde{\mathcal G}_f(e_i)\big) \tag{Thereom \ref{thm:harmonic_transvers_map_lift}} \\ &=\nabla_{\mathrm d\tilde{\mathcal G}_f(e_i)}^{\pi_{n+1}^*\langle\cdot,\cdot\rangle_{\mathbb X_{n+1}}}\Big(\mathrm d\pi_{n+1}\big(\mathrm d\tilde{\mathcal G}_f(e_i)\big)\Big), \tag{Lemma \ref{lem:submersion_to_symmetric_space}}
\end{align*}where the superscript $\mathcal H$ in the first line denotes the horizontal component of the vector field. Since this computation holds for every index $i \in\{1,\dots,n\}$, we conclude that the second term in (\ref{eq:tension_field_of_composition}) vanishes, and hence $\mathcal G_f$ is harmonic.

\end{proof}

\subsubsection{Induced map in the $\SO_0(n+1,n+1)$ symmetric space}\hfill\vspace{0.3em}\\
Recall that there is an inclusion of Lie groups $\iota: \mathrm{GL}(n+1,\mathbb R) \hookrightarrow \mathrm{SO}(n+1,n+1), $ explicitly given by $
\iota(M) = \begin{psmallmatrix} M & 0 \\ 0 & (M^T)^{-1} \end{psmallmatrix},
$ and described in Section \ref{sec:Lie_groups_inclusion}. In particular, by restricting the homomorphism $\iota$ to the subgroup $\SL(n+1,\R)$, the image is contained in $\SO_0(n+1,n+1)$. We now aim to describe the induced map at the level of symmetric spaces and to show how pre-composition with the Blaschke lift $\mathcal G_f$, introduced in Section \ref{sec:lift_immersion_symmetric_space}, still produces a harmonic map taking values in the symmetric space of $\mathrm{SO}_0(n+1,n+1)$. \\ \\
The maximal compact subgroup of $\SO_0(n+1,n+1)$ is isomorphic to $\SO(n+1)\times\SO(n+1)$ and the quotient space, denoted with $\mathbb Y_{n+1}$, can be described as the set of $(n+1)$-dimensional positive definite subspaces of $\R^{n+1}\oplus\R^{n+1}$ endowed with the non-degenerate bilinear form $$ b\big((x,y),(\tilde x,\tilde y)\big):=(x^T,y^T)B\begin{pmatrix}\tilde x \\ \tilde y\end{pmatrix}, \quad \text{where} \ B:=\frac{1}{2}\begin{pmatrix}
    0 & \Id \\ \Id & 0
\end{pmatrix} $$ already introduced in Section \ref{sec:isometric_model}. 
Given an element $N \in \mathbb X_{n+1}$, viewed as a symmetric positive-definite matrix with unit determinant, we can consider the graph $\Gamma_N$ of the induced linear map
$
\varphi_N : \mathbb R^{n+1} \to \mathbb R^{n+1}
$
as a subset of the direct sum vector space $\mathbb R^{n+1} \oplus \mathbb R^{n+1}$. The graph $\Gamma_N$ is positive definite as a subspace of $(\R^{n+1,n+1}, b)$. Indeed, given an arbitrary element $(x, N \cdot x) \in \Gamma_N$, we have that \begin{align*}
    b\big((x,N\cdot x),(x,N\cdot x)\big)&=\frac{1}{2}(x^T,x^T\cdot N^T)\begin{pmatrix}
        0 & \Id \\ \Id & 0
    \end{pmatrix}\begin{pmatrix}
        x \\ N\cdot x
    \end{pmatrix} \\ &=\frac{1}{2}\big(x^T\cdot N\cdot x+ x^T\cdot N^T\cdot x\big) \\ &=x^T\cdot N\cdot x>0, \ \text{for all} \ x\neq 0 
\end{align*}since $N$ is symmetric and positive definite. In other words, we have a well-defined smooth map \begin{equation}
    \begin{aligned}\label{eq:map_between_symmetric_spaces}
        \Phi: \ &\mathbb X_{n+1}\longrightarrow\mathbb Y_{n+1} \\ & N\longmapsto\Gamma_N
    \end{aligned}
\end{equation}from the symmetric space of $\SL(n+1,\R)$ to the symmetric space of $\SO_0(n+1,n+1)$. The existence of the left action of $\mathrm{SL}(n+1,\mathbb R)$ on $\mathbb X_{n+1}$, given by
$M \ast N := (M^T)^{-1} N M^{-1},\ \text{for} \ M \in \mathrm{SL}(n+1,\mathbb R) \ \text{and} \ N \in \mathbb X_{n+1},$
allows us to relate the map $\Phi$ with the homomorphism $\iota$. Indeed, the graph
$
\Gamma_{M \ast N} = \{ (x, (M^T)^{-1} N M^{-1} \cdot x) \ | \ x\in\R^{n+1} \}
$
still corresponds to a point of $\mathbb Y_{n+1}$, being positive definite and of dimension $n+1$. On the other hand, if one considers the action of $\iota(M) \in \mathrm{SO}_0(n+1,n+1)$ on the graph $\Gamma_N$, one obtains the subset
$
\{ (M \cdot x, (M^T)^{-1} N \cdot x) \ | \ x \in \mathbb R^{n+1} \},
$
which is clearly in bijection with $\Gamma_{M \ast N}$ via the identification $x \mapsto y := M^{-1} \cdot x$. We have therefore established the following result \begin{lemma}
The smooth map $\Phi:\mathbb X_{n+1}\to\mathbb Y_{n+1}$ defined in (\ref{eq:map_between_symmetric_spaces}) is $\iota$-equivariant, namely $$\Phi\big(M\ast N\big)=\iota(M)\cdot \Phi(N), \quad \text{for all} \  M\in\SL(n+1,\R), \ N\in\mathbb X_{n+1}$$
\end{lemma}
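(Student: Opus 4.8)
The plan is to verify the identity directly, by an explicit reparametrization of the graph, essentially carrying out the computation already sketched in the paragraph preceding the statement. First I would recall the two ingredients: $\Phi$ sends $N\in\mathbb X_{n+1}$ to the graph $\Gamma_N=\{(x,Nx)\mid x\in\R^{n+1}\}\subset\R^{n+1}\oplus\R^{n+1}$, while $\iota(M)=\begin{psmallmatrix}M&0\\0&(M^T)^{-1}\end{psmallmatrix}$ acts on a pair $(v_+,v_-)$ by $(Mv_+,(M^T)^{-1}v_-)$. Hence $\iota(M)\cdot\Phi(N)=\iota(M)\cdot\Gamma_N=\{(Mx,(M^T)^{-1}Nx)\mid x\in\R^{n+1}\}$.

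The key step is then the substitution $y:=Mx$, which is a bijection of $\R^{n+1}$ since $M$ is invertible, so that $x=M^{-1}y$; under this reindexing the set above becomes $\{(y,(M^T)^{-1}NM^{-1}y)\mid y\in\R^{n+1}\}$. Since $(M^T)^{-1}NM^{-1}=M\ast N$ is precisely the definition of the $\SL(n+1,\R)$–action on $\mathbb X_{n+1}$, the set in question equals $\Gamma_{M\ast N}=\Phi(M\ast N)$, and therefore $\Phi(M\ast N)=\iota(M)\cdot\Phi(N)$, which is the asserted equivariance.

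Two small sanity checks should be included, though both are immediate and are not the substance of the argument: that $M\ast N$ still belongs to $\mathbb X_{n+1}$, so that the left-hand side is well defined — it is symmetric because $N$ is, positive definite since $M\ast N=(M^{-1})^T N M^{-1}$ is congruent to $N$, and has unit determinant because $\det M=1$; and that $\iota(M)\in\SO_0(n+1,n+1)$, so that $\iota(M)$ maps the positive-definite $(n+1)$–plane $\Gamma_N$ to another such plane, i.e. preserves $\mathbb Y_{n+1}$ — this follows from $\iota(M)^T B\,\iota(M)=B$ together with connectedness of $\SL(n+1,\R)$, both already recorded in Section \ref{sec:Lie_groups_inclusion}. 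I do not expect any genuine obstacle here: the statement is a formal consequence of how the two homogeneous-space descriptions were set up, and the only point requiring a little care is correctly tracking the transpose–inverse appearing in the action of $\iota(M)$ on the second $\R^{n+1}$ factor.
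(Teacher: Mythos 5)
Your proposal is correct and follows essentially the same argument as the paper: act by $\iota(M)$ on the graph $\Gamma_N$ and reparametrize via $y=Mx$ to identify the result with $\Gamma_{M\ast N}$. The additional sanity checks (that $M\ast N\in\mathbb X_{n+1}$ and that $\iota(M)$ preserves positive-definite $(n+1)$-planes) are fine and only make explicit what the paper leaves implicit.
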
 

\noindent Returning to the construction of Section \ref{sec:lift_immersion_symmetric_space}, starting from a hyperbolic affine sphere $f : M \to \mathbb R^{n+1}$ with positive-definite affine metric $h$, we defined its Blaschke lift $\mathcal G_f : M \to \mathbb X_{n+1}$ as $\mathcal G_f(p)= h_p(\cdot,\cdot) \oplus \lambda(p),\ \lambda = (\det h)^{-1},$
which we know to be a harmonic map (Corollary \ref{cor:gauss_lift_harmonic}). If we now consider its post-composition with $\Phi$, we obtain the following result:
\begin{theorem}\label{thm:harmonic_map_SO(n+1,n+1)}
Given a hyperbolic affine sphere $f:M\to\R^{n+1}$ with positive definite affine metric, the smooth map $\mathcal G_{\sigma^+}:=\Phi\circ\mathcal G_f:M\to\mathbb Y_{n+1}$ is harmonic, where on $\mathbb Y_{n+1}$ we consider its Riemannian metric induced by the Killing form.
\end{theorem}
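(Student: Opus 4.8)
The plan is to exhibit $\Phi:\mathbb{X}_{n+1}\to\mathbb{Y}_{n+1}$ as a totally geodesic map between the two symmetric spaces, each equipped with the Riemannian metric induced by its Killing form, and then to conclude by Lemma \ref{lem:composition_harmonic_maps}: since $\mathcal{G}_f$ is harmonic by Corollary \ref{cor:gauss_lift_harmonic}, the composition $\mathcal{G}_{\sigma^+}=\Phi\circ\mathcal{G}_f$ of a harmonic map with a totally geodesic one is harmonic. All three spaces $M$, $\mathbb{X}_{n+1}$, $\mathbb{Y}_{n+1}$ are Riemannian in this setting — $M$ carries the positive definite Blaschke metric $h$ by hypothesis, and both symmetric spaces are of noncompact type — so Lemma \ref{lem:composition_harmonic_maps} applies verbatim, with $\psi_1=\mathcal{G}_f$ and $\psi_2=\Phi$.

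To prove that $\Phi$ is totally geodesic I would use the standard principle that a Lie group homomorphism $\rho:G_1\to G_2$ sending a fixed maximal compact $K_1$ into a maximal compact $K_2$ and intertwining the associated Cartan involutions, $d\rho\circ\theta_1=\theta_2\circ d\rho$, induces a totally geodesic map $\bar\rho:G_1/K_1\to G_2/K_2$ for the Killing metrics. Indeed, the geodesics through the base point of $G_i/K_i$ are exactly the curves $t\mapsto\exp(tX)\cdot o$ with $X$ in the $(-1)$-eigenspace $\mathfrak{p}_i$ of $\theta_i$, and $\rho(\exp(tX))=\exp(t\,d\rho(X))$ with $d\rho(X)\in\mathfrak{p}_2$; thus $\bar\rho$ maps these geodesics to geodesics, and by $G_1$-equivariance it maps every geodesic to a geodesic, whence $\nabla^{\otimes}d\bar\rho=0$. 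I would apply this to $\iota:\SL(n+1,\R)\hookrightarrow\SO_0(n+1,n+1)$: the containment $\iota(\SO(n+1))\subset\mathrm{S}(\mathrm{O}(n+1)\times\mathrm{O}(n+1))$ as the diagonal subgroup was already checked in Section \ref{sec:Lie_groups_inclusion}, and at the level of Lie algebras one only has to observe, using the description $\mathfrak{k}_{\SO(n+1,n+1)}=\{\tfrac12\begin{psmallmatrix} A+B & A-B \\ A-B & A+B\end{psmallmatrix}:A,B\in\mathfrak{so}(n+1)\}$ in the model $(\R^{n+1,n+1},b)$, that $d\iota(M)=\mathrm{diag}(M,-M^{T})$ sends a symmetric traceless $M$ (an element of $\mathfrak{p}_{\SL(n+1,\R)}$) to the element $\mathrm{diag}(M,-M)$, which is Killing-orthogonal to $\mathfrak{k}_{\SO(n+1,n+1)}$ and hence lies in $\mathfrak{p}_{\SO(n+1,n+1)}$, while it sends antisymmetric $M$ into $\mathfrak{k}_{\SO(n+1,n+1)}$. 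This is precisely the Cartan-involution intertwining.

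It then remains to identify $\Phi$ with the map $\bar\iota$ just described. Both are $\iota$-equivariant — for $\Phi$ this is the equivariance $\Phi(M\ast N)=\iota(M)\cdot\Phi(N)$ proved above, and for $\bar\iota$ it is tautological — and both send the base point of $\mathbb{X}_{n+1}$, namely the Euclidean inner product $\mathrm{Id}$ on $\R^{n+1}$, to the base point of $\mathbb{Y}_{n+1}$: one checks $\Phi(\mathrm{Id})=\Gamma_{\mathrm{Id}}=\Delta$, where $\Delta=\{(x,x):x\in\R^{n+1}\}$ is the positive-definite diagonal subspace of $(\R^{n+1,n+1},b)$, and $\Delta$ is exactly the subspace fixed by $\mathrm{S}(\mathrm{O}(n+1)\times\mathrm{O}(n+1))$, i.e.\ the base point of $\mathbb{Y}_{n+1}$. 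Since $\SL(n+1,\R)$ acts transitively on $\mathbb{X}_{n+1}$, equivariance forces $\Phi=\bar\iota$ on all of $\mathbb{X}_{n+1}$; hence $\Phi$ is totally geodesic (for the Killing metrics on $\mathbb{X}_{n+1}$ and $\mathbb{Y}_{n+1}$, which are exactly those appearing in Corollary \ref{cor:gauss_lift_harmonic} and in the statement), and Lemma \ref{lem:composition_harmonic_maps} yields the harmonicity of $\mathcal{G}_{\sigma^+}=\Phi\circ\mathcal{G}_f$.

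The geometric content is light once one trusts that $\Phi$ is totally geodesic, so the one place demanding care is the linear-algebra bookkeeping with the two Cartan decompositions — in particular, pinning down $\mathfrak{k}$ and $\mathfrak{p}$ for $\mathfrak{so}(n+1,n+1)$ in the off-diagonal model $(\R^{n+1,n+1},b)$ used throughout the paper, rather than in the standard $\mathrm{diag}(\mathrm{Id},-\mathrm{Id})$ model, and checking directly that $d\iota$ respects this splitting. Everything else in the argument is formal.
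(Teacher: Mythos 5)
Your argument is correct and follows essentially the same route as the paper: both establish that $\Phi$, being induced by the homomorphism $\iota$ which respects the maximal compact subgroups, is totally geodesic for the Killing metrics, and then conclude via Corollary \ref{cor:gauss_lift_harmonic} and Lemma \ref{lem:composition_harmonic_maps}. The only difference is that the paper delegates the total geodesy to a standard Lie triple system argument (Helgason), whereas you verify the Cartan-involution intertwining and the identification $\Phi=\bar\iota$ explicitly in the off-diagonal model — a harmless, slightly more detailed rendering of the same proof.
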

\begin{proof} As we have already observed, $\Phi$ is a smooth map between Riemannian symmetric spaces, induced by a group homomorphism that preserves the inclusion of maximal compact subgroups (see Section \ref{sec:Lie_groups_inclusion}). In this case, a standard argument involving Lie triple systems (see \cite[Theorem 7.2, Chapter IV]{helgason1979differential}) allows us to conclude that the image of $\Phi$ is in fact a totally geodesic submanifold of $\mathbb Y_{n+1}$. We may therefore complete the proof by invoking Lemma \ref{lem:composition_harmonic_maps} and Corollary \ref{cor:gauss_lift_harmonic}.

\end{proof}

\appendix
\section{Contact geometry}\label{appendix_A}
\noindent In this first appendix, we begin by recalling the fundamental notions of contact geometry, following \cite[\S 3.5]{mcduff2017introduction}, and then state and prove a result that allows one to construct contact submanifolds within symplectic manifolds possessing specific symmetries. The proposition will be applied to the pseudosphere $\mathbb{S}^{n,n+1}$, regarded as a quadric in $\mathbb{R}^{n+1,n+1}$ naturally endowed with the standard symplectic form.  \begin{defi}\label{def:contact_structure}
A \emph{contact structure} on a smooth manifold $M$ of dimension $2n+1$ is the datum of a $1$-form $\eta$ on $M$ such that $\eta\wedge(\mathrm d\eta)^{n}\neq 0$. The $1$-form $\eta$ is called \emph{contact $1$-form}
\end{defi} \noindent By virtue of Frobenius' integrability theorem, having a contact $1$-form $\eta$ on the manifold $M$ is equivalent to saying that the distribution $\ker \eta \subset TM$ of rank $2n$ is maximally non-integrable. In particular, $\mathrm{d}\eta$ restricts to a non-degenerate 2-form on $\ker \eta$, implying that $M$ is orientable. For any given contact form $\eta$ there exists a unique vector field $\zeta\in\Gamma(TM)$ such that \begin{equation}
    \iota_\zeta(\mathrm d\eta)=0, \quad \eta(\zeta)=1
\end{equation}and it is called the \emph{Reeb vector field} determined by $\eta$. In other words, $\zeta$ is the normalized vector field pointing towards the unique null direction of $\mathrm{d}\eta$.

\begin{prop}\label{prop:contact_submanifolds}
Let $(N,\omega)$ be a symplectic manifold of dimension $2n$ and let $H:N\to \R$ be a smooth function. Let $c\in\R^*$ be a regular value for $H$ and $\Sigma_c:=H^{-1}(c)\subset N$ the associated embedded hypersurface. Suppose there exists a vector field $X\in\Gamma(TN)$ such that: \begin{itemize}
    \item[(i)] $\mathcal{L}_X\omega=\omega$ ($X$ is a Liouville vector field); \item[(ii)] $X$ is transverse to $\Sigma_c$, i.e. $\mathrm dH(X)|_{\Sigma_c}\neq 0$;  
\end{itemize}then $\eta:=j^*(\iota_X\omega)$ is a contact $1$-form on $\Sigma_c$, where $j:\Sigma_c\to N$ denotes the inclusion.
\end{prop}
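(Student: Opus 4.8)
The plan is to verify the contact condition $\eta \wedge (\mathrm{d}\eta)^n \neq 0$ on $\Sigma_c$ by pulling everything back to $N$ and exploiting the Liouville property. First I would compute $\mathrm{d}\eta$: since $\eta = j^*(\iota_X\omega)$ and $j$ is an embedding, $\mathrm{d}\eta = j^*\mathrm{d}(\iota_X\omega) = j^*(\mathcal{L}_X\omega - \iota_X\mathrm{d}\omega) = j^*(\mathcal{L}_X\omega)$ by Cartan's magic formula and $\mathrm{d}\omega = 0$. Hypothesis (i) then gives $\mathrm{d}\eta = j^*\omega$. So the contact form is $\eta = j^*(\iota_X\omega)$ with $\mathrm{d}\eta = j^*\omega$, and I must show $j^*\big((\iota_X\omega) \wedge \omega^n\big) \neq 0$ at every point of $\Sigma_c$.

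Next I would argue pointwise at $p \in \Sigma_c$. Since $c \neq 0$ is a regular value, $T_p\Sigma_c = \ker(\mathrm{d}H_p)$ is a hyperplane in $T_pN$, and by hypothesis (ii) $X_p \notin T_p\Sigma_c$, so $T_pN = T_p\Sigma_c \oplus \mathbb{R}X_p$. The key observation is that for any $(2n+1)$-form expression, pulling back by $j$ amounts to restricting to $T_p\Sigma_c$, so $\eta \wedge (\mathrm{d}\eta)^n$ at $p$ evaluated on a basis $e_1,\dots,e_{2n}$ of $T_p\Sigma_c$ equals $\big((\iota_X\omega) \wedge \omega^n\big)(e_1,\dots,e_{2n})$ computed in $N$. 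Now I would use the standard fact that the $(2n+1)$-form $(\iota_X\omega)\wedge \omega^n$ on $N$ is, up to a nonzero constant, $\iota_X(\omega^{n+1})$; but $\omega^{n+1} = 0$ since $\dim N = 2n$. That is the wrong track — instead, the point is that $\omega^n$ is a volume form on $N$ (as $\omega$ is symplectic), and $(\iota_X\omega^n) = n\,(\iota_X\omega)\wedge\omega^{n-1}$, which is not directly what appears. Let me reorganize: the cleanest route is to test $\eta \wedge (\mathrm{d}\eta)^n$ on the basis $e_1,\dots,e_{2n}$ of $T_p\Sigma_c$ together with no extra vector (it is a top form on $\Sigma_c$), and compare with the volume form $\omega^n(X, e_1, \dots, e_{2n-1}, \cdot)$-type contractions. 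Concretely, $\iota_X(\omega^n) = \sum (\text{terms})$, and evaluating $(\iota_X\omega^n)(e_1,\dots,e_{2n}) = \omega^n(X, e_1,\dots,e_{2n})$ is the determinant-type pairing; since $\{X_p, e_1,\dots,e_{2n}\}$ spans $T_pN$ (dimension $2n+1$... wait, $\dim T_pN = 2n$), so $\{e_1,\dots,e_{2n}\}$ already spans, and $X_p$ is a linear combination of them — this forces $\omega^n(X,e_1,\dots,e_{2n})$ to involve only $2n$ of the $2n+1$ slots. I see the dimension bookkeeping needs care: $\mathrm{d}\eta = j^*\omega$ is a $2$-form on the $(2n+1)$-dimensional wait — $\Sigma_c$ has dimension $2n - 1$. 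So $\eta$ is a $1$-form on a $(2n-1)$-manifold and the contact condition is $\eta \wedge (\mathrm{d}\eta)^{n-1} \neq 0$.

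With the correct dimension count, $\Sigma_c = H^{-1}(c)$ has dimension $2n-1$, and $\mathrm{d}\eta = j^*\omega$. I would then compute, for a basis $e_1, \dots, e_{2n-1}$ of $T_p\Sigma_c$ completed by $X_p$ to a basis of $T_pN$ (legitimate by transversality (ii)): the top form $\eta \wedge (\mathrm{d}\eta)^{n-1}$ evaluated on $(e_1,\dots,e_{2n-1})$ equals $\big((\iota_X\omega)\wedge\omega^{n-1}\big)(e_1,\dots,e_{2n-1})$, and since $(\iota_X\omega)(e_i) = \omega(X, e_i)$ while the remaining factors feed $\omega^{n-1}$ on the $e_i$'s, this is precisely, up to the combinatorial constant $\tfrac{1}{(n-1)!}$ or similar, the value $\omega^n(X, e_1, \dots, e_{2n-1})$ — the contraction $\iota_X(\omega^n)$ evaluated on the basis of $T_p\Sigma_c$. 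Because $\omega^n$ is a nonvanishing volume form on $N$ and $\{X_p, e_1,\dots,e_{2n-1}\}$ is a basis of $T_pN$, this value is nonzero. Hence $\eta\wedge(\mathrm{d}\eta)^{n-1} \neq 0$ at $p$, and since $p$ was arbitrary, $\eta$ is a contact form on $\Sigma_c$.

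The main obstacle is bookkeeping the combinatorial identity $(\iota_X\omega)\wedge\omega^{n-1} = \tfrac{1}{n}\iota_X(\omega^n)$ and matching it against the volume form nonvanishing; once that algebraic identity is in hand, the transversality hypothesis (ii) does the rest by guaranteeing $X_p$ supplies the missing direction so that $\iota_X(\omega^n)$ restricted to $T_p\Sigma_c$ is a nonzero top form. I would also remark that the hypothesis $c \neq 0$ is used only to ensure $\Sigma_c$ is a genuine regular level set in the intended normalization (and, implicitly, that the induced Reeb vector field behaves well), but the core contact computation needs only (i) and (ii).
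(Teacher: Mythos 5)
Your final argument is correct and follows essentially the same route as the paper: Cartan's formula with $\mathcal{L}_X\omega=\omega$ and $\mathrm{d}\omega=0$ gives $\mathrm{d}\eta=j^*\omega$, transversality supplies the basis $\{e_1,\dots,e_{2n-1},X_p\}$ of $T_pN$, and the identity $\iota_X(\omega^n)=n\,(\iota_X\omega)\wedge\omega^{n-1}$ reduces nonvanishing of $\eta\wedge(\mathrm{d}\eta)^{n-1}$ to that of the volume form $\omega^n$. The only caveat is the mid-proof dimension slip ($\dim\Sigma_c=2n-1$, not $2n$), which you correctly catch and repair, so the finished proof is sound.
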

\begin{proof}
Since $\Sigma_c$ is a manifold of dimension $2n-1$, the key point is to show that the condition $\eta \wedge (\mathrm{d}\eta)^{n-1} \neq 0$ holds, where $\eta = j^*(\iota_X \omega)$. Thanks to the assumptions on the vector field $X$ and Cartan's magic formula, on the manifold $N$ we have \begin{equation}\label{eq:Cartan_magic_contact}\omega = \mathcal{L}_X \omega = \mathrm{d}(\iota_X \omega) + \iota_X \mathrm{d}\omega = \mathrm{d}(\iota_X \omega). \end{equation} Pulling back via the inclusion $j$, we obtain $\mathrm{d}\eta = j^*\omega$. Moreover, since by assumption the vector field $X$ is transverse to the hypersurface $\Sigma_c$, at every point $p \in N$ we have a decomposition $T_p N = j_*\big(T_p \Sigma_c\big) \oplus \mathbb{R} X_p$. Given a local basis $\{e_1, \dots, e_{2n-1}\}$ of $T_p \Sigma_c$, we obtain a local basis $\{e_1, \dots, e_{2n-1}, X_p\}$ of $T_p N$, and thus $(\omega^n)(j_*e_1, \dots, j_*e_{2n-1}, X_p) \neq 0, \  \text{for any } p \in N$, since $\omega^n$ is a volume form. In particular \begin{align*}
    \eta\wedge(\mathrm d\eta)^{n-1}(e_1,\dots,e_{2n-1})&=j^*\big(\iota_X\omega\wedge(\mathrm d\iota_X\omega)^{n-1}\big)(e_1,\dots,e_{2n-1}) \\ & =\big(\iota_X\omega\wedge\omega^{n-1}\big)(j_*e_1,\dots,j_*e_{2n-1}) \tag{Equation (\ref{eq:Cartan_magic_contact})}
\end{align*} A straightforward application of induction shows that
$\iota_X(\omega^n) = n\, (\iota_X \omega) \wedge \omega^{n-1},$
and therefore, returning to the main computation, we conclude that
$$\eta\wedge(\mathrm d\eta)^{n-1}(e_1,\dots,e_{2n-1})=\frac{1}{n}(\omega^n)(j_*e_1,\dots,j_*e_{2n-1},X_p)\neq 0, \quad \text{for any} \ p\in N.$$
\end{proof}
\begin{remark}\label{rem:contact_geometry_hypersurfaces}
In the particular case of Proposition \ref{prop:contact_submanifolds}, the distribution $\ker \eta$ can be identified with the space
$\{Y \in \Gamma(T\Sigma_c) \mid \omega(X, j_*Y) = 0\} = j_*(X^{\perp_\omega}).$
In particular, we have a decomposition $T\Sigma_c = \ker \eta \oplus L$, where
$L = \{Y \in \Gamma(T\Sigma_c) \mid \omega(X, j_*Y) \neq 0\}$
is a 1-dimensional and hence integrable distribution.
If $\zeta \in \Gamma(T\Sigma_c)$ denotes the vector field generating it, then, up to scaling, $\eta(\zeta) = \omega(X, j_*\zeta) = 1$, and $\mathrm{d}\eta(\zeta, Y) = \omega(\zeta,j_*Y) = 0$ for every $Y \in \Gamma(T\Sigma_c)$. 
\end{remark}
\noindent At this point, we want to apply the previous result to the case $M=\mathbb{R}^{n+1,n+1}$ and $\Sigma_{1}=\mathbb{S}^{n,n+1}$, for a specific choice of the function $H$. Recall that in Section \ref{sec:isometric_model}, we have defined $b((x,y),(x,y))=x_1y_1+\dots+x_{n+1}y_{n+1}$ on $\mathbb{R}^{n+1,n+1}$, which induces a non-degenerate bilinear form of signature $(n+1,n+1)$. In particular, we have a well-defined smooth function $H(x,y) := b\big((x,y),(x,y)\big)$ from $\mathbb{R}^{n+1,n+1}$ to $\mathbb{R}$. Consider the symplectic form $\omega = -\sum_{i=1}^{n+1} \mathrm{d}x_i \wedge \mathrm{d}y_i$ and the vector field \begin{equation}\label{eq:Liouville_vector_field_pseudosphere}
X := \frac{1}{2}\sum_{i=1}^{n+1} \Big(x_i \frac{\partial}{\partial x_i} + y_i \frac{\partial}{\partial y_i}\Big) 
\end{equation} both defined on $\R^{n+1,n+1}$. It is readily seen that $\mathcal{L}_X \omega = \omega$ and $\mathrm{d}H(X) = -1$, i.e., $X$ is transverse to the hypersurface $
\Sigma_{1} := \{(x,y)\in \mathbb{R}^{n+1,n+1} \mid H(x,y)=1\}\cong\mathbb{S}^{n,n+1}.$ Therefore, applying Proposition \ref{prop:contact_submanifolds}, we have proved the following result \begin{prop}\label{prop:contact_pseudo_hyperbolic}
The $1$-form $\iota_X\omega=\frac{1}{2}\sum_{i=1}^{n+1}(y_i\mathrm dx_i-x_i\mathrm dy_i)$ induces, by restriction, a contact form $\eta$ on $\mathbb{S}^{n,n+1}$ and a decomposition $T\mathbb{S}^{n,n+1}=\Ker \ \eta \ \oplus L$, where $L$ is generated by the Reeb vector field $$\zeta=\sum_{i=1}^{n+1}\Big(x_i \frac{\partial}{\partial x_i} - y_i \frac{\partial}{\partial y_i}\Big).$$
\end{prop}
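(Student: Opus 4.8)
The plan is to obtain the statement as a direct application of Proposition~\ref{prop:contact_submanifolds} to the symplectic manifold $(\mathbb{R}^{n+1,n+1},\omega)$ with $\omega=-\sum_{i=1}^{n+1}\mathrm{d}x_i\wedge\mathrm{d}y_i$, Hamiltonian $H(x,y)=\sum_{i=1}^{n+1}x_iy_i=b\big((x,y),(x,y)\big)$, and Liouville field $X=\tfrac12\sum_{i=1}^{n+1}(x_i\partial_{x_i}+y_i\partial_{y_i})$, and then to pin down the Reeb field via the characterization recorded in Remark~\ref{rem:contact_geometry_hypersurfaces}. First I would verify the two hypotheses of Proposition~\ref{prop:contact_submanifolds}: contracting $X$ into $\omega$ gives $\iota_X\omega=\tfrac12\sum_i(y_i\mathrm{d}x_i-x_i\mathrm{d}y_i)$, hence $\mathrm{d}(\iota_X\omega)=-\sum_i\mathrm{d}x_i\wedge\mathrm{d}y_i=\omega$, and since $\mathrm{d}\omega=0$, Cartan's formula gives $\mathcal{L}_X\omega=\mathrm{d}(\iota_X\omega)=\omega$, so $X$ is Liouville; moreover $\mathrm{d}H(X)=\sum_i x_iy_i=H$, which equals $1$ along $\Sigma_1:=H^{-1}(1)\cong\mathbb{S}^{n,n+1}$, so $1$ is a regular value and $X$ is transverse to the quadric. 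Proposition~\ref{prop:contact_submanifolds} then yields that $\eta:=j^*(\iota_X\omega)$ is a contact form on $\mathbb{S}^{n,n+1}$ with the asserted explicit expression, together with the splitting $T\mathbb{S}^{n,n+1}=\Ker\,\eta\oplus L$, where $L$ is generated by the Reeb vector field; this same computation also records $\mathrm{d}\eta=j^*\omega$.

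Next I would check that $\zeta:=\sum_{i=1}^{n+1}(x_i\partial_{x_i}-y_i\partial_{y_i})$, restricted along $\mathbb{S}^{n,n+1}$, is exactly this Reeb field, using the three identities that characterize it (cf. Remark~\ref{rem:contact_geometry_hypersurfaces}). It is tangent to the quadric because $\mathrm{d}H(\zeta)=\sum_i(x_iy_i-x_iy_i)=0$, so $\zeta$ is everywhere tangent to the level sets of $H$ and in particular defines a section of $T\mathbb{S}^{n,n+1}$. Using $\mathrm{d}x_i(X)=\tfrac12x_i$, $\mathrm{d}y_i(X)=\tfrac12y_i$, $\mathrm{d}x_i(\zeta)=x_i$, $\mathrm{d}y_i(\zeta)=-y_i$, one computes $\eta(\zeta)=\omega(X,\zeta)=\sum_i x_iy_i=H=1$ on $\mathbb{S}^{n,n+1}$. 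Finally, since $\mathrm{d}\eta=j^*\omega$, for any tangent field $Y=\sum_i(a_i\partial_{x_i}+b_i\partial_{y_i})$ one has $\mathrm{d}\eta(\zeta,Y)=\omega(\zeta,Y)=-\sum_i(x_ib_i+y_ia_i)=-\mathrm{d}H(Y)=0$, so $\iota_\zeta\,\mathrm{d}\eta$ vanishes on $T\mathbb{S}^{n,n+1}$. These three facts identify $\zeta|_{\mathbb{S}^{n,n+1}}$ with the Reeb vector field of $\eta$, whence $L=\mathbb{R}\zeta$ and the decomposition follows.

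I do not expect a genuine obstacle here: the entire argument is a short sequence of contractions of the standard symplectic form on $\mathbb{R}^{2n+2}$ against the Euler field $X$ and the ``hyperbolic rotation'' field $\zeta$. The only delicate point is consistent bookkeeping of signs and normalizations — the minus sign in the chosen $\omega$, the factor $\tfrac12$ in $X$, and its absence in $\zeta$ — which must be tracked so that the stated constants come out exactly (in particular $\eta(\zeta)=1$ rather than $2$, and the stated sign in $\iota_X\omega$). One should also be mildly careful to confirm tangency of $\zeta$ to the quadric before calling it a section of $T\mathbb{S}^{n,n+1}$, which is precisely the content of the identity $\mathrm{d}H(\zeta)=0$.
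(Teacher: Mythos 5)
Your proposal is correct and follows essentially the same route as the paper: apply Proposition~\ref{prop:contact_submanifolds} to $\omega=-\sum_i\mathrm{d}x_i\wedge\mathrm{d}y_i$, $H=b$, and the Liouville field $X$, then identify $\zeta$ as the Reeb field via the characterization in Remark~\ref{rem:contact_geometry_hypersurfaces}; you are in fact more explicit than the paper, which only asserts the Reeb formula after invoking the proposition. Your computation $\mathrm{d}H(X)=H=1$ along $\Sigma_1$ is the right transversality check (the paper's ``$\mathrm{d}H(X)=-1$'' is a harmless sign slip, since only non-vanishing matters).
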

\begin{remark}
In a similar setting, but expressed in a different language, the contact form on hypersurfaces in $\mathbb{R}^{n+1,n+1}$ given by level sets of non-degenerate quadratic forms has also been studied in \cite[\S 9.1]{delarue2025locally}, where it was used to investigate dynamical properties of the space. See also \cite[\S A.1.2]{dorfmeister2024half} \end{remark}

\section{Para-Sasaki metrics on principal bundles}\label{appendix_B}
\noindent We now introduce the notion of a \emph{para-Sasaki} metric on a smooth manifold of odd dimension. Several references in the literature address this topic, but we will primarily follow the exposition in \cite{zamkovoy2009canonical,loiudice2025sasakian}. The goal is to show that such special metrics can be induced on principal $G$-bundles over para-Kähler manifolds, where $G$ is a one-dimensional Lie group. An analogous result has been established for Sasaki metrics on principal bundles over Kähler manifolds (\cite{hatakeyama1963some}), and seemed not to be known in the para-complex setting. We will apply this theory to the principal $\mathbb{R}$-bundle given by the pseudosphere $\mathbb{S}^{n,n+1}$ over the para-complex hyperbolic space $\mathbb{H}^n_\tau$.
\begin{defi}\label{def:para_complex_structure}A \emph{para-complex} structure on a smooth manifold \( N \) of real dimension \( 2n \) is an endomorphism \( \p \) of the tangent bundle such that:  
\begin{enumerate}  
\item[(i)] \( \p^2 = \mathrm{Id} \)  
\item[(ii)] The two eigendistributions \( D_{\pm} := \Ker\big(\p \mp \mathrm{Id}\big) \) both have rank \( n \) and are involutive.  
\end{enumerate}  \end{defi}
\noindent The pair $(N,\p)$ will be called a \emph{para-complex manifold} of para-complex dimension $n$. It is worth mentioning that the integrability conditions on the distributions $D_\pm$ can be rephrased in terms of the vanishing of the Nijenjuis tensor $$N_\p(X,Y)=[X, Y] + [\p(X), \p(Y)] - \p\big([\p(X), Y]\big) - \p\big([X, \p(Y)]\big)$$ for all $X,Y\in TN$ (see \cite[Proposition 1]{cortes2004special}). Moreover, if there exists a non-degenerate symmetric bilinear two-tensor \( \g \) on \( (N,\p) \) such that  
\[
\g(\p X, \p Y) = -\g(X,Y), \quad \text{for any} \ X,Y\in\Gamma(TM)
\]  
then \( \g \) defines a pseudo-Riemannian metric on \( N \) of signature \( (n, n) \), and the triple \( (N, \p, \g) \) is called a \emph{para-Hermitian manifold}. It is easily verified that the pairing  
\[
\ome := \g(\cdot, \p \cdot)
\]  
defines a 2-form on \( N \), known as the fundamental 2-form of \( (N,\p, \g) \). If, in addition, \( \ome \) is a symplectic 2-form, i.e., \( \mathrm{d}\ome = 0 \), then the manifold \( (N,\p, \g, \ome) \) is called \emph{para-Kähler}.
\\ \\ Recall that a contact manifold is a triple $(P,\eta,\zeta)$ where $P$ is an odd dimensional smooth manifold, $\eta$ is a $1$-form on $P$ such that $\eta\wedge(\mathrm d\eta)^{\frac{\dim P-1}{2}}\neq 0$ and $\zeta$ is the Reeb vector field (Definition \ref{def:contact_structure}). \begin{defi}\label{def:para_metric_contact}
    Let $(P,\eta,\zeta)$ be a contact manifold of dimension $2n+1$, then it is called a \emph{para-contact metric} manifold if there exists an endomorphism $\phi$ of $TP$ and a pseudo-Riemannian metric $\hat g$ such that: \begin{enumerate}
    \vspace{0.2em}\item[(i)] $\phi(\zeta)=0$ and $\eta\circ\phi=0$; 
\vspace{0.2em}\item[(ii)] $\phi$ induces a para-complex structure when restricted to the horizontal distribution of the contact structure; 
    \vspace{0.2em}\item[(iii)] $\phi^2(X)=X-\eta(X)\zeta$; 
    \vspace{0.2em}\item[(iv)] $g(\phi(X),\phi(Y))=-g(X,Y)+\eta(X)\eta(Y)$;
   \vspace{0.2em} \item[(v)] $\mathrm d\eta(X,Y)=g(X,\phi(Y))$ 
\end{enumerate} for each $X,Y\in\Gamma(TP)$.
\end{defi}
\noindent The properties of a para-contact metric manifold induce a splitting of its tangent bundle as
$$
TP = D_+ \oplus D_- \oplus \mathbb{R} \zeta,
$$
where $D_\pm$ are the eigendistributions of $\phi$ corresponding to the eigenvalues $\pm1$. In particular, the signature of the metric $g$ is $(n+1, n)$, since
$$
g(\zeta, \zeta) = 1 +g(\phi(\zeta), \phi(\zeta)),
$$
but $\phi(\zeta) = 0$. As in the case of complex and para-complex structures, the endomorphism $\phi$ also admits a notion of integrability, encoded in a tensor that generalizes the classical Nijenhuis tensor.
\begin{defi}\label{def:para_Sasaki_metric}
A para-contact metric manifold $(P, \eta, \zeta, \phi,g)$ is called a \emph{para-Sasaki metric manifold} if
$$
N_\phi(X, Y) - \mathrm{d}\eta(X, Y)\, \zeta = 0, \quad \text{for each } X, Y \in \Gamma(TP),
$$
where
$
N_\phi(X, Y) :=[X, Y] + [\phi(X), \phi(Y)] - \phi\big([\phi(X), Y]\big) - \phi\big([X, \phi(Y)]\big)-\eta\big([X,Y]\big)\zeta.
$
\end{defi}
%\begin{lemma}[\cite{zamkovoy2009canonical}]
%Let $(P,\eta,\zeta,\phi,g)$ be a para-Sasaki metric manifold, then\begin{enumerate}\item[(i)]  $\zeta \ \text{is} \ g-\text{Killing}$; \item[(ii)]  $\phi(X)=-\nabla^{g}_X\zeta,$\end{enumerate}where $\nabla^{g}$ denotes the Levi-Civita connection of $g$.
%\end{lemma}
\noindent %It is worth noting that, when $(\eta, \zeta, \phi, \hat{g})$ defines a para-Sasaki metric on $P$, then $\nabla_X \zeta = -\phi(X)$ (see \cite[Lemma 2.5]{zamkovoy2009canonical}),and $\zeta$ is a Killing vector field, since for every $X, Y \in \Gamma(TP)$ we have\begin{align*}\hat g(\nabla_X\zeta,Y)+\hat g(X,\nabla_Y\zeta)&=-\hat g(\phi(X),Y)-\hat g(X,\phi(Y)) \\ & =-\mathrm d\eta(Y,X)-\mathrm d\eta(X,Y)=0,\end{align*}where we used point (v) in Definition \ref{def:para_metric_contact}.
We briefly recall the definition of connection on a principal $G$-bundle and then we state and prove the main result concerning the existence of para-Sasaki metrics. \begin{defi}
A \emph{connection} on a principal $G$-bundle $P$ is a $\mathfrak g$-valued $1$-form $\omega$ on $P$ such that:\begin{enumerate}
    \item[(i)]$\mathrm{Ad}_g(R_g^*\omega)=\omega$;
    \item[(ii)] for any element $v\in\mathfrak g$ we have $\omega(X_v)=v$, where $X_v$ is the vector field obtained by differentiating the $G$-action on $P$.
\end{enumerate}
\end{defi}
\begin{remark}
In the case $G=\R$, the element $\omega$ is simply a real-valued $1$-form, and the first property implies that it is invariant under the $\R$-action
\end{remark}
\begin{prop}\label{prop:para_Sasaki_principal_bundle}
Let $N$ be a smooth manifold endowed with a para-K\"ahler metric $(\g,\p,\ome)$ of signature $(n,n)$ and with integral cohomology class $[\ome]$. Let $\pi:P\to N$ be the associated principal $G$-bundle, with Euler class $[\ome]$ and where $G$ is a Lie group of dimension $1$. Then, there exists a para-Sasaki metric $(\eta,\zeta,\phi,g)$ on $P$ such that $\mathrm d\eta=\pi^*\ome$.
\end{prop}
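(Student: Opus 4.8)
\textbf{Proof strategy for Proposition \ref{prop:para_Sasaki_principal_bundle}.}
The plan is to build the four objects $(\eta,\zeta,\phi,g)$ on $P$ explicitly from the connection form and the para-K\"ahler data on the base, and then verify the axioms of Definition \ref{def:para_metric_contact} and Definition \ref{def:para_Sasaki_metric} one by one. First I would fix a principal connection $1$-form $\omega$ on $P$ whose curvature represents the Euler class; since $\dim G=1$ and $[\ome]$ is the Euler class, Chern--Weil theory lets us choose $\omega$ (a genuine real-valued $1$-form, as noted in the Remark after the definition of connection) with $\mathrm d\omega=\pi^*\ome$. Set $\eta:=\omega$. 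The defining condition $\eta\wedge(\mathrm d\eta)^n=\omega\wedge\pi^*(\ome^n)\neq 0$ is then immediate because $\ome^n$ is a volume form on the $2n$-manifold $N$ and $\omega$ is nonzero on the vertical direction; this also gives $\mathrm d\eta=\pi^*\ome$ as required. Let $\zeta$ be the fundamental vector field of the $G$-action normalized so that $\eta(\zeta)=1$; property (ii) of the connection together with $\mathrm d\omega=\pi^*\ome$ forces $\iota_\zeta\mathrm d\eta=0$, so $\zeta$ is the Reeb field and $T P=\Ker\,\eta\oplus\R\zeta$.

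Next I would transport the para-complex structure and the metric. The connection gives a horizontal distribution $\mathcal H:=\Ker\,\eta$, and $\mathrm d\pi$ restricts to an isomorphism $\mathcal H_p\to T_{\pi(p)}N$ at each point. Define $\phi$ on $\mathcal H$ to be the pullback of $\p$ under this isomorphism, and $\phi(\zeta):=0$; define $g$ on $\mathcal H$ to be the pullback of $\g$, declare $\zeta$ to be $g$-orthogonal to $\mathcal H$, and set $g(\zeta,\zeta):=1$. Properties (i), (ii), (iii) of Definition \ref{def:para_metric_contact} hold by construction: $\phi^2=\Id$ on $\mathcal H$ (since $\p^2=\Id$) and $\phi^2(X)=X-\eta(X)\zeta$ on all of $TP$ after splitting $X$ into horizontal and vertical parts; $\eta\circ\phi=0$ and $\phi(\zeta)=0$ are built in; and the restriction of $\phi$ to $\mathcal H$ is para-complex because $\p$ is. Property (iv), $g(\phi X,\phi Y)=-g(X,Y)+\eta(X)\eta(Y)$, follows on $\mathcal H$ from $\g(\p\cdot,\p\cdot)=-\g$ and on the vertical part from $g(\zeta,\zeta)=1$, $\phi(\zeta)=0$. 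Property (v), $\mathrm d\eta(X,Y)=g(X,\phi Y)$, is the compatibility $\ome=\g(\cdot,\p\cdot)$ pulled back to $\mathcal H$, and both sides vanish when either argument is $\zeta$ (using $\iota_\zeta\mathrm d\eta=0$ and $g(\zeta,\mathcal H)=0$, $\phi\zeta=0$); so this reduces to the horizontal case.

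The main obstacle is the integrability identity of Definition \ref{def:para_Sasaki_metric}, namely $N_\phi(X,Y)-\mathrm d\eta(X,Y)\,\zeta=0$. I would check it on the three types of pairs obtained from the splitting $TP=\mathcal H\oplus\R\zeta$. For $X=Y=\zeta$ everything vanishes trivially. For $X\in\mathcal H$, $Y=\zeta$: using that $\zeta$ is the fundamental field of a connection, $[\zeta,X]$ is again horizontal for $X$ the horizontal lift of a base field (this is where invariance of $\omega$, hence of $\mathcal H$, under the flow of $\zeta$ enters), and one checks the bracket terms cancel while $\mathrm d\eta(X,\zeta)=0$. The substantive case is $X,Y\in\mathcal H$, which I would handle by writing $X,Y$ as horizontal lifts $\tilde X,\tilde Y$ of vector fields on $N$. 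Then $[\tilde X,\tilde Y]=\widetilde{[X,Y]}-\ome(X,Y)\,\zeta$ by the structure equation $\mathrm d\eta=\pi^*\ome$ (the curvature term), and similarly for $[\phi\tilde X,\phi\tilde Y]=[\widetilde{\p X},\widetilde{\p Y}]=\widetilde{[\p X,\p Y]}-\ome(\p X,\p Y)\,\zeta$; the two mixed brackets contribute horizontal parts $\widetilde{[\p X,Y]}$, $\widetilde{[X,\p Y]}$ and curvature terms $-\ome(\p X,Y)\zeta$, $-\ome(X,\p Y)\zeta$. Collecting the horizontal parts, one gets exactly $\widetilde{N_\p(X,Y)}=0$ since $N$ is para-K\"ahler, hence para-complex. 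Collecting the vertical parts, one gets $\big(-\ome(X,Y)-\ome(\p X,\p Y)+\ome(\p X,Y)+\ome(X,\p Y)\big)\zeta$ minus the term $\eta([\tilde X,\tilde Y])\zeta=-\ome(X,Y)\zeta$ subtracted in the definition of $N_\phi$, and this must equal $\mathrm d\eta(\tilde X,\tilde Y)\,\zeta=\ome(X,Y)\,\zeta$; the identity then reduces to the purely algebraic relation $\ome(\p X,\p Y)=-\ome(X,Y)$ and $\ome(\p X,Y)=\ome(X,\p Y)$, both of which follow immediately from $\ome=\g(\cdot,\p\cdot)$, $\p^2=\Id$, and $\g(\p\cdot,\p\cdot)=-\g$. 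This bookkeeping with the curvature terms is the only delicate point; once it is organized as above, the proposition follows, and it specializes to $\mathbb S^{n,n+1}\to\h_\tau^n$ as stated.
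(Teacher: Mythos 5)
Your construction is essentially the paper's: take $\eta$ to be the connection $1$-form with curvature $\pi^*\ome$, let $\zeta$ generate the vertical distribution, define $\phi$ as the horizontal lift of $\p$ and $g=\pi^*\g+\eta\otimes\eta$, and verify Definitions \ref{def:para_metric_contact} and \ref{def:para_Sasaki_metric}; the only organizational difference is that the paper checks integrability by showing separately that $\pi_*\big(N_\phi(Y,Z)\big)=0$ (reducing to $N_\p=0$ on the base) and $\eta\big(N_\phi(X,Y)-\mathrm d\eta(X,Y)\zeta\big)=0$, while you check it case by case on the splitting $\mathcal H\oplus\R\zeta$ with explicit curvature bookkeeping. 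One small slip in your vertical-part accounting: the mixed curvature terms $-\ome(\p X,Y)\zeta$ and $-\ome(X,\p Y)\zeta$ occur inside $\phi(\cdot)$ in the definition of $N_\phi$ and are therefore annihilated by $\phi(\zeta)=0$, so they should not appear at all; moreover the auxiliary identity you invoke, $\ome(\p X,Y)=\ome(X,\p Y)$, has the wrong sign, since $\ome(\p X,Y)=-\g(X,Y)=-\ome(X,\p Y)$ --- with the correct sign those two spurious terms would cancel each other in any case, so the verification closes and your conclusion is unaffected.
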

\begin{proof}
Let $\eta$ be the 1-form defining the connection on the principal bundle $\pi : P \to N$ whose curvature form $\mathrm d\eta$ is given by $\pi^*\ome$. This is equivalent to have a $G$-equivariant horizontal and vertical distribution denoted by $\mathcal{H}$ and $\mathcal{V} = \ker(\pi_*)$, respectively, such that $TP = \mathcal{H} \oplus \mathcal{V}$. Moreover, since $\mathcal{V}$ is one-dimensional, it is integrable and generated by a vector field $\zeta$. Given any point $x \in P$ and any vector $Y \in T_xP$, there exists a unique decomposition $Y = Y^\mathcal{H} + Y^\mathcal{V}$ into horizontal and vertical components, with $Y^\mathcal{H} \in \mathcal{H}_x$ and $Y^\mathcal{V} \in \mathcal{V}_x$. Now let us define an endomorphism $\phi_x$ of $T_xP$ by
$$
\phi_x(Y) :=\overline{\p\big(\pi_*(Y)\big)}, \quad \text{for } Y \in T_xP,
$$ where the bar above the vector denotes its horizontal lift. Notice that since $\zeta$ is the vector field generating the vertical distribution, we have $\phi_x(\zeta_x) = 0$ for all $x\in P$.  Moreover,
\begin{align*}
\phi_x^2(Y) &= \phi_x\Big(\overline{\p\big(\pi_*(Y)\big)}\Big) \\
&= \overline{\p\Big(\pi_*\big(\overline{\p\big(\pi_*(Y)\big)}\big)\Big)} \\
&= \overline{\p\Big(\p\big(\pi_*(Y)\big)\Big)} \tag{$\pi_*(v^\mathcal H)=v,$ for any $v\in T_xP$} \\
&= \overline{\pi_*(Y)} \tag{$\p^2=\Id$} \\ &=Y^{\mathcal H}.
\end{align*}
Thanks to the decomposition of $TP$ induced by $\eta$ and $\zeta$, we can express the horizontal component of a vector $Y$ in terms of these data. Indeed,
$$
Y = Y^\mathcal{H} + Y^\mathcal{V} = \overline{\pi_*(Y)} + Y^\mathcal{V} = \overline{\pi_*(Y)} + \lambda(x) \zeta_x,
$$
where $\lambda$ is a smooth real-valued function on $P$. Since $\eta|_{\mathcal{H}} \equiv 0$ and $\eta_x(\zeta_x) = 1$, we compute
$$
\eta_x(Y) = \eta_x\big(\overline{\pi_*(Y)}\big) + \lambda(x)\eta_x(\zeta_x) = \lambda(x),
$$
From this, we obtain the identity $Y^\mathcal{H} = Y - \eta_x(Y)\zeta_x$, which, combined with the computation above, implies that
$
\phi_x^2(Y) = Y - \eta_x(Y)\zeta_x.
$ It is now clear that, for each $x \in P$, the endomorphism $\phi_x$ restricts to a para-complex structure on $\mathcal{H}_x$, which can be identified with $\p_{\pi(x)}$ via the isomorphism ${\pi_*}_{|_{\mathcal{H}_x}} : \mathcal{H}_x \to T_{\pi(x)}N$. \noindent To complete the proof, it remains to verify properties (iv) and (v) of Definition \ref{def:para_metric_contact} and the integrability condition of Definition \ref{def:para_Sasaki_metric}. As a first step, we introduce a symmetric bilinear form on $T_xP$ given by the formula:
$$
{g}_x(Y, Z) := \g_{\pi(x)}\big(\pi_*(Y), \pi_*(Z)\big) + \eta_x(Y)\eta_x(Z), \quad \text{for } Y, Z \in T_xP.
$$ The bilinear form $g_x$ is non-degenerate and has signature $(n+1, n)$, since it restricts to the pullback of $\g$ via $\pi$ on the horizontal distribution, and
$
g(\zeta_x, \zeta_x) = \eta_x(\zeta_x)\eta_x(\zeta_x) = 1.
$
We thus obtain a well-defined pseudo-Riemannian metric of signature $(n+1, n)$ on $P$. We can therefore compute for all $x\in P$
\begin{align*}
g_x(\phi_x(Y), \phi_x(Z))&=\g_{\pi(x)}\big(\pi_*\big(\phi_x(Y)\big),\pi_*\big(\phi_x(Z)\big)\big)+\eta_x\big(\phi_x(Y)\big)\eta_x\big(\phi_x(Z)\big) \\ &=\g_{\pi(x)}\Big(\pi_*\big(\overline{\p(\pi_*(Y))}\big),\pi_*\big(\overline{\p(\pi_*(Y))}\big)\Big) \tag{$\eta\circ\phi=0$} \\ &=\g_{\pi(x)}\big(\p(\pi_*(Y)),\p(\pi_*(Z))\big) \\ &=-\g_{\pi(x)}(\pi_*(Y),\pi_*(Z)) \tag{$\g(\p\cdot,\p\cdot)=-\g(\cdot,\cdot)$} \\ &=-g_x(Y,Z)+\eta_x(Y)\eta_x(Z), 
\end{align*}for all $Y,Z\in T_xP$. With a similar approach, we compute: \begin{align*}
g_x(Y,\phi_x(Z))&=\g_{\pi(x)}\Big(\pi_*(Y),\pi_*\big(\overline{\p\big(\pi_*(Z)\big)}\big)\Big)+\eta_x(Y)\eta_x(\phi_x(Z)) \\ &=\g_{\pi(x)}\big(\pi_*(Y),\p(\pi_*(Z))\big) \\ &=\ome_{\pi(x)}(\pi_*(Y),\pi_*(Z)) \tag{$g(\cdot,\p\cdot)=\omega$} \\ &=(\pi^*\ome)_x(Y,Z) \\ &=(\mathrm d\eta)_x(Y,Z), \tag{$\omega$ is the curvature form of $\eta$}
\end{align*} for all $Y,Z\in T_xP$. We conclude with the proof of the integrability criterion in Definition \ref{def:para_Sasaki_metric}. In particular, we need to show that
$
N_\phi(X, Y) -\mathrm{d}\eta(X, Y)\, \zeta = 0 \quad \text{for all } X, Y \in \Gamma(TP),
$
where $N_\phi$ denotes the Nijenhuis tensor of $\phi$, as defined earlier. Notice that, this is equivalent to show the following two conditions: $$\pi_*(N_\phi(Y,Z))=0 \quad \text{and} \quad \eta\big(N_\phi(X, Y) -\mathrm{d}\eta(X, Y)\, \zeta\big)=0$$ for all $X,Y\in TP$. Let $x \in P$ be a point, and let $Y, Z \in T_xP$. Denote by $Y^G$ and $Z^G$ their $G$-invariant horizontal extensions in a neighborhood of $x$, chosen so that their values at $x$ coincide with $Y$ and $Z$, respectively.
For the first part of the computation, using the identities
$
\pi_*[Y^G, Z^G] = [\pi_*(Y^G), \pi_*(Z^G)] \ \text{and} \ \pi_*(\phi(Y^G)) = \p(\pi_*(Y^G)),
$
we deduce that the term $\pi_*(N_\phi(Y^G, Z^G))$ coincides exactly with the Nijenhuis tensor of the para-complex structure $\p$ evaluated on the vectors $\pi_*(Y^G)$ and $\pi_*(Z^G)$, which vanishes since $\p$ is integrable. For the second part, we can proceed with a direct computation for $X, Y \in \Gamma(TP)$. Indeed, we have: \begin{align*}
\eta\big(N_\phi(Y,Z)\big)&=\eta\big([Y,Z]\big)+\eta\big([\phi(Y),\phi(Z)]\big)-\eta\big([Y,Z]\big)\eta(\zeta) \tag{$\eta\circ\phi=0$} \\ &=\eta\big([\phi(Y),\phi(Z)]\big) \tag{$\eta(\zeta)=1$} \\ &=\phi(Y)\cdot\eta\big(\phi(Z)\big)-\phi(Z)\cdot\eta\big(\phi(Y)\big)-\mathrm d\eta(\phi(Y),\phi(Z)) \\ &=-\mathrm d\eta(\phi(Y),\phi(Z)) \\ &=- g\big(\phi(Y),\phi^2(Z)\big) \\ &=-\mathrm d\eta(Z,Y) \tag{$\phi(Y)\perp_{ g}\zeta$} \\ &=\mathrm d\eta(Y,Z)
\end{align*} and the claim follows.
\end{proof}

\begin{cor}\label{cor:para_sasaki_pseudo_sphere}
The pseudosphere $\mathbb{S}^{n,n+1}$ can be endowed with a para-Sasaki metric $(\eta,\zeta,\phi,g)$, where \begin{itemize}
    \item[(i)] $\eta$ and $\zeta$ are respectively the contact form and Reeb vector field described in Proposition \ref{prop:contact_pseudo_hyperbolic}; \item[(ii)] $\phi(Y)=\mathrm{pr}_{\mathcal H}\big(\hat\p(Y)\big)=\hat\p(Y)-\eta(Y)\zeta-\eta\big(\hat\p(Y)\big)\hat\p(\zeta)$, where $Y$ is a vector field tangent to $\mathbb S^{n,n+1}$ but considered with values in $\R^{n+1,n+1}$ and $\hat\p=(\Id,-\Id)$;
    \item[(iii)] $g$ is the pseudo-Riemannian metric of signature $(n,n+1)$ induced on $\mathbb S^{n,n+1}$ by restriction of the bilinear form on $\R^{n+1,n+1}.$  
\end{itemize}
\end{cor}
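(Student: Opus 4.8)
The plan is to realize $\mathbb{S}^{n,n+1}$, together with the prescribed data, as an instance of the general construction of Proposition \ref{prop:para_Sasaki_principal_bundle}. By Section \ref{sec:paracomplex_geometry} the projection $\pi_+:\mathbb{S}^{n,n+1}\to\mathbb{S}^{n,n+1}/\mathbb{R}\cong\h_\tau^n$ is a principal $\mathbb{R}$-bundle, where $\mathbb{R}\cong\SO_0(1,1)$ acts through the matrices $\Lambda_t$ of (\ref{eq:matrix_invertible_elements}), and the base $\h_\tau^n$ carries the para-K\"ahler structure $(\g,\p,\ome)$. First I would observe that the infinitesimal generator of this $\mathbb{R}$-action at a point $(x,y)$ is $\tfrac{\mathrm d}{\mathrm dt}\big|_{0}\Lambda_t(x,y)=(x,-y)$, which is precisely the Reeb vector field $\zeta=\sum_i\big(x_i\partial_{x_i}-y_i\partial_{y_i}\big)$ of Proposition \ref{prop:contact_pseudo_hyperbolic}; and the contact $1$-form $\eta=\iota_X\omega$ of that proposition is invariant under the $\mathbb{R}$-action, since the latter preserves both the bilinear form $b$ and the ambient symplectic form $\omega$. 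Hence $\eta$ is a connection $1$-form for $\pi_+$, with horizontal distribution $\mathcal H=\ker\eta$ and vertical distribution $\mathbb{R}\zeta$.

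Next I would check that the curvature of this connection is the pullback of the para-K\"ahler form. By (\ref{eq:Cartan_magic_contact}) in the proof of Proposition \ref{prop:contact_submanifolds} one has $\mathrm d\eta=j^{*}\omega$, where $j:\mathbb{S}^{n,n+1}\hookrightarrow\R^{n+1,n+1}$; this $2$-form annihilates $\zeta$ (it is $\iota_\zeta\mathrm d\eta=0$ by the Reeb property) and is $\mathbb{R}$-invariant, hence basic for $\pi_+$, and by the very definition of the para-K\"ahler structure on $\h_\tau^n$ as the descent of the ambient one it equals $\pi_+^{*}\ome$. Proposition \ref{prop:para_Sasaki_principal_bundle} (applied with $G=\mathbb{R}$) then produces a para-Sasaki metric $(\eta,\zeta,\phi,g)$ on $\mathbb{S}^{n,n+1}$ with $\mathrm d\eta=\pi_+^{*}\ome$, and item (i) of the statement holds by construction.

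It then remains to match the abstractly-defined $\phi$ and $g$ with the explicit ambient formulas in (ii) and (iii). In the proof of Proposition \ref{prop:para_Sasaki_principal_bundle} one has $\phi(Y)=\overline{\p\big(\pi_{+*}Y\big)}$, the horizontal lift of $\p$ applied to $\pi_{+*}Y$. Writing $\rho$ for the position vector (so $X=\tfrac12\rho$ is the Liouville field and $\R^{n+1,n+1}=T\mathbb{S}^{n,n+1}\oplus\R\rho=\mathcal H\oplus\R\zeta\oplus\R\rho$ is a $b$-orthogonal splitting), one uses $\hat\p(\zeta)=\rho$, $\hat\p(\rho)=\zeta$, and the fact that $\p$ on the base is induced by $\hat\p=(\Id,-\Id)$; then a direct computation of the $b$-orthogonal projection onto $\mathcal H$ — relying on the identities $b(w,\zeta)=-\eta(w)$ and $b(w,\rho)=\eta(\hat\p w)$ — identifies $\phi(Y)=\mathrm{pr}_{\mathcal H}(\hat\p Y)$ with the expression in (ii). Likewise, in that proof the metric is $g=\pi_+^{*}\g\pm\eta\otimes\eta$; since $\g$ restricted to $\mathcal H$ is the restriction of $b$ and the $\zeta$-line carries $b(\zeta,\zeta)=-1$, the metric $g$ is exactly the restriction of the ambient bilinear form to $T\mathbb{S}^{n,n+1}$, of signature $(n,n+1)$, which is (iii).

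The step I expect to be the main obstacle is the causal bookkeeping. Proposition \ref{prop:para_Sasaki_principal_bundle} is phrased with a \emph{spacelike} fiber direction (so $g(\zeta,\zeta)=1$ and the total signature is $(n+1,n)$, which fits $\mathbb{H}^{n+1,n}$ directly), whereas here $b(\zeta,\zeta)=-1$, so the fiber of $\pi_+$ is \emph{timelike}; one must verify that the argument of that proposition goes through verbatim with the $\eta\otimes\eta$-term entering with the opposite sign, so that the signature of $g$ comes out as $(n,n+1)$ and the para-Hermitian identities of Definition \ref{def:para_metric_contact} hold in their mirrored form. Equivalently, one may deduce the statement from the corresponding one for $\mathbb{H}^{n+1,n}$ via the anti-isometry of Remark \ref{rem:changing_signature_p=q}, which descends to the sign-reversing involution on $\h_\tau^n$ recorded after Proposition \ref{prop:diffeo_hyperbolic_incidence}. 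Apart from this, the only remaining work is the (routine but somewhat lengthy) computation with the splitting $\R^{n+1,n+1}=\mathcal H\oplus\R\zeta\oplus\R\rho$ that pins down the explicit formulas in (ii) and (iii).
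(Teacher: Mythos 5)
Your proposal is correct and follows the paper's overall strategy: both reduce the statement to Proposition \ref{prop:para_Sasaki_principal_bundle} applied to the principal $\mathbb{R}$-bundle $\pi_+:\mathbb{S}^{n,n+1}\to\h_\tau^n$, with $\phi=\mathrm{pr}_{\mathcal H}\circ\hat\p$. The difference lies in how the data are identified. The paper's proof is terse: it asserts that the properties of $\phi$ follow from its description as the horizontal projection of $\hat\p$, and it identifies the metric abstractly, arguing that the restriction of $b$ is the constant positive curvature metric induced by the Killing form on the homogeneous space, hence unique up to isometry, so it must agree with the metric produced by Proposition \ref{prop:para_Sasaki_principal_bundle} (the sign discrepancy being delegated to a convention in the cited reference). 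You instead verify the bundle data explicitly (the infinitesimal generator of the $\Lambda_t$-action is exactly $\zeta$, the form $\eta$ is $\mathbb{R}$-invariant, and $\mathrm d\eta=j^*\hat\ome$ is basic and descends to $\ome$) and match $g=\pi_+^{*}\g\pm\eta\otimes\eta$ with $b|_{T\mathbb{S}^{n,n+1}}$ directly via the $b$-orthogonal splitting $\R^{n+1,n+1}=\mathcal H\oplus\R\zeta\oplus\R\rho$ and the identities $b(w,\zeta)=-\eta(w)$, $b(w,\rho)=\eta(\hat\p w)$, which are indeed correct. Your flagged ``causal bookkeeping'' is precisely the point the paper outsources to the citation: since $b(\zeta,\zeta)=-1$ on $\mathbb{S}^{n,n+1}$, Definition \ref{def:para_metric_contact} only holds up to the sign convention, whereas on $\mathbb{H}^{n+1,n}$ one has $b(\zeta,\zeta)=+1$ and Proposition \ref{prop:para_Sasaki_principal_bundle} applies verbatim; so your suggested detour through $\mathbb{H}^{n+1,n}$ and the anti-isometry of Remark \ref{rem:changing_signature_p=q} is a legitimate, arguably cleaner, resolution than the paper's. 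One small remark your identities actually expose: carrying out the projection gives $\mathrm{pr}_{\mathcal H}(\hat\p Y)=\hat\p Y-\eta(\hat\p Y)\zeta-\eta(Y)\hat\p(\zeta)$, i.e.\ the two coefficients in the displayed formula of item (ii) of the statement appear interchanged (a typo in the paper: as written, the formula applied to $Y=\zeta$ would give $\rho-\zeta\neq 0$); this does not affect your argument, since the defining equality $\phi=\mathrm{pr}_{\mathcal H}\circ\hat\p$ is what you use.
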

\begin{proof}
The fact that $\phi$ satisfies the properties required by Definition \ref{def:para_Sasaki_metric} follows easily from the observation that, for a vector field $Y \in \Gamma(T\mathbb{S}^{n,n+1})$, it is defined as the projection onto the horizontal distribution $\mathcal{H} \subset T\mathbb{S}^{n,n+1}$ of the vector field $\p(Y)$. Moreover, the pseudo-Riemannian metric on $\mathbb{S}^{n,n+1}$, obtained as the restriction of the non-degenerate bilinear form on $\mathbb{R}^{n+1,n+1}$, has constant positive sectional curvature and is unique up to isometries, being also induced by the Killing form on the Lie algebra of $\mathrm{SO}_0(n+1,n+1)$ when viewing the pseudosphere as a homogeneous space. It must therefore coincide with the metric arising from the construction of Proposition \ref{prop:para_Sasaki_principal_bundle} (see \cite[Proposition 3.2 and Example 4.4]{loiudice2025sasakian} where the difference in the sign depends on the convention).

\end{proof}

\bibliographystyle{alpha}
\bibliography{biblio}

\end{document}